\tikzset{every path/.append style={line cap=round, line width=1pt}}
\def\DR@style{#1}, style=         
\newcommand{\dotrows}[2][]{%
	\pgfkeys{/dotrows, #1}%
	\def\DR@rowsepFinal{\ifx\DR@rowsep\@empty \DR@colsep\else \DR@rowsep\fi}%
	\begin{scope}
		\foreach[count=\i] \N in {#2}{%
			\foreach \j in {1,...,\N}{%
				\node[circle, inner sep=0pt, minimum size=\DR@dotsize, fill,
				style/.expanded=\DR@style]
				(\DR@prefix-\i-\j)
				at ({(\j-1)*\DR@colsep},{-(\i-1)*\DR@rowsepFinal}) {};
			}%
		}%
	\end{scope}%
}
\newcommand{\dotcoord}[1]{\expandafter\dotcoordpar@split#1,\@nil}
\def\dotcoordpar@split#1,#2,\@nil{p-#1-#2.center}
\newcommand{\dotcoordpref}[2]{\expandafter\dotcoordprefpar@split#1,\@nil{#2}}
\def\dotcoordprefpar@split#1,#2,\@nil#3{#3-#1-#2.center}
\numberwithin{equation}{section}
\declaretheorem[numberwithin=section]{theorem}
\declaretheorem[sibling=theorem]{proposition}
\declaretheorem[sibling=theorem]{lemma}
\declaretheorem[sibling=theorem]{corollary}
\theoremstyle{definition}
\declaretheorem[sibling=theorem]{example}
\declaretheorem[sibling=theorem]{definition}
\declaretheorem[sibling=theorem]{conditions}
\theoremstyle{remark}
\declaretheorem[sibling=theorem]{remark}
\def\cH{\mathcal{H}}
\def\bR{\mathbb{R}}
\def\Rd{\bR^d}
\def\D{\mathrm{D}}
\def\E{\mathbb{E}}
\def\I{\EuScript{I}}
\def\A{\EuScript{A}}
\def\M{\EuScript{M}}
\def\K{\EuScript{K}}
\def\P{\mathcal{P}}
\def\calD{\mathcal{D}}
\def\calH{\mathcal{H}}
\def\ka{\kappa}
\def\dia{\diamond}
\newcommand{\bigboxdot}{\DOTSB\mathop{\mathpalette\bigboxdot@{}}\displaylimits}
\newcommand{\bigboxdot@}[2]{%
\vcenter{\hbox{%
		\sbox\z@{$#1\bigotimes$}%
		\resizebox{!}{\ht\z@}{$#1\boxdot$}%
}}%
}
\DeclareFontFamily{U}{bigshuffle}{}
\DeclareFontShape{U}{bigshuffle}{m}{n}{
<5-8> s*[1.6] shuffle7
<8->  s*[1.4] shuffle10
}{}
\DeclareSymbolFont{BigShuffle}{U}{bigshuffle}{m}{n}
\DeclareMathSymbol\bigshuffle{\mathop}{BigShuffle}{"001}
\DeclareMathSymbol\bigcshuffle{\mathop}{BigShuffle}{"002}
\def\dif{\mathrm{d}}
\setmathfontface\altgrfont{GFS Artemisia Italic}[Scale=MatchLowercase]
\DeclareSymbolFont{altgr}{OML}{antt}{m}{it}
\DeclareMathSymbol{\sko }{\mathord}{altgr}{"0E}
\title{\textsc{Wick integrals}}
\author{
	Carlo Bellingeri\thanks{IRIMAS, Université de Haute-Alsace, France. Email: \href{mailto:carlo.bellingeri@uha.fr}{carlo.bellingeri@uha.fr}}
	\and
	Emilio Ferrucci\thanks{SISSA, Trieste, Italy. Email: \href{mailto:emilio.ferrucci@sissa.it}{emilio.ferrucci@sissa.it}}
}
\date{July 27, 2026}
\begin{document}
\maketitle

\begin{abstract}
	\noindent We introduce the \emph{Wick integral} 
	\[
	\int_s^t f(X_u) \dia \dif X_u
	\]
	for a class of stochastic processes $X$ of bounded $2 > p$-variation which are not necessarily Gaussian. The integral is defined for a class of entire functions $f$ depending on the process. In the case of $1/2 < H$-fractional Brownian motion, the Wick integral agrees with the divergence operator in Malliavin calculus. It satisfies a correction formula with the Young integral $\int \hspace{-0.15em} f(X)\dif X$ and an It\^o formula which have infinitely many correction terms, given by integration against the cumulant functions of $X$, and reduce to familiar identities in the Gaussian case. These results are obtained by developing diagram formulae for Appell polynomials w.r.t.\ a linear span of reference random variables and extending them to series via absolute convergence in $L^1$. Our theory applies to processes taking values in the second Wiener chaos, such as the Rosenblatt process.
\end{abstract}

\noindent MSC2020: 60H05, 60G22, 60C05, 30D10\\
Keywords: Wick product, Appell polynomials, entire function, stochastic integral, It\^o formula, Rosenblatt process


\section*{Introduction}	
\addcontentsline{toc}{section}{Introduction}

In \cite{Appell_1880} Appell introduced sequences of polynomials that are centred under given a measure and behave like ordinary monomials under differentiation. This idea was later reintroduced by Wick \cite{wick}, whose notation is still used in quantum field theory:
\[
\partial_i : \hspace{-0.2em} X_1^{k_1} \cdots X_d^{k_d}  \hspace{-0.2em} : \ = \ k_i :  \hspace{-0.2em} X_1^{k_1} \cdots X_i^{k_i - 1} \cdots X_d^{k_d}  \hspace{-0.2em} :, \qquad \mathbb E[:  \hspace{-0.2em}X_1^{k_1} \cdots X_d^{k_d} \hspace{-0.2em} :] = 0.
\]
We refer to \cite{simon} for a comprehensive treatment. These polynomials---we shall refer to them as Appell polynomials, while attributing the product to Wick---entered the contemporary literature in probability theory in the '80s thanks to their relevance to limit theorems \cite{Avram_1987}. For Gaussian distributions they are much better known as the Hermite polynomials, and are orthogonal. Wick products in the more general sense continue to generate interest in the context of the renormalisation, see for example the use in \cite{HaiPhi43} for non-Gaussian approximations to space-time white noise.

The Wick product, which generates a sequence of Appell polynomials starting from the probability measure, is not an associative operation, as the probability law of the Appell polynomials is not the same as that of its factors. One can however impose associativity of the Wick product w.r.t.\ a set of reference random variables, and obtain an associative product. This is, again, mainly considered when the reference measure is Gaussian \cite{Janson_1997}, in which case it mirrors (through the Wiener chaos isometry) the symmetric product on Fock space. This product is closely related to the divergence operator in Malliavin calculus, which can, in many cases, be approximated with Riemann-Stieltjes sums in which integrand and integrator are Wick-multiplied \cite{alos, NT06}. The idea of making Wick products associative works more abstractly for general reference measures admitting finite moments, and was recently introduced by \cite{niko} in the context of Hopf-algebraic renormalisation. Following \cite{Janson_1997}, we continue to call it the Wick product.

The aim of this work is to put the associative Wick product to use in the construction of a stochastic integration theory for processes that are not required to be Gaussian. Our contributions are as follows. In \autoref{sec:product}, after introducing some groundwork to the Wick product, we develop diagram formulae to compute the product of Appell polynomials and the Wick product of ordinary monomials, extending the moment and cumulant formulae in \cite{Giraitis_1986}. A \say{change of chaos} formula is developed for Appell polynomials of Appell polynomials, making precise the aforementioned failure in associativity. In \autoref{sec:series} we extend some of these results to entire functions, requiring the cumulant-generating function to have a positive radius of analyticity at the origin, and the function to have exponential type bounded by this radius. We show how classical identities from Malliavin calculus, such as Stroock's formula, carry over to our case, and by passing to the expectation in one such identity we recover an integration-by-parts relation of Barbour \cite{barbourAsymp} under more general hypotheses on the function. The material in this section rests on the theory of entire functions of exponential type \cite{boasEntire, BB}, and their series expansions in Appell polynomials: we have collected some auxiliary material (which did not seem to be immediately available for several  variables, but is nevertheless a straightforward extension of the single variable case) in \autoref{app:proof}. In \autoref{sec:ints} we define the \emph{Wick integral} \autoref{def:wickint} of a one-form (satisfying the conditions of the previous section) against a multidimensional stochastic process $X$ with sample paths of bounded $2 > p$-variation and with a sufficiently well-behaved cumulant-generating function. We do this in terms of Riemann-Stieltjes-Wick sums, similarly to \cite{alos, NT06}, but for laws that need not be Gaussian. In \autoref{thm:itostrat} we prove that such sums converge to the sum of a Young integral and a series of Stieltjes integrals against the cumulant functions of the process. We deduce a corresponding change-of-variables formula \autoref{thm:ito}, which in the scalar case reads
\begingroup
\newcommand{\AboveBraceGap}{6pt}
\newcommand{\BelowBraceGap}{5pt}
\makeatletter
\def\overbrace#1{\mathop{\vbox{\m@th\ialign{##\crcr\noalign{\kern1\p@}
	\downbracefill\crcr\noalign{\kern\AboveBraceGap\nointerlineskip}
	$\hfil\displaystyle{#1}\hfil$\crcr}}}\limits}
\def\underbrace#1{\mathop{\vtop{\m@th\ialign{##\crcr
	$\hfil\displaystyle{#1}\hfil$\crcr\noalign{\kern\BelowBraceGap\nointerlineskip}
	\upbracefill\crcr\noalign{\kern1\p@}}}}\limits}
\makeatother
\[
\newcommand{\Aa}{\int_s^t f'(X_u) \dia \dif X_u}
\newcommand{\Bb}{\frac 12 \int_s^t f''(X_u) \kappa_2(\dif u)}
\newcommand{\Cc}{\sum_{n = 3}^\infty \frac{1}{n!}
	\int_s^t f^{(n)}(X_u) \kappa_n(\dif u)}
f(X_t) - f(X_s)
=
\underbrace{
	\overbrace{\Aa}^{\text{centred stochastic integral}}
	+
	\mathrlap{
		\overbrace{\hphantom{\Bb + \Cc}\vphantom{\Aa}}^{\text{locally deterministic corrections to centering}}
	}
	\Bb\vphantom{\Cc}
}_{\text{Gaussian case}}
+
\underbrace{\Cc}_{\text{non-Gaussian terms}}.
\]
\endgroup
For Gaussian processes, restricting to our class of functions of $X_t$, we recover It\^o-Stratonovich and It\^o formulae in the literature \cite{D99, DHP00, alos, bender, biagini} (and we note that their formulae hold for much more general integrands than the ones we consider here, thanks to orthogonality of Wiener chaos and the very well developed frameworks of Malliavin calculus and white noise analysis). Such formulae have been substantially extended, still in the Gaussian case, but in varying regularity regimes and for different types of integrands, e.g.\ \cite{HJT13, CL19, Ben20} among many authors. For non-Gaussian measures our formulae have arbitrarily many correction terms, an expression of the famous characterisation of Gaussian laws as the only ones with finitely many non-zero cumulants \cite{marcin}. In \autoref{sec:wiener}, we specialize our general theory to random variables in finite Wiener chaos, with particular emphasis on the second chaos, the only chaos beyond the zeroth and first whose random variables have moment-generating function which is analytic in $0$. Utilising the underlying Gaussian structure, we obtain explicit diagrammatic and operator representations for cumulants and for changes of Wick product (from the ordinary Gaussian to the intrinsic one), and verify the conditions that guarantee Appell expansions. In \autoref{sec:2chaos_processes}, we provide sufficient conditions ensuring that second-chaos processes satisfy the conditions necessary for Wick integration, and derive the resulting form of the Wick integral and It\^o-type formulae. As a main example, we consider the Rosenblatt process $X^H$ \cite{Taqqu1975}, for which the required assumptions can be checked explicitly and the correction terms take a particularly simple form due to the stationarity and self-similarity of the process. We then compare the Wick integral for the Rosenblatt process with the white-noise approach of \cite{Arras15}: while the latter leverages the underlying Gaussian noise and the machinery of Hida distributions \cite{Kuo96}, it also introduces auxiliary objects that are not intrinsic to $X^H$ itself. By contrast, our construction is formulated directly in terms of the law of the process, and our change-of-chaos identity allows us to recover Arras's It\^o formula for polynomials.

\subsection*{Comparison with other previous approaches, and outlook}

In addition to \cite{Arras15} (which we consider to be the work that is most relevant to our approach), there is a rich literature on Malliavin calculus for processes that are not Gaussian. These usually work by specifying the measure in advance and considering the analogue of Wiener chaos given by orthogonal polynomials, see for example \cite[Ch.\ 10, 11]{NuaNua}. These approaches have the benefits of orthogonality---the fact that arbitrary $L^2$ functions can be expanded---but must give up many nice identities true in the Gaussian case, like compatibility of the derivative with the chaos polynomials. Unlike these approaches, the one considered here is, for the time being, essentially only finite dimensional and makes strong requirements of the functions being considered, but it is arguably more faithful to the Gaussian case from the algebraic point of view.

Another line of work originates in \cite{ADKS96}. Here the measure is required to have a certain log-derivative in $L^1$, which gives rise to an integration-by-parts relation. These rather restrictive conditions (which would rule out most examples considered here) are loosened in \cite{KSWY98} to ones which enable the treatment of many interesting measures, such as the Poisson law, but the theory for the divergence operator is not as well developed. These approaches are based on considering a biorthogonal system, with Appell polynomials $P$ on the \say{derivative} side and a different $Q$ system (which in general is distributional) on the adjoint \say{divergence} side. In other words, the aspect of Malliavin calculus that is extended here is the adjoint property of derivative and divergence, while integrating $P_n$ does not yield $P_{n+1}$. This is different from our approach, in which derivative and Wick integration are not considered as being adjoint but behave well w.r.t.\ the same system of Appell polynomials, as in the Gaussian case (for which both properties hold, since $P = Q$). In the future, we believe it would be interesting to conduct a more thorough comparison between the two approaches.

We deem several extensions of our work to be worthy of consideration. On the one hand, one could make the theory work for infinite-dimensional factors, allowing for path-dependent and even anticipating integrands, which is the norm in Malliavin calculus. We also believe a rough extension of our theory is within reach; this would cover the most important case of Brownian motion, and more generally $\frac 12 \geq H$-fBm (with $H > \frac 14$ in the multidimensional case). On the other hand, the rather strong restriction to entire functions of exponential type seems to be the best achievable with the methods considered; it is nevertheless conceivable that resummation techniques (e.g.\ Borel summation) could be employed to extend this aspect too.

\paragraph{Acknowledgements}

We are very grateful to Nikolas Tapia for introducing us to his work on Wick products and for helpful discussions on several topics related to those of this paper.

During the first phase of this project, CB was employed at the Université de Lorraine and supported by the ERC Starting Grant \emph{Low Regularity Dynamics via Decorated Trees (LoRDeT)}, and EF was employed at the University of Oxford and supported through the EPSRC programme grant [EP/S026347/1]. EF's research is currently supported through the ERC grant SQGT [101116964].

\section{The Wick product, diagram formulae, and change of chaos}\label{sec:product}

We consider a collection of real-valued random variables $\mathcal{X}=(X^i,i \in \mathcal I)$ defined on a probability space $(\Omega, \mathcal{F}, \mathbb{P})$ and indexed by a set $\mathcal I$, with $\mathcal F = \sigma(X^i \colon i \in \mathcal I)$. Let $x^i, i \in \mathcal I$ be indeterminates indexed by the same set; we will always distinguish random variables from indeterminates by using upper case letters for the former and lower case for the latter. Denote
\[
x^I \coloneqq x^{i_1} \cdots x^{i_n}, \qquad X^I \coloneqq X^{i_1} \cdots X^{i_n}
\]
for $I= \{i_1,\ldots,i_n\}$ a finite submultiset of  $\mathcal I$ (a multiset---repeated entries are allowed and order does not matter---with $i_k \in \mathcal I$ for $k = 1,\ldots, n$, even though $\mathcal I$ itself is a set). We assume finiteness of all mixed moments, $\E[|X^I|] < \infty$ for all such $I$, so that $X^I \in L^p(\Omega)$ for any $p$, while $x^I$ is an element of the polynomial algebra $\mathbb R[x^\mathcal{I}]$.

We use $\kappa[X^I] \coloneqq \ka[X^{i_1},\ldots,X^{i_n}]$ to denote joint cumulants, and recall the Leonov-Shiraev relations: denoting by $\P(I)$ the set of partitions of any generic multiset $I$, and using the notation
\[
\E[X]^\pi = \prod_{J \in \pi} \E[X^J],\qquad \ka[X]^\pi = \prod_{J \in \pi} \ka[X^J]
\]
for any $\pi \in \P(I)$, we have the identities
\begin{equation}
\begin{split}
\E[X^I] &= \sum_{\pi \in \P(I)} \ka[X]^\pi\\
\ka[X^I] &= \sum_{\pi \in \P(I)} (|\pi| - 1)!(-1)^{|\pi| - 1} \E[X]^\pi\text{ .}
\end{split}
\end{equation}
We shall adopt the conventions $\E[X]^\varnothing = 1 = \kappa[X]^\varnothing$. Note that while $\mathbb E[X^{I_1}\cdots X^{I_m}] = \mathbb E[X^I]$ with $I = I_1 \sqcup \ldots \sqcup I_m$, in general $\kappa[X^{I_1},\ldots, X^{I_m}] \neq \kappa[X^I]$.

Partial differentiation acts on monomials by
\begin{equation}\label{eq:multinomial}
\partial_J x^I = \sum_{J \subset I}x^{I \setminus J}, 
\end{equation}
where we are summing over distinct multiset embeddings $J \subset I$. Appell polynomials \cite{Appell_1880,wick} are polynomials, each associated to a multiset, that continue to satisfy this property while being centred. We denote these $x^{\dia I} \in \mathbb R[x^\mathcal{I}]$ and
\begin{equation}\label{eq:wickX}
	X^{\dia I} = x^{\dia I}|_{x^i = X^i, \ i \in I} \in L^2(\Omega) \text{ .}
\end{equation}
In the following proposition we recall their definition in four equivalent ways. We refer to \cite[Sections 1-3]{MR3535310} for the proofs of the following results.

\begin{proposition}[Appell polynomials associated to a measure]\label{prop:wick}
The following are equivalent definitions of the \emph{Appell polynomials} $x^{\dia I}$, defined for $I \subset \mathcal I$ finite.
\begin{enumerate}
\item (Recursive definition)
\[
x^{\diamond I} = x^I- \sum_{\varnothing\neq J\subset I}\mathbb{E}[X^J]x^{\diamond I\setminus J},\quad x^{\dia \varnothing} = 1 \text{ ,}
\]
from which the inverse expression follows:
\begin{equation}\label{eq:inverse}
x^I = \sum_{J \subset I} x^{\dia J} \sum_{\pi \in \P(I \setminus J)} \ka[X]^\pi \text{ .}
\end{equation}
\item (Zero-mean Appell sequence.) For any $j \in \mathcal I$, $\partial_j x^{\dia I} = 0$ if $j \not \in I$ and otherwise
\begin{equation}\label{eq_zero_average}
\partial_j x^{\dia I} = \sum_{j \in I}x^{\dia I \setminus j}, \quad \E X^{\dia I} = 0 \quad \text{for } I \neq \varnothing, \quad x^{\dia \varnothing} = 1 \text{ .}
\end{equation}
	
\item (Closed-form expression.) 
\begin{equation}\label{eq:wick}
x^{\diamond I} = \sum_{J \subset I} x^J \sum_{\pi \in \P(I \setminus J)} (-1)^{|\pi|} \ka[X]^\pi, \quad x^{\dia \varnothing} = 1 \text{ .}
\end{equation}
\item (Generating series.) If the moment-generating function of finite collections of random variables indexed by $\mathcal I$ is analytic in a neighbourhood of $0$,
\[
x^{\dia I} = \partial_{\theta_I} \exp \Big(\sum_{i} \theta_i x^i - \EuScript{K}(\theta)\Big)\Big|_{\theta = 0} \text{ ,}
\]
where $\EuScript{K}(\theta) \coloneqq \log \E \exp(\sum_i \theta_i X^i)$ is the cumulant generating function, with the sum ranging finitely over distinct indices in $I$.
\end{enumerate}
\end{proposition}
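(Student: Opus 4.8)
The plan is to treat the recursive definition~(1) as the anchor: it manifestly determines $x^{\dia I}$ for every finite $I \subset \mathcal I$ by strong induction on $|I|$, its right-hand side involving only $x^{\dia K}$ with $|K| < |I|$. I then show that (2), (3) and (4) yield the same polynomials. Throughout, every sum $\sum_{J \subset I}$ over sub-multisets is read with multiplicity, each distinct sub-multiset $J$ contributing $\binom IJ$ times, exactly as in~\eqref{eq:multinomial}; this convention is the only subtle point, and it is what produces the binomial factors in the classical one-variable Appell recursion. It is convenient to rewrite~(1), absorbing the $J = \varnothing$ term via $\E[X^\varnothing] = 1$, as
\[
x^I = \sum_{J \subset I}\E[X^J]\,x^{\dia I\setminus J} = \sum_{K \subset I}\E[X^{I\setminus K}]\,x^{\dia K} \text{ ;}
\]
inserting the Leonov--Shiryaev expansion $\E[X^{I\setminus K}] = \sum_{\pi\in\P(I\setminus K)}\ka[X]^\pi$ then yields~\eqref{eq:inverse} at once.

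\emph{Equivalence of (1) and (2).} Substituting $x = X$ in~(1) and applying $\E$, an induction on $|I|$ gives $\E X^{\dia I} = \E[X^I] - \sum_{\varnothing\neq J\subset I}\E[X^J]\,\E X^{\dia I\setminus J} = \E[X^I] - \E[X^I] = 0$ for $I \neq \varnothing$, since every $\E X^{\dia I\setminus J}$ with $\varnothing\neq J\subsetneq I$ already vanishes. For the lowering relation it suffices to differentiate in a single index $k$, the general $\partial_J$ following by iteration and~\eqref{eq:multinomial}; differentiating the recursion, invoking~\eqref{eq:multinomial} and the inductive hypothesis, and a routine reindexing of the resulting sum (the multiplicity $\binom{I\setminus J}{\{k\}}$ supplied by the inductive hypothesis recombining with the outer sum to produce the prefactor $\binom{I}{\{k\}}$) gives $\binom I{\{k\}}\,x^{\dia I\setminus k}$. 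Conversely, (2) is itself a definition: the relations $\partial_k x^{\dia I} = \binom I{\{k\}}x^{\dia I\setminus k}$ over the indices $k$ occurring in $I$ determine $x^{\dia I}$ up to an additive constant, which $\E X^{\dia I} = 0$ fixes, so the family satisfying (2) is unique and hence coincides with that of~(1).

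\emph{The closed form (3).} Write $y^I$ for the right-hand side of~\eqref{eq:wick}. By the uniqueness above it suffices to verify that $y$ satisfies the rewritten recursion, $\sum_{J\subset I}\E[X^J]\,y^{I\setminus J} = x^I$. Substituting~\eqref{eq:wick} for $y^{I\setminus J}$ and $\E[X^J] = \sum_{\sigma\in\P(J)}\ka[X]^\sigma$, and reorganising the summation so as to fix the monomial $x^K$ first, the resulting coefficient of $x^K$ is
\[
\sum_{J \sqcup L = I\setminus K}\Big(\sum_{\sigma\in\P(J)}\ka[X]^\sigma\Big)\Big(\sum_{\pi\in\P(L)}(-1)^{|\pi|}\ka[X]^\pi\Big) \text{ ,}
\]
which equals $1$ if $K = I$ and $0$ otherwise, so $\sum_{J\subset I}\E[X^J]\,y^{I\setminus J} = x^I$ as required. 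This sign-cancellation identity, valid for every finite multiset, is precisely the statement that the generating series $\sum_M\frac{\theta^M}{M!}\E[X^M] = e^{\EuScript K(\theta)}$ and $\sum_M\frac{\theta^M}{M!}\sum_{\pi\in\P(M)}(-1)^{|\pi|}\ka[X]^\pi = e^{-\EuScript K(\theta)}$ are mutually inverse; it can also be proved directly by a sign-reversing involution on the block containing a fixed element of the multiset.

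\emph{The generating series (4), and the main difficulty.} Factor $\exp\big(\sum_i\theta_i x_i - \EuScript K(\theta)\big) = \exp\big(\sum_i\theta_i x_i\big)\exp\big(-\EuScript K(\theta)\big)$; the first factor is $\sum_K\frac{\theta^K}{K!}x^K$, and since the Taylor coefficients of $\EuScript K$ are the joint cumulants, $\EuScript K(\theta) = \sum_{L\neq\varnothing}\frac{\theta^L}{L!}\ka[X^L]$, the exponential formula gives $\exp(-\EuScript K(\theta)) = \sum_M\frac{\theta^M}{M!}\sum_{\pi\in\P(M)}(-1)^{|\pi|}\ka[X]^\pi$. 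Forming the Cauchy product and applying $\partial_{\theta_I}|_{\theta=0}$ --- which just reads off the coefficient of $\theta^I/I!$, the infinite sum over $i$ being harmless since only the $\theta_i$ with $i$ in the support of $I$ survive --- returns exactly the right-hand side of~\eqref{eq:wick}, so (4) agrees with (3). None of this is deep: the sole substantive ingredient is the sign-cancellation identity above (equivalently, the M\"obius function of the partition lattice, or simply $e^{\EuScript K}e^{-\EuScript K} = 1$), and everything else is induction on $|I|$. The genuine nuisance is the combinatorial bookkeeping --- fixing the conventions for $\sum_{J\subset I}$, $\binom IJ$ and partitions of a multiset with repeated entries, and ensuring that every factorial and multiplicity matches up in the Cauchy product ($I!/(K!\,(I\setminus K)!) = \binom IK$) and in the exponential formula.
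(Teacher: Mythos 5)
The paper does not actually prove this proposition --- it defers entirely to \cite[Sections 1--3]{MR3535310} --- so there is no internal proof to compare against; judged on its own terms, your argument is correct and is essentially the standard chain of implications one finds in that reference and the classical Appell/Wick literature. Anchoring on the recursion (1), getting $\E X^{\dia I}=0$ and the lowering relation by induction, noting that (2) conversely determines the family uniquely (derivative data plus vanishing mean pin down each $x^{\dia I}$ up to and including its constant term, by induction on $|I|$), verifying that the closed form (3) satisfies the recursion via the convolution identity $\sum_{J\sqcup L=M}\bigl(\sum_{\sigma\in\P(J)}\ka[X]^\sigma\bigr)\bigl(\sum_{\pi\in\P(L)}(-1)^{|\pi|}\ka[X]^\pi\bigr)=\delta_{M,\varnothing}$ (equivalently $e^{\EuScript K}e^{-\EuScript K}=1$, or M\"obius inversion on the partition lattice --- your sign-reversing involution on the block containing a fixed element is a valid direct proof), and matching (4) with (3) through the multivariate exponential formula and the Cauchy product is exactly the right skeleton, and each step you sketch does go through. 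Two small points worth tightening: in the uniqueness step you should say explicitly that (2) also gives $\partial_k x^{\dia I}=0$ for $k\notin I$ (since $\binom{I}{\{k\}}=0$), so the derivatives in \emph{all} variables are prescribed and ``up to an additive constant'' is legitimate; and your multiplicity convention for $\sum_{J\subset I}$ and $\binom{I}{J}$ is the right reading of the paper, though note the paper's own \eqref{eq:multinomial} is only literally consistent with iterated differentiation under the injection-count (falling-factorial) reading of $\binom{I}{J}$ --- your proof is insulated from this ambiguity because the unambiguous single-index relation $\partial_k x^{\dia I}=\binom{I}{\{k\}}x^{\dia I\setminus k}$ is all the uniqueness argument needs.
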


Assuming $\mathbb E X = 0$, the first four Appell polynomials in terms of moments are, for $i,j,h,k \in \mathcal I$:
\begin{equation}\label{ex:1234}
\begin{split}
x^{\dia i} ={} &x^i, \qquad x^{\dia ij} = x^{ij} - \E[X^{ij}]\\
x^{\dia ijh} ={} &x^{ijh} - \E[X^{ij}]x^h - \E[X^{ih}]x^j - \E[X^{jh}]x^i - \E[X^{ijh}] \\
x^{\dia ijhk} ={} &x^{ijhk} - \E[X^{ij}]x^{hk} - \E[X^{ih}]x^{jk} - \E[X^{ik}]x^{jh} - \E[X^{jh}]x^{ik} - \E[X^{jk}]x^{ih} - \E[X^{hk}]x^{ij} \\
&- \E[X^{ijh}]x^k - \E[X^{ijk}]x^h - \E[X^{ihk}]x^j - \E[X^{jhk}]x^i\\
&- \kappa[X^{ijhk}] + \E[X^{ij}]\E[X^{hk}] + \E[X^{ih}]\E[X^{jk}] +\E[X^{ik}]\E[X^{jh}] \text{ ,}
\end{split}
\end{equation}
where we have used/may further substitute the expressions for joint cumulants of centred random variables
\begin{align*}
\ka[X^i] &= \E[X^i] = 0, \quad \ka[X^{ij}] = \E[X^{ij}], \quad \ka[X^{ijh}] = \E[X^{ijh}], \\
\ka[X^{ijhk}] &= \E[X^{ijhk}] - \E[X^{ij}]\E[X^{hk}] - \E[X^{ih}]\E[X^{jk}] - \E[X^{ik}]\E[X^{jh}] \text{ .}
\end{align*}

Once the base random variables $X$ are fixed, we consider the \emph{Wick product} defined in \cite[Theorem 5.3]{niko} by
\begin{equation}\label{eq:wickprod}
x^{\diamond I} \diamond x^{\diamond J} \coloneqq x^{\diamond I \sqcup J }\text{ ,}
\end{equation}
and extended to arbitrary polynomials in $X$ thanks to the fact that Appell polynomials are monic \eqref{eq:inverse} and therefore linearly generate the polynomial algebra. This product is the one used in \cite{Janson_1997} (denoted $\odot$) and \cite[Definition 2.4]{the_wick} in the case of $X$ Gaussian; in \cite{niko} it is viewed through the lens of Hopf-algebraic deformation. The operation $\dia$ is a commutative and associative product on $\mathbb R[x^\mathcal{I}]$.

\begin{remark}\label{rem:centred}
	Little generality is lost by assuming $X$ to be centred. This is because, calling $\widetilde X \coloneqq X - \mu$ with $\mu = \mathbb E X$ and corresponding indeterminates $\widetilde x = x - \mu$, we have $\widetilde x^{\dia I} = x^{\dia I}$. Indeed, by multilinearity of cumulants and calling $\mathcal P_{\geq 2}$ the set of partitions of a multiset into multisets of cardinality at least $2$, by the closed-form expression in \autoref{prop:wick}
	\begin{align*}
		\widetilde x^{\dia I} &= \sum_{J \subset I} (x - \mu)^J \sum_{\pi \in \P(I \setminus J)} (-1)^{|\pi|} \ka[X - \mu]^\pi \\
		&= \sum_{J \subset I} \sum_{k = 0}^{|J|} (-1)^k \sum_{\substack{K \subset J \\ |K| = k}} \mathbb E [X]^K x^{J \setminus K} \sum_{\pi \in \P_{\geq 2}(I \setminus J)} (-1)^{|\pi|} \ka[X]^\pi \\
		&= x^{\dia I}
	\end{align*}
	thanks to the same formula, allowing for singletons. Notice that when $X$ is not centred, $p(x) \dia x^j$ is not generally centred (when evaluated at $x = X$) for a polynomial $p$, instead 
	\[
	p(x) \dia x^j = p(x) \dia x^{\dia j} + p(x) \mu^j
	\]
	where $x^{\dia j} =  \widetilde x^j = x^j - \mu^j$ and $p(x) \dia x^{\dia j}$ is centred; instead $\E[p(X)\dia X^j] = \E[p(X)]\mu^j$. For example
	\[
	x \dia x = (x^{\dia 1} + \mu) \dia (x^{\dia 1} + \mu) = x^{\dia 2} + 2\mu x^{\dia 1} + \mu^2,
	\]
	which is not the same as $x^{\dia 2}$, and has mean $\mu^2$.
\end{remark}

\begin{remark}[Lack of associativity]\label{rem:ass}
The polynomials $x^{\dia I}$ and the product $\dia$ are only defined w.r.t.\ the base collection of random variables $X$: we will sometimes emphasise this by writing $\dia_X$. Let $Y^k$ be random variables that are polynomial in $X$, $Y^k = p^k(X)$. Then, unless the $p^k$'s are of degree $1$, there is a distinction between performing the Wick product $\dia_Y$ on polynomials in the indeterminates $y$ and substituting $y^k = p^k(x)$, and performing the Wick product $\dia_X$ on polynomials in $p^k(x)$. For example, letting $y^1 = x^{\dia I}$ and $y^2 = x^{\dia J}$, unless $I$ and $J$ are singletons, generally
\[
y^1 \dia_Y y^2 \big|_{y^1 = x^{\dia I}, y^2 = x^{\dia J}} = x^{\dia I}x^{\dia J} - \mathbb E[X^{\dia I}X^{\dia J}] \neq x^{\dia I \sqcup J} = x^{\dia I} \dia_X x^{\dia J} \text{ ,}
\]
and therefore generally $Y^1 \dia_Y Y^2 \neq Y^1 \dia_X Y^2$, i.e.\ $\dia_X$ and $\dia_Y$ define different products. The full relationship between $\dia_X$ and $\dia_Y$ will be explored in \autoref{thm:change} below. Sometimes the Wick product is simply defined as the Appell polynomial w.r.t.\ its factors, without any reference to a base set of random variables, denoted $: Y^1 \cdots Y^n :$ in the theoretical physics literature. See \cite[\S 2d)]{the_wick} for a discussion about the distinction between \say{the Wick product in physics} and \say{the Wick product in stochastic analysis} in the Gaussian case, the latter being the one of interest here, denoted $\dia$. The exception that proves the rule is when the random variables $Y$ are obtained as a linear combination of the ones $X$, then $\dia_Y = \dia_X$. More precisely, let $Y^j = \sum_i \lambda^j_i X^i$ with the sum finite, then for a multiset $J = j_1 \ldots j_n$, it follows trivially from \autoref{prop:wick} and homogeneity of monomials and cumulants that
\begin{align*}
y^{\dia_Y J}\big|_{y^j = \sum_i \lambda^j_i x^i} &= \Big( \sum_i \lambda^{j_1}_i x^i \Big) \dia_X \cdots \dia_X \Big( \sum_i \lambda^{j_n}_i x^i \Big), \qquad \text{and therefore} \quad
Y^{\dia_Y J} = Y^{\dia_X J} \text{ .}
\end{align*}
For this reason, from now on we will often identify $\mathcal X$ with its linear span.
\end{remark}

\begin{remark}[Well defined product]\label{rem:wd}
We are also interested in the Wick product as a product of polynomial random variables, not abstract polynomials. This is how it is defined in \cite{Janson_1997} in the Gaussian case. For this to be well-defined, however, it must hold that if $p, q \in \mathbb R[x^\mathcal{I}]$ with $p(X) = 0$ a.s.\ then $p(X) \dia q(X) = 0$ a.s. The implication can be guaranteed by requiring that the condition be trivial, i.e.\ that no polynomial vanishes on $X$; this ensures that the polynomial $p$ is identifiable from the law of $p(X)$ and that of $X$. This in turn is implied by requiring that for each finite subset $\{i_1,\ldots,i_d\} \subset \mathcal I$, the law $\mu$ of $(X^{i_1},\ldots,X^{i_d})$ admits a positive density on an open set of $\mathbb R^d$. This condition is also useful for guaranteeing well-definedness in the case in which the factors are entire functions, treated below in \autoref{sec:series}, since a power series that vanishes on an open set must vanish. In fact, it is enough for $\mu$ to have a density on an open set $U \subset \mathbb R^d$ with $\mu[U] > 0$ (without specifying that the density must be positive on $U$), since the zero-set of a non-zero analytic function in several variables must have lebesgue measure zero \cite{Mit20}. In this case the Wick product defines a commutative and associative product on polynomial random variables in $X$
\[
\dia \colon \mathbb R[X^\mathcal{I}] \odot \mathbb R[X^\mathcal{I}] \to \mathbb R[X^\mathcal{I}] \text{ .}
\]
We now give a simple example of how $\dia$ can fail to be well defined as a product on random variables.
\end{remark}

\begin{example}[Lack of well-definedness]\label{ex:wd}
	This example discusses a specific instance of the issues described in \autoref{rem:ass} and \autoref{rem:wd}. Let $(X^1,X^2) \sim \mathcal N(0, I_2)$. Let 
	\[
	Y^1 = (X^1)^2 - 1, \quad Y^2 = (X^2)^2 - 1,\quad Y^3 = Y^4 = X^1X^2 \text{ .}
	\]
	Now consider the polynomial
	\[
	p(y^1, y^2, y^3) = (y^3)^2 - (y^1 + 1)(y^2 + 1) \text{ .}
	\]
	We will show that, even though $p((x^1)^2 - 1, (x^2)^2 - 1, x^1x^2) = 0$,
	\[
	p(y^1, y^2, y^3) \dia_Y y^4\big|_{y^1 = (x^1)^2 - 1, y^2 = (x^2)^2 - 1, y^3 = y^4 = x^1x^2} \neq 0.
	\]
	This example shows that substitution of variables does not commute with Wick products. More importantly, it shows that the Wick product is not well defined as a product of random variables unless these are free of polynomial relations: indeed, since $p(Y^1, Y^2, Y^3) \dia_Y Y^4$ is now a non-trivial polynomial in a bivariate Gaussian, which is free of polynomial relations, it does not vanish a.s.\ despite $p(Y^1, Y^2, Y^3)$ vanishing identically. We compute
	\begin{align*}
		&p(y^1, y^2, y^3) \dia_Y y^4\\
		={}&(y^3)^2 \dia y^4 - y^1y^2 \dia y^4 - y^1 \dia y^4 - y^2 \dia y^4 - y^4 \\
		={}&(y^3)^2 y^4 - 2 \kappa[Y^3, Y^4]y^3 - \kappa[Y^3, Y^3, Y^4] - y^1y^2y^4 + \kappa[Y^1,Y^4]y^2 + \kappa[Y^2,Y^4]y^1 + \kappa[Y^1, Y^2, Y^4] \\
		&-y^1y^4 + \kappa[Y^1,Y^4] - y^2y^4 + \kappa[Y^2, Y^4] - y^4\\
		=&-2y^3 \text{ ,}
	\end{align*}
	where the last identity is obtained by computing the cumulants with \autoref{prop:EkW} (using the fact that the $Y^k$'s are Hermite polynomials---indeed, the whole calculation can alternatively been performed by writing $p(y^1, y^2, y^3)$ in the $\dia_Y$-basis and applying \autoref{thm:change}, with the same result), and simplifying. 
\end{example}

By far the best known example of Appell polynomials is the case in which the random variables $X^i$ are Gaussian, see \cite{Janson_1997}, in which case the Appell polynomials are the Hermite polynomials. In this case, and only in this case, the Appell polynomials block-orthogonal between degrees, leading to the very useful Wiener chaos decomposition. In general, however, the orthogonal polynomials and the Appell polynomials associated to a measure are different.

\begin{example}[Poisson]\label{ex:pois}
Take $\mathcal X$ to contain a single random variable $X \sim \mathrm{Pois}(\lambda)$. Note that this is a discrete random variable and thus does not satisfy the requirement of \autoref{rem:wd}; the Appell polynomials are nevertheless still perfectly well defined. We have $\kappa_n[X] = \lambda$ for all $n \geq 1$ (in particular $X$ is not centred) and therefore, denoting $\mathcal P_m$ the set of partitions of the set with $m$ elements,
\begin{align*}
x^{\dia n} = \sum_{k = 0}^n {n \choose k} x^k \sum_{\pi \in \P_{n - k}} (-1)^{|\pi|} \ka[X]^\pi = \sum_{k = 0}^n {n \choose k} x^k \sum_{h = 0}^{n-k} (-\lambda)^{h}\Big\{ {n-k \atop h} \Big\} = \sum_{k = 0}^n {n \choose k} T_{n-k}(-\lambda)  x^k \text{ .}
\end{align*}
We have denoted $\{ {m \atop h} \}$ the Stirling numbers of the second kind, i.e.\ the number of ways of partitioning a set with $m$ elements into $h$ non-empty subsets (so $\{ {m \atop 0} \} = 0$ for $m > 0$ and $\{ {0 \atop 0} \} = 1$), and $T_m(x) = \sum_{h = 1}^m \{ {m \atop h} \}x^h$, $T_0 = 1$ the Touchard polynomials \cite[\S 2.4]{PT}. We compute the first few:
\begin{align*}
x^{\dia 0} &= 1\\
x^{\dia 1} &= x - \lambda  \\
x^{\dia 2} &= x^2 - 2\lambda x + \lambda^2 - \lambda \\
x^{\dia 3} &= x^3 - 3\lambda x^2 + 3(\lambda^2 - \lambda)x - \lambda^3 + 3\lambda^2 - \lambda \text{ .}
\end{align*}
Compare these with the Charlier polynomials, the sequence of orthogonal polynomials, normalised to be monic:
\begin{align*}
c^0_\lambda(x) &= 1 \\
c^1_\lambda(x) &= x - \lambda \\
c^2_\lambda(x) &= x^2 - (2\lambda +1) x + \lambda^2 \\
c^3_\lambda(x) &= x^3 - (3\lambda + 3) x^2 + (3\lambda^2  +3\lambda + 2) x - \lambda^3 \text{ .}
\end{align*}
We note that $c^n_\lambda(x) \neq x^{\dia n}$ for $n \geq 2$; this can also be seen from their generating functions, from which it can be seen that Charlier polynomials are a Sheffer sequence that is not Appell. This is just another manifestation of the fact that Appell polynomials are generally not orthogonal, while orthogonal polynomials are not generally preserved under the usual differentiation rule. See \cite{Anshelevich_2004} for a more complete comparison between Appell, orthogonal and Kailath-Segall polynomials. It is usually Charlier polynomials that are used in analogues for Malliavin calculus for jumps, e.g.\ \cite[Ch.\ 10]{NuaNua}, in which however the derivative operator does not correspond to usual differentiation and does not satisfy properties such as being a local operator; in \cite[Ch.\ 11]{NuaNua} the authors instead take the derivative operator to be the usual one, which enables the study of densities, but without dualising it. Appell polynomials instead preserve differentiation, at the price of lack orthogonality.
\end{example}

We now introduce diagram notation for dealing with identities involving Appell polynomials. This notation is a slight extension of that in \cite{Giraitis_1986}, and similar notation has been used prominently in neighbouring contexts, e.g.\ for Gaussian Wick products \cite{Janson_1997} or chaos expansions \cite{PT}. A \emph{partial diagram}, or simply \emph{diagram}, is a row of nodes, each representing an index $i \in \mathcal I$, together with a partition of a subset of nodes, which is denoted by connecting nodes that lie in the same set of the partition with an \emph{edge}. A diagram is called \emph{total} if all of its nodes lie on an edge---these are the diagrams already considered in \cite{Giraitis_1986}. Given finite multisets $I_1,\ldots, I_m \subset \mathcal I$ indexing the rows of the diagram, $I \coloneqq \bigsqcup_j I_j$, we will write $\mathcal D(I_1,\ldots,I_m)$ to denote the set of all diagrams over those indices, and $D = (\pi, K) \in \mathcal D(I_1,\ldots,I_m)$ where $\pi \in \mathcal P(I \setminus K)$ are the edges; we will also denote $\mathcal D$ the set of all diagrams without reference to a specific indexing. We will similarly use $\mathcal P$ for total diagrams, preserving the notation for partitions. We will call the set $K$ in the diagram $(\pi, K) = D$ the \emph{residual set}, and denote $\overline \pi \coloneqq \pi \sqcup \{K\} \in \mathcal P(I)$ the \emph{total partition} of $D$.

\begin{figure}[ht!]
\centering
\begin{tikzpicture}
	\dotrows{2,6,6,3,4}
	\draw (\dotcoord{1,1}) to (\dotcoord{2,2});
	\draw (\dotcoord{2,2}) to (\dotcoord{2,4});
	\draw (\dotcoord{2,4}) to (\dotcoord{3,4});
	\draw (\dotcoord{3,4}) to (\dotcoord{3,2});
	\draw (\dotcoord{3,6}) to (\dotcoord{4,3});
	\draw (\dotcoord{4,3}) to (\dotcoord{4,2});
	\draw (\dotcoord{4,1}) to (\dotcoord{5,1});
\end{tikzpicture}
\hspace{5em}
\begin{tikzpicture}
	\dotrows{6,3,3,3}
	\draw (\dotcoord{1,1}) to (\dotcoord{2,1});
	\draw (\dotcoord{1,3}) to (\dotcoord{1,5});
	\draw (\dotcoord{3,1}) to (\dotcoord{4,1});
	\draw (\dotcoord{3,2}) to (\dotcoord{4,3});
	\draw (\dotcoord{4,1}) to (\dotcoord{4,2});
\end{tikzpicture}
\hspace{5em}
\begin{tikzpicture}
	\dotrows{3,3,3,5,1}
	\draw (\dotcoord{1,1}) to (\dotcoord{2,2});
	\draw (\dotcoord{1,2}) to (\dotcoord{2,3});
	\draw (\dotcoord{2,3}) to (\dotcoord{3,1});
	\draw (\dotcoord{2,1}) to (\dotcoord{3,2});
	\draw (\dotcoord{1,3}) to[bend left = 30] (\dotcoord{3,3});
	\draw (\dotcoord{4,1}) to (\dotcoord{5,1});
	\draw (\dotcoord{4,1}) to (\dotcoord{4,2});
\end{tikzpicture}
\caption{Examples of diagrams. The first and third diagrams are non-flat, while the second has a flat edge in the first row. The first two diagrams are connected (the second thanks to the residual set), but not the last.}
\label{diag:ex}
\end{figure}

Since we are working in the commutative setting, the rows as well as the elements inside each row should be considered as unordered. An edge is called \emph{flat} if it lies within a single row, and a diagram is called \emph{non-flat} if none of its edges are flat (but the residual set of a non-flat diagram is allowed to lie in a single row). Given $\rho \in \mathcal P(I_1,\ldots,I_m)$ we introduce an equivalence relation $\sim_{\rho}$ on the rows $\{1,\ldots,m\}$, given by $i \sim_{\rho} j$ if there exist $H_1,\ldots,H_l \in \rho$ such that $H_1$ intersects row $i$, $H_l$ intersects row $j$, and for $h = 1,\ldots,l-1$, $H_k$ and $H_{k+1}$ intersect a common row. This is an equivalence relation whose equivalence classes we call \emph{connected components} of $(\pi, K) \in \mathcal D$ by taking $\rho = \overline \pi$, and we say a diagram is connected if it has a single connected component. We will use subscripts $\mathrm{NF},\mathrm{C}$ to refer respectively to the subset of non-flat and connected diagrams, combining these if necessary. We comment that the graphical notation is slightly lacking in the case in which the random variables $X$ are not centred, since in that case an edge can also be a singleton, indistinguishable from a single node.

\begin{remark}
These diagrams are similar to those used by \cite{PT}, but different in their specifics. Their notion of non-flatness is stronger than the one used here, i.e.\ edges may not have more than one node per row, and the notion of contraction is different. These differences are the result of different objectives: they do not study Wick products, rather they focus on the completely random measures of \cite{RW97} and their iterated Wiener-type integrals. These measures exclude processes with correlated increments, which we will be able to treat.
\end{remark}

We now associate polynomials to diagrams. Given $D = (\pi, K) \in \mathcal D$, we set
\begin{equation}\label{eq:eval}x^D \coloneqq x^K \ka[X]^\pi, \qquad x^{\dia D} \coloneqq x^{\dia K} \ka[X]^\pi \text{ .}
\end{equation}
This definition coincides with the one in \cite{Giraitis_1986} for total diagrams, in which case $X^D = X^{\dia D} = \ka[X]^D \in \mathbb R$. We recall their main result on combinatorics of diagrams.
\begin{proposition}[{\cite[Theorem 4]{Giraitis_1986}}]\label{prop:EkW}
	\begin{alignat*}{2}
		\E[X^{I_1} \cdots X^{I_m}]
		&={} \sum_{D \in \mathcal P(I_1,\ldots, I_m)} \ka[X]^D
		&\qquad
		\E[X^{\dia I_1} \cdots X^{\dia I_m}]
		&={} \sum_{D \in \mathcal P_\mathrm{NF}(I_1,\ldots, I_m)} \ka[X]^D
		\\[0.5em]
		\ka[X^{I_1}, \ldots ,X^{I_m}]
		&={} \sum_{D \in \mathcal P_\mathrm{C}(I_1,\ldots, I_m)} \ka[X]^D
		&\qquad
		\ka[X^{\dia I_1}, \ldots ,X^{\dia I_m}]
		&={} \sum_{D \in \mathcal P_{\mathrm{NF},\mathrm{C}}(I_1,\ldots, I_m)} \ka[X]^D
	\end{alignat*}
\end{proposition}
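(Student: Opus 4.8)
The plan is to prove the two moment identities directly and then deduce the two cumulant identities from them by a single combinatorial lemma. The first moment identity needs no argument: a total diagram over $(I_1,\ldots,I_m)$ is by definition just a partition $\pi$ of $I=\bigsqcup_j I_j$ (its residual set being empty), and under \eqref{eq:eval} it evaluates to $X^D=\ka[X]^\pi$; so the stated sum is $\sum_{\pi\in\P(I)}\ka[X]^\pi$, which is the Leonov--Shiryaev relation recalled at the start of this section applied to $I$, combined with $\E[X^{I_1}\cdots X^{I_m}]=\E[X^I]$. For the second identity I would expand each Appell factor into monomials through the closed form \eqref{eq:wick},
\[
X^{\dia I_j}=\sum_{J_j\subset I_j}X^{J_j}\sum_{\rho_j\in\P(I_j\setminus J_j)}(-1)^{|\rho_j|}\ka[X]^{\rho_j},
\]
multiply over $j$, take expectations, and expand $\E[\prod_j X^{J_j}]$ by the first identity over $\P(J_1,\ldots,J_m)$. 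This yields a sum, over all triples consisting of submultisets $J_j\subset I_j$, partitions $\rho_j\in\P(I_j\setminus J_j)$, and a partition $\sigma$ of $\bigsqcup_j J_j$, of $\big(\prod_j(-1)^{|\rho_j|}\big)\ka[X]^{\rho_1}\cdots\ka[X]^{\rho_m}\ka[X]^\sigma$. Reorganising by the partition $\pi:=\rho_1\sqcup\cdots\sqcup\rho_m\sqcup\sigma\in\P(I)$, the triples producing a fixed $\pi$ are exactly the ways of labelling each block of $\pi$ by ``$\rho$'' or ``$\sigma$'' subject to the requirement that every $\rho$-labelled block be flat (the multisets $J_j$ are then forced, as the union of the row-$j$ parts of the $\sigma$-labelled blocks). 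Since the sign attached to a triple is $(-1)$ to the number of $\rho$-labelled blocks, summing over labellings contributes $1+(-1)=0$ for each flat block of $\pi$ and $1$ for each non-flat block; hence the coefficient of $\ka[X]^\pi$ is $1$ when $\pi$ has no flat block and $0$ otherwise, giving $\E[\prod_j X^{\dia I_j}]=\sum_{\pi\in\P_{\mathrm{NF}}(I_1,\ldots,I_m)}\ka[X]^\pi$.

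For the cumulant identities I would isolate the following lemma. Call a set $\mathcal F$ of partitions of $I$ \emph{component-multiplicative} if $\pi\in\mathcal F$ holds exactly when, for each class $B$ of the partition of the rows $\{1,\ldots,m\}$ induced by $\sim_\pi$, the restriction of $\pi$ to the rows in $B$ lies in $\mathcal F$; both $\mathcal P$ and $\mathcal P_{\mathrm{NF}}$ are component-multiplicative because flatness of a block is intrinsic. The lemma states: if $Z_1,\ldots,Z_m$ are random variables polynomial in $X$ with $\E[\prod_{j\in B}Z_j]=\sum_{\pi\in\mathcal F((I_j)_{j\in B})}\ka[X]^\pi$ for every $B\subset\{1,\ldots,m\}$, then $\ka[(Z_j)_{j\in B}]=\sum_{\pi\in\mathcal F_{\mathrm C}((I_j)_{j\in B})}\ka[X]^\pi$ for every such $B$. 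One proves it by grouping the moment sum according to the row-partition induced by $\pi$, which by component-multiplicativity factorises it as $\sum_{\tau\in\P(B)}\prod_{B'\in\tau}\big(\sum_{\pi'\in\mathcal F_{\mathrm C}((I_j)_{j\in B'})}\ka[X]^{\pi'}\big)$, comparing with the moment--cumulant expansion $\E[\prod_{j\in B}Z_j]=\sum_{\tau\in\P(B)}\prod_{B'\in\tau}\ka[(Z_j)_{j\in B'}]$, and inducting on $|B|$ (the case $|B|=1$ being automatic, since a one-row diagram is connected). Applying the lemma with $\mathcal F=\mathcal P$ and $Z_j=X^{I_j}$ gives the third identity, and with $\mathcal F=\mathcal P_{\mathrm{NF}}$ and $Z_j=X^{\dia I_j}$ gives the fourth.

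The one genuinely delicate point is the reorganisation in the first paragraph: one must verify carefully that the correspondence between triples $((J_j)_j,(\rho_j)_j,\sigma)$ and partitions $\pi$ decorated by a flat/non-flat-block labelling is a bijection --- in particular that every non-flat block of $\pi$ is forced to be $\sigma$-labelled and that the $J_j$ are then determined --- so that the alternating signs telescope into the indicator of non-flatness. Everything else is either a direct quotation of Leonov--Shiryaev or routine M\"obius inversion over the partition lattice of $\{1,\ldots,m\}$. A more analytic alternative, which sidesteps the sign cancellation, uses the generating-series description in \autoref{prop:wick}: introducing one family of dummy variables $\theta^{(j)}$ per row one has $\E[\prod_j X^{\dia I_j}]=\partial_{\theta^{(1)}_{I_1}}\cdots\partial_{\theta^{(m)}_{I_m}}\exp\big(\EuScript{K}(\sum_j\theta^{(j)})-\sum_j\EuScript{K}(\theta^{(j)})\big)\big|_{\theta=0}$; the Taylor coefficients of the exponent vanish on single-row multisets of indices, so they encode precisely the non-flat edges, the exponential assembles partitions into such edges, and passing to the logarithm---equivalently, keeping only the part connected across all $m$ rows---produces the joint-cumulant identity directly. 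This is the same computation as the sign-cancellation argument, repackaged.
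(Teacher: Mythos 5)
Your argument is correct, but there is no in-paper proof to measure it against: the paper states this proposition as a recalled result, citing \cite{Giraitis_1986} (Theorem 4), so your write-up is a self-contained substitute rather than a variant of an existing argument. On the content: the first identity is indeed just the Leonov--Shiryaev moment--cumulant relation read through the dictionary \say{total diagram over $(I_1,\ldots,I_m)$ $=$ partition of $I=\sqcup_j I_j$}, together with $\E[X^{I_1}\cdots X^{I_m}]=\E[X^I]$. Your proof of the second identity --- expand each Appell factor by the closed form \eqref{eq:wick}, expand the resulting moment by the first identity, regroup by the total partition $\pi$, and observe that each flat block carries a free $\rho$/$\sigma$ label contributing $1+(-1)=0$ while non-flat blocks are forced to be $\sigma$-labelled --- is exactly the sign-cancellation mechanism ($\sum_{\gamma=\alpha\sqcup\tau}(-1)^{|\tau|}=0$) that the paper deploys to prove the product formula \autoref{prop:prod}; the paper in fact notes that the second identity follows from \autoref{prop:prod} by taking expectations and using $\E X^{\dia K}=0$ for $K\neq\varnothing$, which is a marginally shorter route to the same cancellation. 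The delicate point you flag does hold: the correspondence between triples $((J_j),(\rho_j),\sigma)$ and labelled partitions is bijective because a $\rho$-labelled block must lie in a single row (it is a block of some $\P(I_j\setminus J_j)$) and the $J_j$ are recovered as the row-$j$ parts of the $\sigma$-labelled blocks. Your component-multiplicative lemma, proved by factorising the diagram sum over the $\sim_\pi$-classes of rows and inverting on the partition lattice of $\{1,\ldots,m\}$ by induction, is the standard moment-to-cumulant passage and applies verbatim to both $\P$ and $\P_{\mathrm{NF}}$ since flatness is a blockwise property and connectedness (for total diagrams) depends only on $\pi$; this is also essentially how the connected-diagram statements are obtained in the cited reference, and your closing generating-series sketch is in the spirit of the original proof there. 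Minor remarks only: the statement's second display has a typo ($X^{\dia I_1}$ repeated where $X^{\dia I_m}$ is meant), which your proof silently corrects, and your argument nowhere assumes $X$ centred, so it covers the non-centred case (where flat singleton blocks are exactly what gets cancelled), consistent with the paper's conventions.
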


The reason to introduce partial diagrams is to prove product formulae. The following formula generalises the well-known one for Hermite polynomials \cite[Exercise 1.7.1]{NP} (often stated more generally for Wiener integrals \cite[Theorem 2.7.10]{NP}), both in the direction of more general measures, and for more than two factors. While we were not able to find it in the literature, many similar results exist, e.g.\ \cite[Proposition 17]{Anshelevich_2004} in free probability. Note how it implies the second identity in \autoref{prop:EkW} by taking expectations and using the zero-mean property of Appell polynomials.
\begin{theorem}[Product formula]\label{prop:prod}
\[
x^{\dia I_1} \cdots x^{\dia I_m} = \sum_{D \in \mathcal D_\mathrm{NF}(I_1,\ldots,I_m)} x^{\dia D}
\]
\end{theorem}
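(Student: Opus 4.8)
The plan is to prove the identity by strong induction on the total number of nodes $N \coloneqq \sum_{j=1}^m |I_j|$, comparing the two sides as elements of $\mathbb R[x^\mathcal{I}]$. Write $P_{I_\bullet} \coloneqq x^{\dia I_1}\cdots x^{\dia I_m}$ and $Q_{I_\bullet} \coloneqq \sum_{D \in \mathcal D_{\mathrm{NF}}(I_1,\ldots,I_m)} x^{\dia D}$; by the closed form in \autoref{prop:wick} and \eqref{eq:eval} both lie in $\mathbb R[x^i : i \in \bigcup_k I_k]$. Since $x^{\dia \varnothing} = 1$ and a row with no nodes contributes nothing to any diagram, we may discard empty rows, so we assume every $I_k \neq \varnothing$; in particular $N = 0$ forces $m = 0$ and $P_{I_\bullet} = Q_{I_\bullet} = 1$, which is the base case. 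For the inductive step the strategy is: (i) show $P_{I_\bullet} - Q_{I_\bullet}$ is a constant by verifying that the two sides obey the same first-order recursion under $\partial_j$; and (ii) identify that constant as $0$ by substituting $x^i = X^i$ and taking expectations, where \autoref{prop:EkW} applies.

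For (i), fix an index $j$. If $j \notin \bigcup_k I_k$ then $\partial_j P_{I_\bullet} = \partial_j Q_{I_\bullet} = 0$. Otherwise let $m_j(I_k) = {I_k \choose \{j\}}$ denote the multiplicity of $j$ in $I_k$, and let $I_\bullet^{(k,j)}$ be the tuple obtained from $I_\bullet$ by deleting one copy of $j$ from the $k$-th row. The product rule and \eqref{eq_zero_average} give immediately
\[
\partial_j P_{I_\bullet} = \sum_{k=1}^m {I_k \choose \{j\}}\, x^{\dia I_k \setminus \{j\}} \prod_{l \neq k} x^{\dia I_l} = \sum_{k=1}^m m_j(I_k)\, P_{I_\bullet^{(k,j)}}\,.
\]
The matching identity $\partial_j Q_{I_\bullet} = \sum_{k} m_j(I_k)\, Q_{I_\bullet^{(k,j)}}$ is the combinatorial core. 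Differentiating term by term and using \eqref{eq:eval}, \eqref{eq_zero_average}, $\partial_j Q_{I_\bullet} = \sum_{(\pi,K) \in \mathcal D_{\mathrm{NF}}(I_\bullet)} \ka[X]^\pi\, {K \choose \{j\}}\, x^{\dia K \setminus \{j\}}$, and one identifies this with the right-hand side via the bijection sending a pair consisting of a non-flat diagram $D = (\pi,K)$ over $I_\bullet$ together with a distinguished occurrence $c$ of the index $j$ in its residual set $K$ (lying, say, in row $k$) to the datum of that row $k$, the occurrence $c$, and the diagram $(\pi, K \setminus \{c\})$ now regarded over $I_\bullet^{(k,j)}$; the inverse reinserts $c$ into the $k$-th row and into the residual set. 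One checks that $\pi$ stays a valid non-flat partition under this operation — the node set $I_k \setminus K_k$ it partitions is unchanged — and that the weights $\ka[X]^\pi$, $x^{\dia(\cdot)}$ match. Each $I_\bullet^{(k,j)}$ has $N - 1$ nodes, so the inductive hypothesis yields $P_{I_\bullet^{(k,j)}} = Q_{I_\bullet^{(k,j)}}$ and hence $\partial_j(P_{I_\bullet} - Q_{I_\bullet}) = 0$. As this holds for every $j$ and $P_{I_\bullet} - Q_{I_\bullet}$ is a polynomial in the $x^i$ with $i \in \bigcup_k I_k$, it is a constant $c_{I_\bullet} \in \mathbb R$.

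For (ii), substitute $x^i = X^i$ and take expectations. By the second identity of \autoref{prop:EkW}, $\E[X^{\dia I_1}\cdots X^{\dia I_m}] = \sum_{D \in \mathcal P_{\mathrm{NF}}(I_1,\ldots,I_m)} \ka[X]^D$. On the other hand, since $\E X^{\dia K} = 0$ for $K \neq \varnothing$ and $X^{\dia \varnothing} = 1$ by \eqref{eq_zero_average}, and since the total diagrams are precisely those with empty residual set,
\[
\E\big[ Q_{I_\bullet}|_{x = X} \big] = \sum_{(\pi,K) \in \mathcal D_{\mathrm{NF}}(I_\bullet)} \ka[X]^\pi\, \E X^{\dia K} = \sum_{(\pi,\varnothing) \in \mathcal D_{\mathrm{NF}}(I_\bullet)} \ka[X]^\pi = \sum_{D \in \mathcal P_{\mathrm{NF}}(I_1,\ldots,I_m)} \ka[X]^D\,.
\]
Hence $c_{I_\bullet} = \E[P_{I_\bullet}|_{x = X}] - \E[Q_{I_\bullet}|_{x = X}] = 0$, which closes the induction.

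The step I expect to be the main obstacle is the identity $\partial_j Q_{I_\bullet} = \sum_k m_j(I_k)\, Q_{I_\bullet^{(k,j)}}$: while the bijection is conceptually transparent, stating it precisely requires care with the multiset (or tagged-slot) conventions so that the multiplicities $m_j(\cdot)$ are accounted for correctly — a naive enumeration of "ways to reinsert a node" would undercount. An alternative and essentially orthogonal route would go through generating functions: using the fourth characterisation in \autoref{prop:wick}, assign one family of formal parameters $\theta^{(j)}$ to each row, collapse $\prod_j \exp(\sum_i \theta^{(j)}_i x^i - \EuScript{K}(\theta^{(j)}))$ using $\Theta_i \coloneqq \sum_j \theta^{(j)}_i$, observe that $\EuScript{K}(\Theta) - \sum_j \EuScript{K}(\theta^{(j)})$ retains exactly the monomials whose index occurrences are spread over at least two rows (the non-flat edges), expand the resulting exponential by the exponential formula to produce non-flat partitions, and read off the coefficient of $\prod_j \theta^{(j)}_{I_j}$; this replaces the bijection above with the standard symmetry-factor bookkeeping of the exponential formula.
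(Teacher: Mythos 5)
Your argument is correct, but it takes a genuinely different route from the paper. You induct on the total number of nodes, using the Appell property $\partial_j x^{\dia I} = \binom{I}{\{j\}} x^{\dia I \setminus \{j\}}$ to show both sides satisfy the same first-order recursion (so their difference is a constant), and then fix the constant by substituting $X$ and taking expectations, where the left side is evaluated by the second identity of \autoref{prop:EkW} and the right side collapses, via $\E X^{\dia K} = 0$ for $K \neq \varnothing$, onto the total non-flat diagrams. The multiplicity bookkeeping you flag as the delicate point does work out: with nodes treated as tagged slots (the convention implicit in the paper's diagram sums), removing/reinserting a free node of index $j$ gives, for each fixed occurrence in row $k$, a weight-preserving bijection between full diagrams in which that node is free and diagrams over $I_\bullet^{(k,j)}$, producing exactly the factor $m_j(I_k)$. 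The paper instead inducts on the number of factors $m$: the case $m=2$ is done by expanding both Appell polynomials through the closed form \eqref{eq:wick} and the inverse formula \eqref{eq:inverse}, regrouping flat and non-flat blocks, and killing the flat contributions by the sign cancellation $\sum_{\gamma = \alpha \sqcup \tau} (-1)^{|\tau|} = 0$; the step $m \to m+1$ then uses that a non-flat diagram on $m+1$ rows is uniquely built by adding non-flat edges between free nodes and the new row. The trade-off is that the paper's proof is self-contained (it derives the formula directly from \autoref{prop:wick} and, as remarked after the statement, \emph{implies} the second identity of \autoref{prop:EkW}), whereas your proof consumes that identity as an input; this is legitimate here because \autoref{prop:EkW} is quoted from \cite{Giraitis_1986}, but it means your argument could not be used to rederive that moment formula without circularity, while it buys a shorter verification that isolates the combinatorics in a single transparent node-removal bijection.
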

\begin{proof}
We prove this by induction on $m$. The cases $m = 0, 1$ are trivial. We start with $m = 2$ and call $I$ and $J$ the two rows. We compute, using \autoref{prop:wick} (see \autoref{fig:partition-drawing})

\begin{equation*}
\begin{split}
&x^{\diamond I}x^{\diamond J}\\
={}&\sum_{\substack{L \subset I \\ M \subset J}} x^{L\sqcup M} \sum_{\substack{\pi \in \P(I \setminus L) \\ \rho \in \P(J \setminus M)}} (-1)^{|\pi| + |\rho|} \ka[X]^{\pi \sqcup \rho} \\
={}&\sum_{N \subset I \sqcup J} \sum_{K \subset N} x^{\dia K} \sum_{\sigma \in \P(N \setminus K)} \ka[X]^\sigma \sum_{\substack{\pi \in \P(I \setminus N) \\ \rho \in \P(J \setminus N) \\ \tau \coloneqq \pi \sqcup \rho}}  (-1)^{|\tau|} \ka[X]^{\tau}
\end{split}
\end{equation*}

\begin{figure}[h!]
\centering 
\begin{tikzpicture}
\dotrows{20,23}
\draw[Cerulean] (\dotcoord{1,7}) to (\dotcoord{1,9});
\draw[Cerulean] (\dotcoord{1,10}) to (\dotcoord{1,11});
\draw[Cerulean] (\dotcoord{1,12}) to (\dotcoord{1,13});
\draw[Cerulean] (\dotcoord{2,6}) to (\dotcoord{2,10});
\draw[Cerulean] (\dotcoord{2,11}) to (\dotcoord{2,14});
\draw[OrangeRed] (\dotcoord{1,14}) to (\dotcoord{1,17});
\draw[OrangeRed] (\dotcoord{2,15}) to (\dotcoord{2,16});
\draw[OrangeRed] (\dotcoord{1,18}) to (\dotcoord{2,18});
\draw[OrangeRed] (\dotcoord{2,17}) to (\dotcoord{2,19});
\draw[OrangeRed] (\dotcoord{1,19}) to (\dotcoord{2,20});
\draw[OrangeRed] (\dotcoord{1,20}) to (\dotcoord{2,21});
\draw[OrangeRed] (\dotcoord{2,21}) to (\dotcoord{2,23});
\end{tikzpicture}
\caption{Drawing representing the above partitions. 
The $\tau$ partition (in blue) is flat, while generally only part of the sets in $\sigma$ (in red) are flat. In the next part of the proof, we will group $\tau$ with the sets in $\sigma$ which are flat.}
\label{fig:partition-drawing}
\end{figure}
We use the subscript $\mathrm{F}$ to mean \say{flat} and write $\mathcal P$ without necessarily specifying the set of which we are taking the partition (it is necessarily just the union of its sets). Writing $\P(N \setminus K) \ni \sigma = \alpha \sqcup \beta$ with $\alpha \in \P_\mathrm{F}$, $\beta \in \P_\mathrm{NF}$, we continue

\begin{equation*}
\begin{split}
={}&\sum_{N \subset I \sqcup J} \sum_{K \subset N} x^{\dia K} \sum_{\substack{\alpha \in \P_\mathrm{F} \\ \beta \in \P_\mathrm{NF} \\ \alpha \sqcup \beta \in \P(N \setminus K)}} \ka[X]^{\alpha \sqcup \beta} \sum_{\tau \in \P_\mathrm{F}((I \sqcup J) \setminus N)}  (-1)^{|\tau|} \ka[X]^{\tau} \\
={}&\sum_{K \subset I \sqcup J} x^{\dia K} \sum_{\substack{\gamma \in \P_\mathrm{F} \\ \beta \in \P_\mathrm{NF} \\ \gamma \sqcup \beta \in \P((I \sqcup J) \setminus K)}} \ka[X]^{\gamma \sqcup \beta} \sum_{\gamma = \alpha \sqcup \tau}  (-1)^{|\tau|}\text{ .}
\end{split}
\end{equation*}
The last sum is over all ways of splitting the partition $\gamma$ into two partitions $\alpha$ and $\tau$, and if $|\gamma| > 0$ we have
\[
\sum_{\gamma = \alpha \sqcup \tau}  (-1)^{|\tau|} = 2^{|\gamma| - 1}  - 2^{|\gamma| - 1} = 0\text{ .}
\]
The only case that contributes to the sum is therefore when $\gamma$ is the empty partition, in which case $\beta \in \P_\mathrm{NF}((I \sqcup J) \setminus K)$, reducing the sum to 
\[
\sum_{K \subset I \sqcup J}\sum_{\pi \in \P_\mathrm{NF}((I \sqcup J) \setminus K)} \ka[X]^\pi x^{\diamond K} =\sum_{D \in \mathcal D_\mathrm{NF}(I,J)}x^{\dia D} \text{ .}
\]
For the induction step, we apply the case above and obtain
\[
x^{\dia I_1} \cdots x^{\dia I_{m+1}} = \sum_{(\pi, K) \in \mathcal D_\mathrm{NF}(I_1,\ldots,I_m)} \ka[X]^\pi x^{\dia K} x^{\dia I_{m+1}} = \sum_{(\pi, K) \in \mathcal D_\mathrm{NF}(I_1,\ldots,I_m)} \ka[X]^\pi \sum_{D \in \mathcal D_\mathrm{NF}(K,I_{m+1})} x^{\dia D}
\]
The conclusion now follows by observing that any non-flat diagram on rows $I_1,\ldots,I_{m+1}$ is uniquely obtained by adding non-flat edges that join free nodes in a non-flat diagram on rows $I_1,\ldots,I_m$ and the row $I_{m+1}$.
\end{proof}

\begin{theorem}[Reverse product formula]\label{thm:reverse}
\[
x^{I_1} \dia \cdots \dia x^{I_m} = \sum_{(\pi,K) \in \mathcal D_\mathrm{NF}(I_1,\ldots,I_m)} (-1)^{|\pi|} x^{(\pi,K)}
\]
\end{theorem}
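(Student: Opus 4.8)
The plan is to show that \emph{both} sides of the claimed identity equal
\[
\sum_{D \in \mathcal D_{\mathrm{F}}(I_1,\ldots,I_m)} x^{\dia D},
\]
where $\mathcal D_{\mathrm{F}}$ denotes the set of diagrams all of whose edges are flat; recall that $x^{\dia (\pi,K)} = x^{\dia K}\,\ka[X]^\pi$, so this is the sum of $\ka[X]^\rho\, x^{\dia K}$ over all ways of covering part of the nodes by flat edges $\rho$, with residual set $K$. Only the inversion formula \eqref{eq:inverse} and the defining rule \eqref{eq:wickprod} are needed; after expanding everything, the identity reduces to a sign cancellation over partitions.

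\textbf{Left-hand side.} I would expand each factor in the Appell basis via \eqref{eq:inverse}, $x^{I_j} = \sum_{K_j \subset I_j} x^{\dia K_j}\sum_{\sigma_j \in \P(I_j \setminus K_j)} \ka[X]^{\sigma_j}$, i.e.\ $x^{I_j} = \sum_{D_j \in \mathcal D(I_j)} x^{\dia D_j}$, the sum over all (necessarily flat) one-row diagrams on $I_j$. Multiplying out with \eqref{eq:wickprod}, using commutativity and associativity of $\dia$ and $\ka[X]^{\sigma_1}\cdots\ka[X]^{\sigma_m} = \ka[X]^{\sigma_1 \sqcup \cdots \sqcup \sigma_m}$, and noting that a choice of one-row diagrams $(\sigma_1,K_1),\ldots,(\sigma_m,K_m)$ on the rows $I_1,\ldots,I_m$ is precisely the datum of a diagram on these rows with all edges flat, gives directly
\[
x^{I_1} \dia \cdots \dia x^{I_m} = \sum_{D \in \mathcal D_{\mathrm{F}}(I_1,\ldots,I_m)} x^{\dia D}.
\]

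\textbf{Right-hand side.} In each summand $(-1)^{|\pi|}x^{(\pi,K)} = (-1)^{|\pi|}\,\ka[X]^\pi\, x^K$ I would expand the ordinary monomial $x^K$ again by \eqref{eq:inverse}, $x^K = \sum_{L \subset K} x^{\dia L}\sum_{\sigma \in \P(K \setminus L)} \ka[X]^\sigma$. Since $\pi$ partitions $I \setminus K$ and $\sigma$ partitions $K \setminus L$, with $I = \bigsqcup_j I_j$, these live on disjoint node sets, so $\ka[X]^\pi \ka[X]^\sigma = \ka[X]^{\pi \sqcup \sigma}$ and $\tau \coloneqq \pi \sqcup \sigma$ is a partition of $I \setminus L$. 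The key step is to reindex the resulting multiple sum: the data $(\pi, K, L, \sigma)$ is in bijection with a sub-multiset $L \subset I$, a partition $\tau$ of $I \setminus L$, and a sub-collection $\pi \subseteq \tau$ consisting of blocks that are all non-flat (and $K$ is then recovered as $I$ with the nodes covered by $\pi$ removed). The right-hand side thereby becomes
\[
\sum_{L \subset I} x^{\dia L} \sum_{\tau \in \P(I \setminus L)} \ka[X]^\tau \sum_{\substack{\pi \subseteq \tau \\ \text{all blocks of }\pi\text{ non-flat}}} (-1)^{|\pi|},
\]
and the innermost sum is $0$ whenever $\tau$ has at least one non-flat block (including or omitting any given non-flat block contributes $\pm 1$, and these cancel) and is $1$ when every block of $\tau$ is flat. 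So only flat partitions $\tau$ survive, and the right-hand side collapses to $\sum_{D \in \mathcal D_{\mathrm{F}}(I_1,\ldots,I_m)} x^{\dia D}$, which equals the left-hand side.

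\textbf{Expected main obstacle.} Each side is a short calculation; the delicate point is the bijective reindexing on the right-hand side --- disentangling how the residual set $K$, the sub-multiset $L$ of the Appell expansion of $x^K$, and the two partitions $\pi$ (of $I \setminus K$) and $\sigma$ (of $K \setminus L$) combine into a single partition of $I \setminus L$ with a distinguished family of non-flat blocks, keeping careful track of the sign $(-1)^{|\pi|}$. Once that is in place, the cancellation $1 + (-1) = 0$ per non-flat block is automatic. As an alternative one could induct on $m$ as in the proof of \autoref{prop:prod}, reducing to two rows and then adjoining one row at a time, but the two-row base case already requires essentially the same computation, so the direct argument seems preferable.
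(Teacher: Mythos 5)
Your proof is correct, but it takes a different route from the paper's. The paper disposes of this theorem in one line: it runs the same induction-on-$m$ scheme as in the proof of \autoref{prop:prod}, establishing the two-row case by the analogous ``expand and cancel'' computation with \eqref{eq:inverse} and \eqref{eq:wick} used in reverse order, and then adjoining one row at a time --- precisely the alternative you mention and set aside. You instead argue directly for all $m$ at once, by exhibiting the common intermediate expression $\sum_{D \in \mathcal D_{\mathrm F}(I_1,\ldots,I_m)} x^{\dia D}$ over all-flat diagrams: on the left this is immediate from \eqref{eq:inverse} and the defining rule \eqref{eq:wickprod} (so you never need the closed form \eqref{eq:wick}), and on the right it follows from re-expanding $x^K$, the reindexing $(\pi,K,L,\sigma)\leftrightarrow(L,\tau,\pi\subseteq\tau)$, and the cancellation $\sum_{\pi}(-1)^{|\pi|}=(1-1)^k$ over subsets of the $k$ non-flat blocks of $\tau$ --- the same inclusion--exclusion mechanism that drives the paper's two-row computations, but deployed once globally rather than inside an induction. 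What each approach buys: the paper's version is essentially free given the already-proved \autoref{prop:prod}, reusing that computation verbatim; yours is self-contained, avoids the induction step (and hence any appeal to how non-flat diagrams on $m+1$ rows are built from those on $m$ rows), and isolates the reusable identity ``Wick product of monomials $=$ flat-diagram expansion,'' at the modest cost of the bookkeeping in the bijective reindexing (which, as in the paper's own proofs, should be read with the usual multiset conventions on submultisets and partitions).
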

\begin{proof}
The proof of the first statement is completely analogous to that of \autoref{prop:prod}, using \eqref{eq:inverse} and \eqref{eq:wick} in reverse order to prove the case $m=2$, and the general case follows as before.
\end{proof}

We now return to the lack of associativity discussed in \autoref{rem:ass} and qualify it more precisely.

\begin{theorem}[Change of chaos]\label{thm:change}
Let $Y^k = X^{\dia_X I_k}$ for $k = 1,\ldots,m$.
\[
y^1 \dia_Y \cdots \dia_Y y^m \big|_{y^j = x^{\dia I_{\scaleto{j}{3.5pt}}}} = \sum_{\substack{(\pi, K) \in \mathcal D_{\mathrm{NF}, \mathrm{C}}(I_1,\ldots,I_m) \\ |K| > 0}} x^{\dia_X (\pi, K)}
\]
\end{theorem}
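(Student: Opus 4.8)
The plan is to reduce the statement to the two product formulae already proven, by expanding both sides over diagrams and matching the surviving terms. First I would apply the Product formula (\autoref{prop:prod}) with the roles of the base variables played by $Y = (Y^1, \ldots, Y^m)$: since each $Y^k$ is a \emph{single} indeterminate with respect to the $y$-variables, a diagram on the rows $\{1\}, \ldots, \{m\}$ has at most one node per row and hence all edges are non-flat automatically; the non-flatness constraint on the $Y$-side becomes vacuous. Thus $y^1 \dia_Y \cdots \dia_Y y^m = \sum_{D} y^{\dia_Y D}$ where $D = (\pi, K)$ ranges over all diagrams whose total partition $\overline\pi$ is a partition of $\{1,\ldots,m\}$, and $y^{\dia_Y D} = y^{\dia_Y K}\,\ka[Y]^\pi$. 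Now substitute $y^j = x^{\dia I_j}$: the evaluation $\ka[Y]^\pi = \prod_{B \in \pi} \ka[Y^B] = \prod_{B \in \pi} \ka[X^{\dia I_b} : b \in B]$ is governed by the \emph{last} identity in \autoref{prop:EkW}, namely $\ka[X^{\dia I_{b_1}}, \ldots, X^{\dia I_{b_r}}] = \sum_{E \in \mathcal P_{\mathrm{NF},\mathrm C}(I_{b_1},\ldots,I_{b_r})} \ka[X]^E$; and the factor $y^{\dia_Y K}|_{y = x^{\dia I}} = X^{\dia_X I_{k_1}} \cdots X^{\dia_X I_{k_r}}$ for $K = \{k_1,\ldots,k_r\}$, which by \autoref{prop:prod} again equals $\sum_{F \in \mathcal D_{\mathrm{NF}}(I_{k_1},\ldots,I_{k_r})} x^{\dia_X F}$.

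The second step is bookkeeping: collect all these pieces. Each term on the left, after full expansion, is indexed by a choice of a partition $\overline\pi = \pi \sqcup \{K\}$ of $\{1,\ldots,m\}$, a non-flat connected diagram on the $X$-rows within each block of $\pi$, and a non-flat diagram on the $X$-rows within the residual block $K$. Gluing all these sub-diagrams together produces a non-flat $X$-diagram $(\sigma, L)$ on rows $I_1, \ldots, I_m$, where $L$ is the residual set of the $K$-sub-diagram. The key combinatorial claim is that this gluing map is a bijection onto the set of \emph{connected} non-flat $X$-diagrams $(\sigma, L)$ on $I_1,\ldots,I_m$ (with $L$ recording the residual), provided we sum over the appropriate data — but that is precisely where a cancellation must occur, because the left-hand side as written does not obviously restrict to connected diagrams. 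So the third step is to perform that cancellation: group the expansion by the underlying $X$-diagram $(\sigma,L)$ on all $m$ rows, and show that the coefficient of $x^{\dia_X(\sigma,L)}$ is $1$ if $(\sigma,L)$ is connected with $|L| > 0$ and $0$ otherwise. For a fixed $(\sigma,L)$, the coefficient is a sum over all ways of partitioning the $X$-rows into a "residual block" $K \supseteq (\text{rows meeting } L)$ and a collection of further blocks of $\pi$, each of which must be a union of connected components of $\sigma$ restricted to those rows and must itself be connected in the $Y$-sense — this forces each $Y$-block of $\pi$ to be a single connected component of the diagram $(\sigma, \cdot)$ not containing $L$. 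Hence if $(\sigma, L)$ is already connected, the only choice is $K = \{1,\ldots,m\}$, $\pi = \varnothing$, contributing $1$; if $(\sigma,L)$ has $c \geq 1$ connected components disjoint from the one containing $L$ (or from $L$ itself when $L$ touches no further rows), each such component can independently be placed in its own $\pi$-block or merged into $K$ — wait, it cannot be merged into $K$ while keeping $(\sigma,L)$ fixed, since $K$'s sub-diagram is $\mathrm{NF}$ but need not be connected. This is the subtle point: I would instead track it by noting that a disconnected component \emph{can} go into the $K$-sub-diagram (which allows disconnected non-flat diagrams) or into a $\pi$-block, and there is no sign on the $\dia_Y$ side, so naively the coefficient would be $2^c$, not $0$. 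The resolution must be that $\ka[Y]^\varnothing = 0$ by the stated convention is \emph{not} what saves us; rather, one must observe that $X^{\dia I}$ has mean zero, so in the reduction $y^{\dia_Y K}|_{y = x^{\dia I}}$ must produce an Appell polynomial of positive degree, i.e.\ the contributions with $|L| = 0$ (total $K$-sub-diagram) are exactly the mean, and... Let me restate.

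The cleanest route, which I would actually follow, avoids re-deriving the cancellation from scratch: argue by induction on $m$, exactly as in \autoref{prop:prod} and \autoref{thm:reverse}. For $m = 1$ the left side is $x^{\dia I_1}$ and the right side is the sum over connected non-flat diagrams on one row with $|K| > 0$, i.e.\ just $x^{\dia I_1}$ (the empty partition, $K = I_1$), so equality holds. For the inductive step, write $y^1 \dia_Y \cdots \dia_Y y^{m+1} = (y^1 \dia_Y \cdots \dia_Y y^m) \dia_Y y^{m+1}$; apply the inductive hypothesis to the first factor, then expand the final $\dia_Y$ with $y^{m+1}$ using the $m=2$ case of \autoref{thm:reverse}-style reasoning adapted to the $Y$-variables — since the first factor is a polynomial in the $y$'s whose expansion in terms of $y^{\dia_Y}$-monomials is controlled by the inductive hypothesis, and the Wick product $\dia_Y$ with the linear factor $y^{m+1}$ acts by the recursion in \autoref{prop:wick}(1). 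Concretely, $P \dia_Y y^{m+1} = P \cdot y^{m+1} - \mathbb{E}[\,\cdot\,]$-corrections, and under $y^j = x^{\dia I_j}$ the product $X^{\dia_X(\sigma,L)} \cdot X^{\dia_X I_{m+1}}$ re-expands by \autoref{prop:prod} while the correction terms subtract precisely the diagrams in which the new row $I_{m+1}$ fails to connect. I expect the main obstacle to be making this last step airtight: tracking how "connectedness with $|K| > 0$" is preserved when a new row is adjoined, in particular ruling out the spurious terms where $I_{m+1}$ connects only to the residual set $L$ and thereby "uses it up". The bookkeeping is morally the same as the "adding non-flat edges to free nodes" argument closing the proof of \autoref{prop:prod}, but now with the extra constraint that the residual block must retain a node and the whole diagram must stay connected; I would handle this by carefully distinguishing, among the non-flat diagrams produced by $\dia_Y$ with $y^{m+1}$, those whose new edges touch the old residual set versus those that don't, and checking the correction terms from the $\dia_Y$-recursion kill exactly the disconnected remainder.
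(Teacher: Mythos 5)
Your first route starts from an identity that is not true: \autoref{prop:prod} expands the \emph{ordinary} product $y^1\cdots y^m$ of Appell polynomials as a sum of Wick terms over diagrams, whereas the quantity you need, $y^1\dia_Y\cdots\dia_Y y^m$, is by definition the single Appell polynomial $y^{\dia_Y\{1,\ldots,m\}}$, whose expansion in ordinary $y$-monomials and $Y$-cumulants is the closed form \eqref{eq:wick} (equivalently the $\dia_Y$-version of \autoref{thm:reverse}), carrying the signs $(-1)^{|\rho|}$. Those signs are exactly what the argument needs: the paper expands $\sum_{(\rho,R)\in\mathcal D([m])}(-1)^{|\rho|}\prod_{r\in R}x^{\dia I_r}\prod_{P\in\rho}\kappa[X^{\dia I_p}\colon p\in P]$, applies \autoref{prop:prod} to the product over $R$ and the fourth identity of \autoref{prop:EkW} to each cumulant, regroups by the global diagram, and the coefficient of a diagram having $q\geq 1$ total connected components besides the residual one is $\sum_{[q]=A\sqcup B}(-1)^{|A|}=0$. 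The ``coefficient $2^c$ instead of $0$'' dead end you run into is therefore not a subtlety to be explained away inside your expansion; it is the symptom of having used the sign-free product formula where the signed one is required, and that route cannot be completed as stated.

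Your fallback induction is a reasonable plan, but it stops exactly where the theorem lives. The correct one-step recursion is $y^{\dia_Y[m]}\dia_Y y^{m+1}=y^{\dia_Y[m]}\,y^{m+1}-\sum_{\varnothing\neq S\subset[m]}\kappa[Y^s\colon s\in S;Y^{m+1}]\,y^{\dia_Y([m]\setminus S)}$; note the corrections are joint cumulants of the $Y$'s, not expectations, so the recursion of \autoref{prop:wick}(1), written against the ordinary monomial $y^{[m+1]}$, is not literally ``$P\cdot y^{m+1}$ minus $\mathbb E$-corrections''. The inductive step then requires showing that, after substituting $y^j=x^{\dia I_j}$, expanding $\kappa[Y^S,Y^{m+1}]$ via \autoref{prop:EkW} and the product $x^{\dia_X(\sigma,L)}\cdot x^{\dia_X I_{m+1}}$ via \autoref{prop:prod}, the subtracted terms cancel exactly the disconnected diagrams produced by the product (those in which the component of row $I_{m+1}$ is total and disjoint from the residual component). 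You name this as the expected obstacle and do not prove it; since this matching (or, in route one, the sign cancellation) is precisely the content of the theorem, the proposal has a genuine gap. The induction could probably be completed along the lines you sketch --- one would verify that adjoining a new row to a connected diagram with nonempty residual creates at most one additional connected component, necessarily total, which is then killed by the corresponding cumulant correction --- but that verification is the proof, and it is missing.
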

\begin{proof}
Observe that for any $J_1,\ldots,J_n \subset \mathcal I$ and any $D \in \mathcal D(J_1,\ldots,J_n)$ exactly one of its connected components will contain the residual set if this is non-empty, while the rest will belong to $\mathcal P$ (i.e.\ be a full diagram). We write $D_1,\ldots,D_h, E \in \mathrm{cc}(D)$ to mean that $D_1,\ldots,D_h, E$ are the connected components of $D$ with $D_1,\ldots,D_h \in \mathcal P$ and $E \in \mathcal D$ (possibly empty if $h > 0$). Moreover, we write $[m]$ for the set with $m$ elements which we view as rows, each containing a single element: therefore $(\rho,R) \in \mathcal D([m])$ simply denotes a partition of $[m]$ with a distinguished set $R$. Using the definition of Wick product (3.\ in \autoref{prop:wick}) \autoref{prop:prod} and the fourth identity in \autoref{prop:EkW} we compute (see \autoref{fig:coc})
\begin{align*}
&y^1 \dia_Y \cdots \dia_Y y^m \big|_{y^j = x^{\dia I_{\scaleto{j}{3.5pt}}}} \\&= \sum_{(\rho,R) \in \mathcal D([m])} (-1)^{|\rho|} \prod_{r \in R}x^{\dia I_r} \prod_{P \in \rho} \ka[X^{\dia I_p} : p \in P] \\
&= \sum_{(\rho,R) \in \mathcal D([m])} (-1)^{|\rho|} \Big(\sum_{D \in \mathcal D_\mathrm{NF}(I_r : r \in R)} x^{\dia D} \Big) \prod_{P \in \rho} \sum_{C \in \mathcal P_{\mathrm{NF},\mathrm{C}}(I_p : p \in P)} \ka[X]^C \\
&= \sum_{(\rho,R) \in \mathcal D([m])} (-1)^{|\rho|} \Big(\sum_{\substack{D \in \mathcal D_\mathrm{NF}(I_r : r \in R) \\ D_1,\ldots,D_s,E \in \mathrm{cc}(D)}} \prod_{k = 1}^s\ka[D_k] \cdot x^{\dia E} \Big) \prod_{P \in \rho} \sum_{C \in \mathcal P_{\mathrm{NF},\mathrm{C}}(I_p : p \in P)} \ka[X]^C \\
&= \sum_{\substack{F \in \mathcal D_\mathrm{NF}(I_1,\ldots,I_m) \\ G_1,\ldots,G_q,E \in \mathrm{cc}(F)}} x^{\dia E}  \prod_{k = 1}^q \ka[X]^{G_k}\sum_{[q] = A \sqcup B} (-1)^{|A|} \text{ .}
\end{align*}
As before, the last sum cancels for any $q > 0$, leaving the desired expression.
\end{proof}
\begin{figure}[h!] \centering \begin{tikzpicture}[rotate = 90]
\dotrows{2,4,4,4,4,3,4,4,3,4,3} \draw[dashed, line width = 0.5pt] ($ (p-2-1)!0.5!(p-3-1) + (-3mm,0) $) -- ($ (p-2-4)!0.5!(p-3-4) + (3mm,0) $); \draw[line width = 0.5pt] ($ (p-4-1)!0.5!(p-5-1) + (-3mm,0) $) -- ($ (p-4-4)!0.5!(p-5-4) + (3mm,0) $); \draw[dashed, line width = 0.5pt] ($ (p-7-1)!0.5!(p-8-1) + (-3mm,0) $) -- ($ (p-7-4)!0.5!(p-8-4) + (3mm,0) $); \draw[OrangeRed] (\dotcoord{11,1}) to (\dotcoord{11,2}); \draw[OrangeRed] (\dotcoord{11,1}) to (\dotcoord{10,1}); 
\draw[OrangeRed] (\dotcoord{9,1}) to (\dotcoord{9,2}); \draw[OrangeRed] (\dotcoord{9,2}) to (\dotcoord{8,2}); \draw[OrangeRed] (\dotcoord{8,1}) to (\dotcoord{8,4}); 
\draw[OrangeRed] (\dotcoord{5,1}) to (\dotcoord{7,1}); 
\draw[OrangeRed] (\dotcoord{7,1}) to (\dotcoord{7,4});
\draw[OrangeRed] (\dotcoord{5,4}) to (\dotcoord{5,2});
\draw[OrangeRed] (\dotcoord{5,2}) to (\dotcoord{6,2});
\draw[OrangeRed] (\dotcoord{6,2}) to (\dotcoord{6,3});
\draw[Cerulean] (\dotcoord{1,2}) to (\dotcoord{2,2});
\draw[Cerulean] (\dotcoord{1,1}) to (\dotcoord{2,1});
\draw[Cerulean] (\dotcoord{2,2}) to (\dotcoord{2,4});
\draw[Cerulean] (\dotcoord{3,1}) to (\dotcoord{4,2});
\draw[Cerulean] (\dotcoord{4,2}) to (\dotcoord{4,4});
\draw[Cerulean] (\dotcoord{4,1}) to (\dotcoord{3,2});
\draw[Cerulean] (\dotcoord{3,2}) to (\dotcoord{3,4});
\end{tikzpicture}
\caption{The diagram $F$, rotated by 90$^\circ$ counterclockwise to improve the layout, so that rows have become columns. The dividing lines represent connected components, with the blue terms those coming from $\rho$ and the red terms coming from $R$. The proof consists of considering together connected components without a residual set, in the picture the first three starting from the right. In the proof, edges in the diagram with a single node per row $(\rho, R) \in \mathcal D([m])$ correspond to connected components in $F$.}
\label{fig:coc}
\end{figure}

\section{Series in Appell polynomials}\label{sec:series}

In this section we take the indexing set $\mathcal I = [d] = \{1,\ldots,d\}$ so that $X$ is an $\mathbb R^d$-valued random variable, whose law we denote $\mu$. The factors of our Wick product are entire functions $f \colon \mathbb R^d \to \mathbb R$ which we implicitly extend to $\mathbb C^d$. While for purely combinatorial/diagrammatic arguments we find that it is more natural to work with multisets (since it is often irrelevant how many repetitions of an index there are), when working with functions in $d$ variables it is better to work with multiindices, i.e.\ arrays of $d$ non-negative integers, each entry of which counts the multiplicity of the corresponding index. There is a one-to-one correspondence between submultisets of $[d]$ and multiindices with $d$ entries. We will therefore continue using the same notation for both, i.e.\ letter $I, J, K, \ldots$ can interchangeably be viewed as a submultisets of $[d]$ or tuple with $d$ entries. Interpreted as the latter, we consider the multiindex factorial
\[
K! \coloneqq K_1! \cdots K_d!, \qquad K = (K_1,\ldots,K_d).
\]
With this notation, it holds that for $J \subset I$ (in which the inclusion takes multiplicity into account)
\[
\partial_J x^I = \frac{I!}{(I \setminus J)!} x^{I \setminus J} = J! \binom{I}{J} x^{I \setminus J},
\]
The differentiation rule for $x^{\dia I}$ is the same. Here multiset difference corresponds to entrywise subtraction in the multiindex; similarly, disjoint union of multisets corresponds to addition of multiindices. Notice that while the correspondence between multisets and multiindices is one-to-one, there are $\binom{I}{J}$ distinct ways in which to fit the multiset $J$ inside of $I$.

We will consider the moment-generating function of $\mu$ as function of several complex variables $\theta \in \mathbb C^d$,
\begin{equation}
\M(\theta) = \E \exp(\theta \cdot X) = \int_{\Rd} \exp(\theta \cdot x) \mu(\dif x).
\end{equation}
This is a holomorphic function as long as it is finite. More precisely, let $R = (R_1,\ldots,R_d) \in  (0,\infty]^d$ be a polyradius with $\E e^{q \cdot |X|} < \infty$ for all $q < R$---here and below we write $q < R$ to mean $q_i < R_i$ for all $i = 1,\ldots, d$, and use similar entrywise notation (e.g.\ $|X|$ will sometimes mean $(|X^1|, \ldots, |X^d|)$ as can be deduced from context). Then $\M$ is holomorphic on the polydisc 
\begin{equation}\label{eq:polydisc}
D_R \coloneqq \{|\theta| < R\} = \{\theta \in \mathbb C^d : |\theta_i| < R_i, \ i = 1,\ldots, d\}.
\end{equation}
In fact, since $|e^{\theta \cdot X}| = e^{\Re \theta \cdot X}$, $\M$ is actually holomorphic on the tube domain $\{\Re\theta < R\}$, but it is more natural to work with polydiscs since that is where $\M$ can be expanded in a power series around the origin.

For an entire function $f$, its (multivariate) \emph{Borel transform} \cite{boasEntire} is defined by the Laurent series
\[
\EuScript{B}f(z) = \sum_{K} \frac{\partial_K f(0)}{z^{K+1}}
\]
where here and below $K + 1$ is the multiindex in which each entry in $K$ is increased by $1$, whenever the series is convergent, and we sum over all multiindices $K$, including the zero multiindex corresponding to the empty multiset (unless otherwise specified). Then the definition of Appell polynomials in terms of generating functions \autoref{prop:wick} can be written as
\begin{equation}\label{eq:wickGenerating}
	\frac{\exp({\theta \cdot z})}{\EuScript{M}(\theta)} = \sum_K \frac{z^{\dia K}}{K!}\theta^K.
\end{equation}

The first part of the next theorem is the $d$-variable analogue of a special case of the results in \cite{mar36} (see \cite[Theorem 9.2]{BB}). Since the extension to several complex variables is a more or less standard consequence of iterated single variable Cauchy estimates, we have moved the proof to \autoref{app:proof}, which also contains an important estimate on the growth of derivatives of entire functions of exponential type. In the second part of the theorem, we prove, using similar techniques, that it is possible to define the Wick product of two entire functions with small enough exponential types compared to the radius of analyticity of $\M$ and $1/\M$.

\begin{theorem}\label{thm:martin}
	Let $R \in (0, \infty]^d$ be a polyradius such that $\M$ is defined and non-zero on the open polydisc $D_R$. Let $f \colon \mathbb C^d \to \mathbb C$ be entire of exponential type strictly bounded by $R$, i.e.\ there exists some $r < R$ and $C$ such that
	\begin{equation}\label{eq:exp_growth}
	|f(z)| \leq C \exp( r \cdot |z|).
	\end{equation}
	Then $\EuScript{B}f$ is defined and holomorphic on 
	\(
	E_r \coloneqq \{\theta \in \mathbb C^d : |\theta_k| > r_k \}
	\)
	and $f$ can be expanded in a series of Appell polynomials as follows:
	\begin{align*}
		f(z) &= \sum_K L_K(f) \frac{z^{\dia K}}{K!}, \qquad \text{with} \\
		L_K(f) &= \frac{1}{(2\pi i)^d} \int_{|\theta_1| = \rho_1} \cdots \int_{|\theta_d| = \rho_d} \theta^K \EuScript{M}(\theta) \EuScript{B}f(\theta) \dif \theta_d \cdots \dif \theta_1
	\end{align*}
	for any $\rho$ with $r < \rho < R$ (with value of $L_K(f)$ independent of $\rho$), and the series converges absolutely, both locally uniformly on compacts and in $L^1(\mu)$.
	
	Moreover, let $g$ be another entire function satisfying the same hypotheses above with exponential type bounded by $r_g$, let $r_f = r$, assume that $r_f + r_g < R$, and let
	\[
	f(x) = \sum_I a_I \frac{x^{\dia I}}{I!}, \qquad g(x) = \sum_J b_J \frac{x^{\dia J}}{J!}.
	\]
	Then the \emph{Wick product} of $f$ and $g$
	\[
	(f \dia g)(x) \coloneqq \sum_{I, J} a_I b_J \frac{x^{\dia I \sqcup J}}{I! J!}
	\]
	is defined as an absolutely convergent series, both locally uniformly and in $L^1$.
\end{theorem}

The non-vanishing condition on the moment-generating function is less standard to check than exponential integrability. Of course, by continuity, it always holds in a small enough neighbourhood of $0$. For certain classes of distributions, however, it holds in the full domain of analyticity (see also \autoref{prop_suff_cond_second_chaos} below).

\begin{proposition}\label{prop_suff_cond}
Let $\mu$ be an infinitely divisible distribution on $\mathbb{R}^d$ such that $\EuScript{M}(\theta) < \infty$ for $\Re \theta < R$. Then, for $\Re \theta < R$, it also holds that $\EuScript{M}(\theta) \neq 0$.
\end{proposition}

\begin{proof}
By infinite divisibility, there exists a L\'evy process $(X_t)_{t\geq0}$
such that $X_1\sim\mu$; denoting by $\mu_t$ the law of $X_t$,
$(\mu_t)_{t\geq0}$ is a weakly continuous convolution semigroup with
$\mu_0=\delta_0$ and $\mu_1=\mu$ \cite[Theorem~7.10]{Sato13}. Set
\[
\EuScript M_t(\theta)
\coloneqq \int_{\Rd} e^{\theta\cdot x}\,\mu_t(\dif x).
\]
Fix $\theta$ with $\Re\theta<R$, and choose $p>1$ sufficiently close to $1$ so that $p\Re\theta<R$. Since $\mu_{1/n}\Rightarrow\delta_0$, we have
$X_{1/n}\to0$ in probability, and hence $e^{\theta\cdot X_{1/n}}\to 1$ in probability. Moreover, by the semigroup property,
\[
\E\bigl[|e^{\theta\cdot X_{1/n}}|^p\bigr]
=\EuScript M_{1/n}(p\Re\theta)
=\EuScript M_1(p\Re\theta)^{1/n},
\]
which is uniformly bounded in $n$. Hence
$\{e^{\theta\cdot X_{1/n}}\}_{n\geq1}$ is uniformly integrable, and therefore
\(
\EuScript M_{1/n}(\theta) =\E[e^{\theta\cdot X_{1/n}}] \to 1
\).
Thus, for all sufficiently large $n$, $\EuScript M_{1/n}(\theta)\neq0$, and
the semigroup property yields
\[
\EuScript M(\theta)
=\EuScript M_1(\theta)
=\EuScript M_{1/n}(\theta)^n
\neq0.
\qedhere
\]
\end{proof}

The next lemma will be used to bound cumulants jointly in their order and taking into account the \say{smallness} of a random variable, which in the next section will be given by the increments of a stochastic process inside a Riemann-Stieltjes sum. We write $\lesssim_a$ to mean boundedness up to a multiplicative constant which depends on $a$.

\begin{lemma}\label{lem:cumulant_estimate}
	Let $(X,Y) = (X^1,\ldots,X^d,Y)$ be an $\bR^{d+1}$-valued random variable whose law $\mu$ satisfies the hypotheses of \autoref{thm:martin} with $(d+1)$-dimensional polyradius $R = (R_X, R_Y)$ (with $R_X \in (0, +\infty]^d$). Assume that for every $\sigma < R_X$
	\[
	\sup_{\xi \in D_\sigma}\big|\partial_\eta \EuScript{K}(\xi,\eta)|_{\eta = 0} \big| \lesssim_\sigma \varepsilon
	\]
	for some $\varepsilon > 0$, where $\EuScript{K}(\xi,\eta) = \log \M(\xi,\eta)$ (with $\M(\xi,\eta) = \E \exp({\xi \cdot X + \eta Y})$) is the cumulant-generating function of $(X,Y)$, considered for $\xi \in \mathbb C^d$, $\eta \in \mathbb C$. Then for every $\sigma < R_X$
	\[
	|\kappa[X^I, Y]| \lesssim_\sigma I! \sigma^{-I} \varepsilon
	\]
	where $\kappa[X^I, Y] \coloneqq \kappa[X^{i_1},\ldots, X^{i_n},Y]$ for a submultiset $I = \{i_1,\ldots,i_n\}$ of $[d]$.
\end{lemma}

\begin{proof}
	Since $\M$ does not vanish on $D_R$, after choosing a branch of the logarithm, the cumulant-generating function $\EuScript{K}(\xi,\eta) = \log \M(\xi,\eta)$ is defined and holomorphic on $D_R$ and setting $F(\xi) \coloneqq \partial_\eta \EuScript{K}(\xi,\eta)\big|_{\eta = 0}$, we have $\kappa[X^I, Y] = \partial_I F(0)$. Then a Cauchy estimate (similar to the ones performed in \autoref{app:proof}) yields
	\[
	|\kappa[X^I, Y]| = |\partial_I F(0)| \leq I! \sigma^{-I} \sup_{|\xi| \leq \sigma} |F(\xi)| \lesssim_\sigma I! \sigma^{-I}\varepsilon. \qedhere
	\]
\end{proof}

The next proposition will be the main ingredient when taking Riemann-Stieltjes sums of Wick products involving stochastic processes. It proves a conversion formula between the ordinary and Wick products of an entire function $f$ with a linear coordinate function $y$. Notice how, passing to the expectation in our conversion formula, and using that $\E[f(X) \dia Y] = \E[f(X)]\E[Y]$ (corresponding to the summand $I = \varnothing$) recovers the integration-by-parts formula of Barbour \cite[Corollary 1]{barbourAsymp}
\[
\E[f(X)Y] = \sum_{I} \frac{\E \partial_I f(X)}{I!} \kappa[X^I, Y],
\]
which we prove in the multivariate case and without assuming boundedness of derivatives, only exponential type (although the rate of growth of derivatives assumed therein is consistent with that obtained in \autoref{lem:cumulant_estimate}).

\begin{proposition}[Wick product with small linear random variable]\label{thm:wickfxy}
	Let $(X,Y)$ be as in \autoref{lem:cumulant_estimate} with polyradius $R = (R_X, R_Y)$ and bound $\varepsilon$, $f$ be an entire function in $d$ variables as in \autoref{thm:martin} with polyradius $r < R_X$, let $y$ denote the $(d+1)$-th coordinate function in $\mathbb R^{d+1}$. Then the Wick product $f \dia y$ is defined according to \autoref{thm:martin}, and 
	\begin{equation}\label{eq:reverse_series_lin}
	f(x) \dia y = f(x)y - \sum_{I \neq \varnothing} \frac{\partial_I f(x)}{I!} \kappa[X^I, Y]
	\end{equation}
	where the series converges absolutely, both locally uniformly and in $L^1$; more specifically, for any $r < \rho < \sigma < R_X$ there exists a constant $C_{\rho,\sigma}$ with
	\[
	\bigg\|\frac{\partial_I f(X)}{I!} \kappa[X^I, Y]\bigg\|_{L^1}  \leq C_{\rho,\sigma} \bigg(\frac{\rho}{\sigma}\bigg)^I \varepsilon.
	\]
\end{proposition}
\begin{proof}
	We first consider the reverse product formula for two factors, in multiindex notation. Take $p(x) = x^A/A!$ and $q(x) = x^B/B!$. Then by \autoref{thm:reverse} 
	\begin{align*}
		(p \dia q)(x) &= p(x)q(x) + \frac{1}{A!B!}\sum_{K \subsetneq A \sqcup B} x^K \sum_{\pi \in \mathcal P_\mathrm{NF}(A \setminus K, B \setminus K)} (-1)^{|\pi|} \kappa[X]^\pi \\
		&= p(x)q(x) + \frac{1}{A!B!}\sum_{\substack{I \leq A \\ J \leq B \\ I, J \neq \varnothing}} \binom{A}{I} \binom{B}{J} x^{A - I} x^{B - J} \sum_{\pi \in \mathcal P_\mathrm{NF}(I, J)} (-1)^{|\pi|} \kappa[X]^\pi \\
		&= p(x)q(x) + \sum_{I, J \neq \varnothing} \frac{\partial_I p(x) \partial_J q(x)}{I!J!} \sum_{\pi \in \mathcal P_\mathrm{NF}(I, J)} (-1)^{|\pi|} \kappa[X]^\pi .
	\end{align*}
	By linearity, this extends to arbitrary polynomials $p,q$.
	
	Now introduce the $(d+1)$-th variable $y$ and take $g(y) = y$. The formula becomes the one in the statement: every set in the non-flat partition must contain the single variable $y$, and therefore can only have one set (see \autoref{fig:Ij}).
	\begin{figure}[h!]
		\centering 
		\begin{tikzpicture}
			\dotrows{5,1}
			\node[anchor=east] at (-4mm,0) {$I$};
			\node[anchor=east] at (-4mm,-6mm) {$j$};
			\draw[Cerulean] (\dotcoord{1,1}) to (\dotcoord{2,1});
			\draw[Cerulean] (\dotcoord{1,1}) to (\dotcoord{1,3});
		\end{tikzpicture}
		\caption{A diagram with two rows and in which the second row only has one node: there is room for at most a single non-flat edge.}\label{fig:Ij}
	\end{figure}
	By \autoref{thm:martin} $f$ can be expanded in Appell polynomials
	\[
	f(x) = \sum_J a_J x^{\dia J}/J! = \lim_{N \to \infty} \underbrace{\sum_{|J| \leq N} a_J x^{\dia J}/J!}_{\displaystyle\eqqcolon p_N}
	\]
	For each $N \in \mathbb N$ we then have
	\[
	p_N(x) \dia y = p_N(x)y - \sum_{I \neq \varnothing} \frac{\partial_I p_N(x)}{I!} \kappa[X^I, Y]
	\]
	As $N \to \infty$, $p_N \dia y \to f \dia y$ by the definition in \autoref{thm:martin} given that the function $g(y) = y$ has arbitrarily small exponential type. Choosing conjugate exponents $p,q>1$ with $p\sigma<R_X$, exponential integrability gives $Y\in L^q$, while the same Appell-series estimates give $p_N(X)\to f(X)$ in $L^p$, implying $p_N(X)Y\to f(X)Y$ in $L^1$ by H\"older. An additional argument is needed to pass to the limit in the sum over $I$. We have
	\[
	\partial_I p_N(x) = \sum_{|J| \leq N} a_J \partial_I \frac{x^{\dia J}}{J!} = \sum_{|J| \leq N} a_J \frac{x^{\dia (J-I)}}{(J-I)!} = \sum_{|K + I| \leq N} a_{K+I} \frac{x^{\dia K}}{K!}.
	\]
	From the proof of \autoref{thm:martin}, for any $r < \rho <R_X$, $|a_J| \lesssim \rho^J$
	\[
	|\partial_I p_N(x)| \lesssim \rho^I \sum_K \rho^K \bigg| \frac{x^{\dia K}}{K!} \bigg| \lesssim_{x,\rho} \rho^I
	\]
	since $\sum_K \rho^K x^{\dia K}/K! = \exp({x \cdot \rho})/\M(\rho)$ and the series converges absolutely. Combining with \autoref{lem:cumulant_estimate} we have, for any $\rho < \sigma < R$, the bound
	\[
	\bigg| \frac{\partial_I p_N(x)}{I!} \kappa[X^I, Y] \bigg| \lesssim_{x,\sigma,\rho} \bigg(\frac{\rho}{\sigma}\bigg)^I \varepsilon
	\]
	which is uniform in $N$ and summable in $I$. Dominated convergence for series justifies the first identity in
	\[
	\lim_{N \to \infty} \sum_{I \neq \varnothing} \frac{\partial_I p_N(x)}{I!} \kappa[X^I, Y] = \sum_{I \neq \varnothing} \frac{\lim_{N \to \infty}\partial_I p_N(x)}{I!} \kappa[X^I, Y] = \sum_{I \neq \varnothing} \frac{\partial_I f(x)}{I!} \kappa[X^I, Y],
	\]
	and in the second identity we have used that Appell expansions may be differentiated termwise (a consequence of absolute uniform-on-compacts convergence). Finally, the $L^1$ bound follows by a similar application of \autoref{lem:cumulant_estimate} in conjunction with the more explicit estimate \autoref{lem:growth} for the derivatives of $f$, and integrating the exponential.
\end{proof}

We observe that a couple of identities that are well known in the Gaussian case \cite[Corollary 2.7.8, Proposition 1.3.8]{NP} also hold in our non-Gaussian setting.

\begin{corollary}\label{cor_stroock}Let $f = \sum_I a_I x^{\dia I}/I!$ be as in \autoref{thm:martin}.
	\begin{itemize}
		\item The coefficients of the Appell expansion of $f$ are given by averaging the derivatives of $f$:
		\[
		a_K = \mathbb E \partial_K f(X).
		\]
		\item The Heisenberg commutation relationship  $
		[\partial_j,-\dia x^j]=\mathrm{id}$ holds.
	\end{itemize}
\end{corollary}
\begin{proof}
	Differentiating under the summation sign by uniform convergence on compacts, differentiating Appell polynomials and reindexing, we obtain
	\[
	\partial_K \sum_I a_I  \frac{x^{\dia I}}{I!}= \sum_{I \supset K} a_I  \frac{1}{(I \setminus K)!} x^{\dia I \setminus K}=\sum_J a_{J \sqcup K}  \frac{1}{J!} x^{\dia J}.
	\]
	By \autoref{lem:growth} the differentiated series converges in $L^1(\mu)$. Therefore, taking expectation under the summation sign and by the zero-mean property, the only term with $J=\varnothing$ survives, obtaining the statement.
	
	For the second claim, by \autoref{thm:wickfxy}
	\[
	f(x)\dia x^j
	=
	f(x)x^j
	-
	\sum_{I \neq \varnothing}
	\kappa[X^{I\sqcup j}]
	\frac{\partial_I f}{I!}.
	\]
	The conclusion follows from the identities
	\[
	\left[
	\partial_j,
	\sum_{I \neq \varnothing}
	\kappa[X^{I\sqcup j}]
	\frac{\partial_I}{I!}
	\right]
	=0\,, \qquad[\partial_j, - \cdot x^j]=\mathrm{id}. \qedhere
	\]
\end{proof}

\section{Wick integrals of entire one-forms in the Young regime}\label{sec:ints}

Let $X = (X^1,\ldots,X^d) \colon [0,T] \times \Omega \to \mathbb R^d$ be a stochastic process on $[0,T]$. We denote $\triangle_T \coloneqq\{(s,t) \in [0,T] : s < t\}$. Throughout this section we will denote $X_{s,t} = X_t - X_s$ and similar. We assume that $X$ satisfies the following

\begin{conditions}\label{conditions}\ \\[-3ex]
	\begin{enumerate}
		\item\label{item:1} $X$ is a.s.\ continuous and of bounded $p$-variation for some $p<2$;
		\item\label{item:2} Denote the cumulant-generating function of $(X_s, X_t)$ by
		\[
		\K_{s,t}(z, w) \coloneqq \log \M_{s,t}(z,w),\qquad \text{with}\quad\M_{s,t}(z,w) \coloneqq \E e^{z \cdot X_s + w \cdot X_t}.
		\]
		We assume that there exists a polyradius $R > 0$ such that $\K$ is holomorphic (i.e.\ $\M_{s,t}$ defined and non-vanishing) in $D_R \times D_R$, and uniformly $C^1$ w.r.t.\ the time variables in the sense that $\partial_t \K_{s,t}$ exists for $t \in [0,T]$, is continuous in all four variables and
		\[
		\sup_{\substack{s,t \in [0,T] \\ z,w \in D_R}} |\partial_t \K_{s,t}(z,w)| < \infty;
		\]
		\item\label{item:3} For any $t \in (0,T]$, there exists $U \subset \Rd$ open with $\mathbb P[X_t \in U] > 0$ such that $X_t$ has a density on $U$.
	\end{enumerate}
\end{conditions}

\begin{definition}\label{def:wickint}
	Under the conditions above, let $0 \leq s < t \leq T$ and $f = (f_1,\ldots, f_d) \colon \Rd \to \bR^d$ be such that, for all $k$, $f_k$ satisfies the hypotheses in \eqref{eq:exp_growth} w.r.t.\ the polyradius $R$ in \autoref{conditions}. We define the \emph{Wick integral} as the limit, a.s.\ and/or in $L^1$
	\[
	\int_s^t f_k(X_r) \dia \dif X^k_r \coloneqq \lim_{m \to \infty}\sum_{[u,v] \in \pi_m} f_k(X_u) \dia X^k_{u,v} \text{ ,}
	\]
	taken along an arbitrary sequence of partitions $\pi_m$ on $[s,t]$ with vanishing mesh size. Here and below we use the Einstein convention on up-down indices, here over $k$.
\end{definition}

\autoref{conditions}\autoref{item:3} is simply in place to guarantee that the Wick products $f_k(X_u) \dia X^k_{u,v}$ are well defined as a product on random variables, as explained in \autoref{rem:wd}; no density requirement is necessary for the linear second factor, since if $X^k_{u,v}$ vanishes the Wick product does too.

\begin{remark}
	This integral has already been studied in detail in the case in which $X$ is a Gaussian process, see e.g. \cite{D99, DHP00, CCM03, alos} and many others. In that case, it can be defined for much more general (path-dependent, anticipative) integrands, and agrees with the Skorokhod integral, i.e.\ the divergence operator in Malliavin calculus, adjoint to the Malliavin derivative. It can also be defined in rougher regimes we do not consider here, e.g.\ fractional Brownian motion with $H \leq \frac 12$, see e.g.\ \cite{NT06} who define it using the same Riemann-Stieltjes sum approximations as \autoref{def:wickint}. We compute the CGF of the $2$-time marginals of $H$-fBm:
	\begin{align*}
		\K_{s,t}(z,w) &= \frac 12 z^2 s^{2H} + \frac 12 w^2 t^{2H} + \frac 12 zw \big( s^{2H} + t^{2H} - |t-s|^{2H} \big) \\
		\partial_t\K_{s,t}(z,w) &= H w^2 t^{2H - 1} + Hzw \big( t^{2H-1} - \mathrm{sgn}(t-s)|t-s|^{2H-1} \big).
	\end{align*}
For $H > \frac 12$ it is clear from this that \autoref{conditions} hold. 
\end{remark}

\begin{remark}\label{rem:notexp}
We comment that many of the identities in this and subsequent sections hold for polynomials under less restrictive conditions on $X$ than those in \autoref{conditions} (e.g.\ no radius of analyticity of $\M$ is required). We do not comment further on this, and instead restrict to measure that allow treatment of the theory for our more general class of integrands.
\end{remark}

For a tuple $(i_1, \ldots, i_n) \in [d]^n$ with $I = \{i_1, \ldots, i_n\}$ the associated multiset, we will define
\begin{equation}\label{eq:kappanotation}
\begin{split}
	\kappa^{i_1 \ldots i_n}(u_1,\ldots,u_n) &\coloneqq \kappa[X^{i_1}_{u_1},\ldots,X^{i_n}_{u_n}], \quad \kappa^{I,j}(u, \ldots, u, \dif u) \coloneqq \partial_{n+1}\kappa^{i_1 \ldots i_n j}(u,\ldots, u, u) \dif u,\\
\kappa^I(\dif u) &\coloneqq \frac{\dif}{\dif u} \kappa^I(u)\dif u, \qquad \kappa^I(u) \coloneqq \kappa[X^{i_1}_u,\ldots, X^{i_n}_u].
\end{split}
\end{equation}
where the second notation is justified by the fact that $\partial_{n+1}\kappa^{i_1 \ldots i_n j}(u,\ldots, u, u)$ is symmetric in $i_1,\ldots,i_n$. The following can be considered the main theorem of this paper.

\begin{theorem}[Wick integral and It\^o-Stratonovich formula]\label{thm:itostrat}\ \\
	Under the stated hypotheses on $X$ and $f$, the limit in \autoref{def:wickint} exists a.s.\ and for $(s,t) \in \triangle_T$
	\begin{equation}\label{eq:itostrat}
		\int_s^t f_j(X_u) \dia \dif X^j_u = \int_s^t f_j(X_u) \dif X^j_u - \sum_{I \neq \varnothing} \frac{1}{I !} \int_s^t \partial_I f_j(X_u) \kappa^{Ij}(u,\ldots, u, \dif u) \text{ ,}
	\end{equation}
	where the first integral is Young and the second is Riemann-Stieltjes according to \eqref{eq:kappanotation}, and the series is absolutely summable in $L^1$. Moreover if $e^{\sigma \|X\|_\infty} \in L^1$ for some $\sigma > r$ and if $\| X \|_{p\text{-}\mathrm{var}}$ has moments of all orders, then the integral exists as a limit in $L^1$. If additionally $\E X \equiv 0$
	\begin{align*}
		\E \int_s^t f_j(X) \dia \dif X^j &= 0, \\
		\mathbb E \int_s^t f_j(X) \dif X^j &= \sum_{I \neq \varnothing}\frac{1}{I !} \int_s^t \mathbb E[\partial_I f_j(X_u)] \kappa^{I,j}(u,\ldots, u, \dif u) \text{ .}
	\end{align*}
\end{theorem}

\begin{proof}
	Let $\widetilde\K_{u,v}(\xi,\eta) \coloneqq \K_{u,v}(\xi - \eta e_j, \eta e_j)$ be the cumulant generating function of $(X_u, X_{u,v}^j)$. Since $\widetilde \K_{u,u}(\xi,\eta) = \K_{u,u}(\xi-\eta e_j,\eta e_j) = \K_u(\xi)$, we have $\partial_\eta \widetilde \K_{u,u}(\xi,0)=0$. Hence, by the $C^1$-regularity in $v$ and a Cauchy estimate in the $w_j$-variable on a smaller polydisc, the hypotheses of \autoref{thm:wickfxy} are satisfied with $\varepsilon = v - u$ (so that \autoref{lem:cumulant_estimate} is applicable), and thus
	\[
	\sum_{[u,v] \in \pi_m} f_j(X_u) \dia X^j_{u,v} = \sum_{[u,v] \in \pi_m} f_j(X_u) X^j_{u,v} - \sum_{[u,v] \in \pi_m} \sum_{I \neq \varnothing} \frac{\partial_I f_j(X_u)}{I!} \kappa^{I,j}(u,\ldots,u, \Delta(u,v))
	\]
	where $\kappa^{I,j}(u,\ldots,u, \Delta(u,v)) = \kappa^{I,j}(u,\ldots,u, v) - \kappa^{I,j}(u,\ldots,u, u)$ and for each $[u,v] \in \pi_m$ the series is absolutely summable in $L^1$. We show convergence of the two summands separately. For the first, by \autoref{conditions}\autoref{item:1} and retracing the proof of existence of the Young integral\cite{young} (see \cite[Theorem 1.16]{LCL07}), we have
	\begin{align*}
		\bigg|\int_s^t f_j(X) \dif X^j - \sum_{[u,v] \in \pi_m} f_j(X_u) X^j_{u,v} \bigg|
		&\leq \sum_{[u,v] \in \pi_m}  \bigg| \int_u^v f_j(X) \dif X^j - f_j(X_u) X^j_{u,v} \bigg| \\
		&\lesssim_p \sum_{[u,v] \in \pi_m}\| f(X) \|_{p\text{-}\mathrm{var},[u,v]}  \| X \|_{p\text{-}\mathrm{var},[u,v]} \\
		&\leq \| \D f\|_{\infty, B(0,\max X)} \max_{[u,v] \in \pi_m} \| X \|_{p\text{-}\mathrm{var},[u,v]}^{2-p} \sum_{[u,v] \in \pi_m}  \| X \|_{p\text{-}\mathrm{var},[u,v]}^p \\
		&\leq \| \D f\|_{\infty, B(0,\max X)}  \| X \|_{p\text{-}\mathrm{var},[0,T]}^p \max_{[u,v] \in \pi_m} \| X \|_{p\text{-}\mathrm{var},[u,v]}^{2-p},
	\end{align*}
	and the maximum vanishes in the limit since $(u,v) \mapsto \| X \|_{p\text{-}\mathrm{var},[u,v]}$ is continuous. Moreover, the right hand side is bounded by $\|\D f\|_{\infty, B(0,\|X\|_\infty)} \| X \|^2_{p\text{-}\mathrm{var},[0,T]}$: $L^1$ convergence will follow if we show this quantity is integrable. Indeed, if $e^{\sigma \|X\|_\infty} \in L^1$ for some $\sigma > r$ and $\| X \|_{p\text{-}\mathrm{var}} \in \bigcap_{s \geq 1} L^s$ implies this by the fact that $\D f$ has the same exponential type as $f$, and the H\"older inequality, by setting $\sigma =qr$ with $q > 1$.
	
	For the second term, we have a.s.\ for each fixed $I$
	\begin{equation}\label{eq:RSlimitI}
		\sum_{[u,v] \in \pi_m} \frac{\partial_I f_j(X_u)}{I!} \kappa^{I,j}(u,\ldots,u, \Delta(u,v)) \xrightarrow{m \to \infty} \frac{1}{I !} \int_s^t \partial_I f_j(X_u) \kappa^{I,j}(u,\ldots, u, \dif u)
	\end{equation}
	by $C^1$ regularity of $(u,v) \mapsto \kappa^{I,j}(u,\ldots,u, v)$
	\[
	\partial_{n+1}\kappa^{i_1 \ldots i_n j}(u,\ldots,u,u) = \partial_v \partial_{z_I} \partial_{w_j} \K_{u,v}(0,0) = \partial_{z_I} \partial_{w_j} \partial_v \K_{u,v}(z,w)\big|_{v = u; z = 0 = w},
	\]
	since \autoref{conditions}\autoref{item:2} implies $(u,v) \mapsto \K_{u,v}$ is a $C^1$ map into the space of holomorphic functions under the topology of local uniform convergence, and therefore it is possible to pass to the derivative in the $v$ parameter inside Cauchy's integral formula. Then by \autoref{lem:cumulant_estimate} in conjunction with \autoref{conditions} and \autoref{lem:growth}, picking $q > 1$ small enough so that $qr < R$ and for some $a < 1$ (entrywise), by Jensen we have
	\begin{align*}
		\bigg| \frac{1}{I!} \partial_I f_j(X_u) \kappa^{I,j}(u,\ldots,u, \Delta(u,v))  \bigg| &\lesssim_a e^{r|X_u|} a^I (v-u) \\
		\implies \E\bigg| \sum_{[u,v] \in \pi_m} \frac{1}{I!} \partial_I f_j(X_u) \kappa^{I,j}(u,\ldots,u, \Delta(u,v))  \bigg|^q &\lesssim_a (t-s)^{q-1} a^I \sum_{[u,v] \in \pi_m} \E e^{qr|X_u|} (v-u) \\
		&\leq (t-s)^q a^I \sup_{u \in [0,T]} \E e^{qr|X_u|} 
	\end{align*}
	which is finite by \autoref{conditions}\autoref{item:2}. Boundedness in $L^q$ implies uniform integrability of the Riemann-Stieltjes sums of a fixed-$I$ summand and we conclude that the limit \eqref{eq:RSlimitI} is in fact in $L^1$. Note that by Fatou this also implies the $I$-integral is bounded in $L^1$ by the same bound. Then, exchanging sums thanks to absolute $L^1$-summability
	\begin{align*}
		&\bigg\| \sum_{[u,v] \in \pi_m}\sum_{I\neq\varnothing} \frac{\partial_I f(X_u)}{I!} \kappa^{I,j}(u,\ldots,u, \Delta(u,v)) - \sum_{I \neq \varnothing} \frac{1}{I !} \int_s^t \partial_I f_j(X_u) \kappa^{I,j}(u,\ldots, u, \dif u) \bigg\|_{L^1} \\
		\leq{}&\sum_{I \neq \varnothing} \bigg\| \sum_{[u,v] \in \pi_m} \frac{\partial_I f(X_u)}{I!} \kappa^{I,j}(u,\ldots,u, \Delta(u,v)) - \frac{1}{I !} \int_s^t \partial_I f_j(X_u) \kappa^{I,j}(u,\ldots, u, \dif u) \bigg\|_{L^1} \\
		\lesssim_a{} &\sum_{I \neq \varnothing}a^I < \infty
	\end{align*}
	with each $I$-summand convergent to $0$. We can therefore conclude by dominated convergence.
	
	By the zero-mean property of Appell polynomials, $\sum_{[u,v] \in \pi_m} f_j(X_u) \dia X^j_{u,v}$ vanish in mean as long as $X$ does, and so does their $L^1$-limit. To pass to the limit inside the expectation we use Fubini, which is justified by
	\[
	\sum_{I \neq \varnothing} \frac{1}{I!} \int_s^t  \E | \partial_I f_j(X_u) | |\kappa^{I,j}|(u,\ldots, u, \dif u) \lesssim_a \sum_{I \neq \varnothing} a^I \sup_u \E e^{r|X_u|} 
	\]
	where we have used that
	\begin{align*}
		|\partial_{n+1} \kappa(u,\ldots,u, u)| = \lim_{v \searrow u}  \bigg|\frac{ \kappa(u,\ldots,u, \Delta(u,v))}{v-u}\bigg|
	\end{align*}
	thanks to \autoref{conditions}\autoref{item:2} and subsequently applying \autoref{lem:cumulant_estimate} once again.
\end{proof}

\begin{remark}[Non-centred case]\label{rem:non-centred}
	The formulae in this section all work in the non-centred case as well. In order to obtain a centred integral in that case, set $\mu^j(t) \coloneqq \kappa^j(t) = \E X^j_t$ (assumed $C^1$) and $\widetilde X{}^j \coloneqq X^{\dia j}_t = X^j_t - \mu^j(t)$, so it holds that
	\[
	\int_s^t f_j(X) \dia \dif X^j = \int_s^t f_j(X) \dia \dif \widetilde X{}^j + \int_s^t f_j(X_u) \mu^j(\dif u)
	\]
	with $\int\! f(X) \dia \dif \widetilde X$ centred. In this case $f(X_t) = f(\mu(t) + \widetilde X_t)$ can be viewed as a time-dependent function of the new variables $\widetilde X_t$, see \eqref{rem:timeito} below; note that the linear change of variables does not affect the exponential type of $f$. The centred Wick integral therefore admits a similar expression to \eqref{eq:itostrat} but including $I = \varnothing$ in the sum.
\end{remark}

By making the integrand exact, we can leverage symmetry to obtain a change-of-variables formula analogous to the It\^o formula, with higher-order corrections, corresponding to the infinitely many non-zero cumulants (if $X$ is non-Gaussian). Recall the notation \eqref{eq:kappanotation}.
\begin{theorem}[It\^o series formula]\label{thm:ito}
	Under the same hypotheses on $X$ and $f\colon \mathbb{R}^d \to \mathbb{R}$ as in \autoref{thm:itostrat},
	\[
	f(X_t)-f(X_s) = \int_s^t \D f(X) \dia \dif X + \sum_{|I| \geq 2} \frac{1}{I!} \int_s^t \partial_I f(X) \kappa^I(\dif u) \text{ .}
	\]
\end{theorem}
\begin{proof}
	By the previous result,
	\begin{align*}
		\int_s^t \partial_j f(X) \dia \dif X^j &= \int_s^t \partial_j f(X) \dif X^j - \sum_{I \neq \varnothing} \frac{1}{I !} \int_s^t \partial_{Ij} f(X_u) \kappa^{I j}(u,\ldots, u, \dif u) \text{ .}
	\end{align*}
	The first summand on the right is equal to $f(X_t) - f(X_s)$ by the ordinary change-of-variables formula for Young integrals. Now, for a multiset $H$
	\[
	\frac{\dif}{\dif u} \kappa^H(u) = \sum_{i = 1}^{|H|} \partial_i \kappa^H(u,\ldots,u) \text{ .}
	\]
	Therefore, letting $H$ correspond to the multi-index $(H_1,\ldots,H_d)$,
	\begin{align*}
		&\sum_{I \neq \varnothing} \frac{1}{I !} \int_s^t \partial_{Ij} f(X_u) \kappa^{I j}(u,\ldots, u, \dif u) \\
		={}&\sum_{|H| \geq 2}\sum_{\substack{j = 1 \\ j \in H}}^d \frac{1}{(H \setminus j)!}\int_s^t \partial_H f(X_u) \kappa^{H}(u,\ldots, u, \dif u) \\
		={}& \sum_{|H| \geq 2} \frac{1}{H!}\Big(\sum_{\substack{j = 1 \\ j \in H}}^d H_j\Big)\int_s^t \partial_H f(X_u) \frac{1}{|H|} \sum_{i = 1}^{|H|} \kappa^{H}(u,\ldots, \dif u, \ldots , u) \\
		={}& \sum_{|H| \geq 2}\frac{1}{H!}\int_s^t \partial_H f(X_u) \kappa^{H}(\dif u).\qedhere
	\end{align*}
\end{proof}

\begin{remark}[It\^o formula with time dependence]\label{rem:timeito}
	The It\^o-Stratonovich formula \autoref{thm:itostrat} is unaffected if the integrand $f(X_u, u)$ is time-dependent, as long as it is jointly continuous with $f( \, \cdot \, , u)$ satisfying the hypotheses of the theorem uniformly in $u$, with a common exponential type. Assuming additionally that $f$ is smooth in time,
	\[
	f(X_u, u)\big|_{u = s}^t = \int_s^t \D_x f(X_u, u) \dif X_u + \int_s^t \partial_u f(X_u, u) \dif u
	\]
	and thus the above proof can immediately be readapted to imply
	\[
	f(X)_{s,t} = \int_s^t \D_x f(X_u,u) \dia \dif X + \int_s^t \partial_u f(X_u, u) \dif u + \sum_{|I| \geq 2} \frac{1}{I!} \int_s^t \partial_I f(X) \kappa^I(\dif u) \text{ .}
	\]
	Note that if $X$ is non-centred \emph{and} $f$ is time dependent, the Wick integral in the It\^o formula above can be further expressed as sum of two terms, one of which centred, as explained in \autoref{rem:non-centred}, and the other which can be incorporated in the sum by letting it start at $|I| = 1$.
\end{remark}

We note how the formula simplifies to the well-known It\^o formula for centred Gaussian processes, thanks to the vanishing of cumulants of order other than $2$. The following result is immediate, and in the literature on Malliavin calculus and white noise analysis, it has been extended to much more general functions than those considered here (indeed path-dependent and even anticipative).

\begin{corollary}[The Gaussian case]
	If $X$ is a centred Gaussian process (still satisfying \autoref{conditions})
	\begin{align*}
		\int_s^t f_j(X) \dia \dif X^j &= \int_s^t f_j(X) \dif X^j - \int_s^t \partial_i f_j(X_u) \kappa^{i j}(u,\dif u) \\
		f(X)_{s,t} &= \int_s^t \partial_j f(X_u) \dia \dif X^j_u + \frac{1}{2} \int_s^t \partial_{i j} f(X_u) \kappa^{i j}(\dif u) \text{ .}
	\end{align*}
\end{corollary}

\autoref{thm:itostrat} may sometimes be simplified in a similar way even for non-exact polynomial integrands, thanks to symmetries in the law of $X$. Recall that a random vector is called exchangeable if its law is invariant under permutation of the components; for a Gaussian vector this is easily seen to happen if and only if there is a common value for the variances and a common value for the off-diagonal covariances. The proof of the following result is similar to that of the previous ones, and omitted.

\begin{corollary}[Symmetries in the It\^o-Stratonovich formula]\label{eq:symmetries}\ \\
	If the components of $X$ are independent, \autoref{thm:itostrat} simplifies to
	\[
	\int_s^t f_j(X) \dia \dif X^j = \int_s^t f_j(X) \dif X^j - \sum_{n \geq 2} \frac{1}{n!} \int_s^t \partial_{j^{n-1}} f_j(X_u) \kappa_n^j(\dif u)
	\]
	where $j^k$ is the multiset containing $k$ copies of $j$ and $\kappa_n^j(u) \coloneqq \kappa^{j^n}(u)$.
	
	If $X$ is a Gaussian process with exchangeable components with variance function $\kappa^{ii}(u) = \sigma^2(u)$ and off-diagonal covariance function $\kappa^{ij}(u) = \rho(u)$ (any $i \neq j$), then
	\[
	\int_s^t f_j(X) \dia \dif X^j = \int_s^t f_j(X) \dif X^j - \frac 12 \int_s^t \Big(\sum_{i \neq j}\partial_i  f_j(X_u) \Big) \rho(\dif u) - \frac 12 \int_s^t \Big( \sum_j \partial_j f_j(X_u) \Big) \sigma^2(\dif u) \text{ .}
	\]
\end{corollary}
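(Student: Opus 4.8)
The plan is to start from the It\^o--Stratonovich identity \eqref{eq:itostrat} of \autoref{thm:itostrat} and specialise the correction sum to the stated hypotheses on the law of $X$, exactly in the spirit of the proof of \autoref{thm:ito}; the existence of the limit and the mode of convergence are inherited from \autoref{thm:itostrat}, since we only reorganise terms.

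For the first part, assume the components of $X$ are independent. Then for any multiset $\bolda = \alpha_1\ldots\alpha_m$ and any $\beta$, the joint cumulant $\kappa[X^{\alpha_1}_u,\ldots,X^{\alpha_m}_u,X^\beta_t]$ vanishes unless $\alpha_1 = \cdots = \alpha_m = \beta$: indeed, if two distinct component indices appear among $\alpha_1,\ldots,\alpha_m,\beta$, then that family of random variables splits into two nonempty mutually independent subfamilies, and a joint cumulant of such a family vanishes. Hence $\kappa^{\bolda\beta}\equiv 0$ unless $\bolda = \beta^{n-1}$ for some $n\ge 2$, so for each (Einstein-summed) $\beta$ only the multisets $\bolda = \beta^{n-1}$, $n\ge 2$, with $\bolda! = (n-1)!$, contribute to the sum in \eqref{eq:itostrat}. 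It remains to rewrite $\kappa^{\beta^{n-1}\beta}(u,\ldots,u,\dif u)$ in terms of $\kappa_n^\beta(\dif u)$. As in the proof of \autoref{thm:ito}, $\tfrac{\dif}{\dif u}\kappa_n^\beta(u) = \sum_{i=1}^n \partial_i\kappa^{\beta^n}(u,\ldots,u)$, and by symmetry of $\kappa^{\beta^n}$ in its $n$ arguments each summand equals $\partial_n\kappa^{\beta^n}(u,\ldots,u)$, so $\kappa^{\beta^{n-1}\beta}(u,\ldots,u,\dif u) = \tfrac1n\kappa_n^\beta(\dif u)$. Since $\tfrac{1}{(n-1)!}\cdot\tfrac1n = \tfrac1{n!}$, the first displayed formula follows.

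For the second part, assume $X$ is Gaussian. Then all cumulants of order $\neq 2$ vanish, so only the multisets $\bolda$ with $|\bolda| = 1$ contribute to \eqref{eq:itostrat}; this is exactly the Gaussian corollary stated just above. Exchangeability of the components means $(X^\alpha,X^\beta)$ and $(X^\beta,X^\alpha)$ have the same law for all $\alpha,\beta$, whence $\kappa^{\alpha\beta}(u_1,u_2) = \kappa[X^\alpha_{u_1},X^\beta_{u_2}] = \kappa[X^\beta_{u_1},X^\alpha_{u_2}] = \kappa^{\alpha\beta}(u_2,u_1)$, i.e.\ $\kappa^{\alpha\beta}$ is symmetric in its two arguments, with $\kappa^{\alpha\beta}(u) = \rho(u)$ for $\alpha\neq\beta$ and $\kappa^{\beta\beta}(u) = \sigma^2(u)$. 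The same chain-rule argument gives $\tfrac{\dif}{\dif u}\kappa^{\alpha\beta}(u) = 2\partial_2\kappa^{\alpha\beta}(u,u)$, so $\kappa^{\alpha\beta}(u,\dif u) = \tfrac12\rho(\dif u)$ for $\alpha\neq\beta$ and $\kappa^{\beta\beta}(u,\dif u) = \tfrac12\sigma^2(\dif u)$. Splitting $\sum_{\alpha=1}^d$ in the Gaussian corollary into the terms $\alpha\neq\beta$ and $\alpha=\beta$ yields the claim.

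The only point requiring care is the bookkeeping of the combinatorial factors, i.e.\ the passage from the partially evaluated cumulant functions $\kappa^{\bolda\beta}(u,\ldots,u,\dif u)$ to the fully diagonal ones $\kappa_n^\beta(\dif u)$, $\rho(\dif u)$, $\sigma^2(\dif u)$; this step is routine once symmetry of the cumulant functions in their arguments is used, and is identical in structure to the corresponding computation in the proof of \autoref{thm:ito}. No new analytic input is needed.
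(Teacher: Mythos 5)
Your proof is correct and follows exactly the route the paper intends for this corollary (whose proof it omits as \say{similar to that of the previous ones}): specialise the correction sum of \autoref{thm:itostrat} using the vanishing of mixed cumulants (independence, respectively Gaussianity), then convert the partially evaluated cumulant derivatives $\kappa^{\bolda\beta}(u,\ldots,u,\dif u)$ into the diagonal ones via the same symmetry-plus-chain-rule computation used in the proof of \autoref{thm:ito}, with the factors $\tfrac{1}{(n-1)!}\cdot\tfrac1n=\tfrac1{n!}$ and $\tfrac12$ accounted for correctly. The one interpretive point---reading \say{exchangeable components} at the level of the process, so that $\kappa^{\alpha\beta}(u_1,u_2)$ is symmetric in its time arguments---is handled correctly and is indeed what the stated formula requires.
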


We record, for ease of reference, the formulae in the one-dimensional case, along with an identity for integrals of Wick powers which extends the well-known corresponding identity for Hermite polynomials in the Gaussian case.
\begin{proposition}[The scalar case]\label{prop:1d_Ito}
	If $X$ is one-dimensional
	\begin{equation*}
		\begin{split}
			\int_s^t f(X_u) \dia \dif X_u &= \int_s^t f(X_u) \dif X_u - \sum_{n = 2}^\infty \frac{1}{n!} \int_s^t f^{(n-1)}(X_u) \kappa_n(\dif u)\\
			f(X)_{s,t} &= \int_s^t f'(X_u) \dia \dif X_u + \sum_{n = 2}^\infty \frac{1}{n!} \int_s^t f^{(n)}(X_u) \kappa_n(\dif u)
		\end{split}
	\end{equation*}
	Furthermore, if $X$ is centred, for $n \geq 0$ (assuming the shifted process $(X_{s,u})_{u \geq s}$ satisfies \autoref{conditions})
	\begin{equation*}
		\int_s^t X^{\dia n} \dia \dif X = \frac{(X^{\dia (n+1)})_{s,t}}{n+1}, \qquad \int_s^t (X_{s,u})^{\dia n} \dia \dif X =  \frac{(X_{s,t})^{\dia (n+1)}}{n+1}\,\text{ .}
	\end{equation*}
\end{proposition}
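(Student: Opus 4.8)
The plan is to obtain the first two displays as the one-dimensional specialisations of \autoref{thm:itostrat} and \autoref{thm:ito}, then derive the Wick-power identities from them. In dimension $d = 1$ there is a single index $\beta = 1$, a multiset $\bolda$ of size $n$ is just "$n$ copies of $1$", so $\bolda! = n!$, $\partial_\bolda = \tfrac{\dif^n}{\dif x^n}$, and $\kappa^{\bolda\beta}(u,\ldots,u,\dif u) = \kappa_{n+1}(\dif u)$ in the notation $\kappa_m(u) \coloneqq \kappa^{1\ldots1}(u)$ ($m$ copies). Substituting these into \eqref{eq:itostrat} gives the first display after reindexing $n \mapsto n-1$ (the sum over $\bolda \neq \varnothing$ of size $n-1$ becomes a sum over $n \geq 2$, contributing $p^{(n-1)}$ against $\kappa_n(\dif u)$); likewise \autoref{thm:ito} with $p$ in place of $\partial_\beta p$-type bookkeeping yields the second display, the correction terms running over $|\boldc| \geq 2$, i.e.\ $n \geq 2$, with $\tfrac{1}{\boldc!} = \tfrac{1}{n!}$ and $\partial_\boldc p = p^{(n)}$. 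This part is purely a matter of unwinding the multi-index notation and carries no real content.

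For the Wick-power identities, assume $X$ centred and apply the scalar It\^o formula to $p(x) = x^{\dia(n+1)}$. Using the zero-mean Appell property \eqref{eq_zero_average}, in the one-dimensional case $\partial_1 x^{\dia m} = m\, x^{\dia(m-1)}$, so $p'(x) = (n+1)x^{\dia n}$ and more generally $p^{(k)}(x) = \tfrac{(n+1)!}{(n+1-k)!} x^{\dia(n+1-k)}$ for $k \leq n+1$ and $0$ otherwise. Plugging into the It\^o formula,
\[
(X^{\dia(n+1)})_{s,t} = (n+1)\int_s^t X^{\dia n}_u \dia \dif X_u + \sum_{k \geq 2} \frac{1}{k!} \frac{(n+1)!}{(n+1-k)!} \int_s^t X^{\dia(n+1-k)}_u \kappa_k(\dif u) \text{ .}
\]
The point is that every correction term should vanish. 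I expect the cleanest justification to be that the correction integrals already vanish at the level of the Riemann-Stieltjes sums in \autoref{thm:itostrat}: the proof of that theorem shows the bias comes from $\kappa[X_u^{\alpha_1},\ldots,X_u^{\alpha_m},X_{u,v}^\beta]$, and for $X$ a generic centred process this is genuinely nonzero, so the vanishing cannot be term-by-term in general. The correct mechanism is different: when the integrand is exactly $X^{\dia n}$, the Wick sum $\sum X^{\dia n}_u \dia X_{u,v}$ telescopes. Concretely, $X^{\dia n}_u \dia X_{u,v} = X_u^{\dia n} \dia (X_v - X_u) = X^{\dia n}_u \dia X_v - X^{\dia(n+1)}_u$, but this is not yet telescoping because of the mixed term $X_u^{\dia n}\dia X_v$. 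The honest route is instead: rather than trying to kill the correction terms directly, observe that the second identity (with increments $X_{s,u}$) is the primitive statement — define $Z_u \coloneqq X_{s,u}$, note $Z$ is centred with $Z_s = 0$ and the same higher cumulant structure shifted, so $\kappa_n^Z$ and $\kappa_n^X$ have the same increments, apply the first Wick-power identity to $Z$, and recognise $\int_s^t Z_u^{\dia n}\dia \dif Z = \int_s^t (X_{s,u})^{\dia n}\dia\dif X$ and $(Z^{\dia(n+1)})_{s,t} = (Z_t)^{\dia(n+1)} - (Z_s)^{\dia(n+1)} = (X_{s,t})^{\dia(n+1)}$ since $Z_s = 0$.

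So the real work is the first Wick-power identity $\int_s^t X^{\dia n}\dia\dif X = \tfrac{1}{n+1}(X^{\dia(n+1)})_{s,t}$, and the main obstacle is showing the correction sum in the displayed It\^o expansion above actually vanishes. I would argue this by induction on $n$: the case $n = 0$ is $\int_s^t \dia\dif X = \int_s^t \dif X = X_{s,t}$, trivially. For the inductive step, in the correction sum the term with index $k$ involves $\int_s^t X^{\dia(n+1-k)}_u \kappa_k(\dif u)$; one rewrites each such Young/Stieltjes integral using the inductive hypothesis applied to lower Wick powers together with an integration-by-parts identity relating $\int X^{\dia(n+1-k)}\kappa_k(\dif u)$ back to a Wick integral, and checks that the resulting alternating combination of binomial-type coefficients collapses. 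An alternative, possibly shorter, obstruction-free argument: verify that $\tfrac{1}{n+1}(X^{\dia(n+1)})_{s,t}$ satisfies \emph{the same} defining Riemann–Stieltjes-Wick limit as $\int_s^t X^{\dia n}\dia\dif X$ by a direct telescoping estimate on $\sum_{[u,v]\in\pi_n}\big(X_u^{\dia n}\dia X_{u,v} - \tfrac{1}{n+1}(X_v^{\dia(n+1)} - X_u^{\dia(n+1)})\big)$, showing the summand is $o(v-u)$ uniformly using the product formula \autoref{prop:prod} to expand $X_v^{\dia(n+1)}$ around $X_u$ in powers of $X_{u,v}$ and bounded $p$-variation with $p < 2$ to control the remainder; this is likely the approach I would actually pursue, as it sidesteps the combinatorial cancellation entirely and mirrors the structure of the proof of \autoref{thm:itostrat}.
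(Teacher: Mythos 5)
Your reduction of the first two displays to \autoref{thm:itostrat} and \autoref{thm:ito} is fine (the paper gets them from \autoref{eq:symmetries}, which is the same unwinding), and deducing the second Wick-power identity from the first via the shifted process $Z_u = X_{s,u}$ is exactly what the paper does. The gap is the first Wick-power identity, which is where all the content lies. Your starting display is not a legitimate application of \autoref{thm:ito}: $x^{\dia(n+1)}$ is not a fixed polynomial, since its coefficients are the cumulants of $X_u$ and hence depend on $u$; writing $X_u^{\dia(n+1)} = w_{n+1}(u,X_u)$, the theorem cannot be applied with ``$p = x^{\dia(n+1)}$''. Indeed your displayed expansion is already false for $n=1$: it would give $(X^{\dia 2})_{s,t} = 2\int_s^t X_u \dia \dif X_u + \kappa_2(t)-\kappa_2(s)$, whereas $(X^{\dia 2})_{s,t} = (X^2)_{s,t} - \big(\kappa_2(t)-\kappa_2(s)\big) = 2\int_s^t X_u \dia \dif X_u$. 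You then rightly observe that the correction terms cannot vanish term by term, but neither fallback you sketch (induction with an unspecified integration-by-parts identity, or a Riemann--Stieltjes--Wick telescoping estimate) is actually carried out, and both still lack the ingredient that produces the cancellation.

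That missing ingredient is the time-dependence of the Appell coefficients, and it is the heart of the paper's proof: from the generating-series characterisation in \autoref{prop:wick} one computes
\[
\partial_u w_{n+1}(u,x) = -\sum_{j=2}^{n+1}\frac{1}{j!}\,\partial_x^j w_{n+1}(u,x)\,\partial_u\kappa_j(u) \text{ ,}
\]
and then applies the It\^o formula with the deterministic time component adjoined (\autoref{rem:non-centred}); the resulting drift term $\int_s^t \partial_u w_{n+1}(u,X_u)\,\dif u$ cancels the cumulant correction sum exactly, leaving $(X^{\dia(n+1)})_{s,t} = (n+1)\int_s^t X_u^{\dia n}\dia\dif X_u$. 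Your preferred telescoping route would meet the same issue in disguise: expanding $X_v^{\dia(n+1)} = w_{n+1}(v, X_u + X_{u,v})$ requires a Taylor expansion in the time slot as well as in $x$ (\autoref{prop:prod} only handles products of Appell polynomials at a single time), and the time increment of the coefficients is of order $v-u$, not $o(v-u)$, so it must be cancelled against the cumulant terms rather than neglected. Without identifying this time-derivative identity for Appell polynomials, the proof of the Wick-power identity is missing.
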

\begin{proof}
	The first two are immediate corollaries of \autoref{eq:symmetries}. Let $w_n(t,x)$ denote the $n^\text{th}$ Appell polynomial of $X_t$, i.e.\ $w_n(t, X_t) = X_t^{\dia n}$. Then $\partial^j_x w_n(t,x) = \frac{n!}{(n-j)!} w_{n-j}(t,x)$ and letting $\EuScript{K}(t,\theta) = \sum_{n = 1}^\infty \kappa_n(t)\frac{\theta^n}{n!}$ be the cumulant-generating function of $X_t$, by \autoref{prop:wick}
	\begin{align*}
		\partial_t w_n(t, x) &= \partial_t \partial_\theta^n \exp(\theta x - \EuScript{K}(t, \theta))|_{\theta = 0} = -\partial_\theta^n \big[ \exp(\theta x - \EuScript{K}(t, \theta)) \partial_t \EuScript{K}(t, \theta)\big]_{\theta = 0} \\
		&= -\sum_{j = 0}^n \binom{n}{j} \partial^{n-j}_\theta \exp(\theta x - \EuScript{K}(t, \theta)) \partial^j_\theta \partial_t \EuScript{K}(t,\theta)|_{\theta = 0} \\
		&= -\sum_{j = 0}^n \binom{n}{j} w_{n-j}(t, x) \partial_t \kappa_j(t) = -\sum_{j = 2}^n \frac{1}{j!} \partial_x^j w_n(t, x) \partial_t \kappa_j(t)
	\end{align*}
	Then by the It\^o formula with time dependence \autoref{rem:timeito}
	\begin{align*}
		&X^{\dia(n+1)}_t - X^{\dia(n+1)}_s \\
		={} &(n+1) \int_s^t w_n(u,X_u) \dia \dif X_u + \int_s^t \partial_u w_{n+1}(u,X_u) \dif u + \sum_{j = 2}^{n+1}  \frac{1}{j!} \int_s^t \partial_x^j w_{n+1}(u, X_u) \partial_u \kappa_j(u) \dif u \\
		={} &(n+1) \int_s^t X_u^{\dia n} \dia \dif X_u \text{ .}
	\end{align*}
	The second identity follows from the first, applied to the shifted stochastic process.
\end{proof}

\begin{example}\label{1d_exp_int}
	We compute Wick integrals of exponentials and Wick exponentials. Using that $\kappa_1 \equiv 0$ and for $r > 0$,
	\begin{align*}
		\int_s^t e^{rX_u} \dia \dif X_u &= \int_s^t e^{rX_u} \dif X_u - \sum_{n = 2}^\infty \int_s^t \frac{r^{n-1}}{n!} e^{rX_u} \kappa_n(\dif u) \\
		&= \frac{e^{rX_t} - e^{rX_s}}{r} - \int_s^t e^{rX_u} \sum_{n = 1}^\infty\frac{r^{n-1}}{n!}  \kappa_n(\dif u) = \frac{1}{r} \bigg[ e^{rX_t} - e^{rX_s} - \int_s^t e^{rX_u} \dif_u \K_u(r) \bigg]
	\end{align*}
	where $\K_u(r) = \log \E e^{rX_u}$. Integrating the Wick exponential instead yields a more familiar identity (here we argue directly, but the same could have been obtained by applying the time-dependent It\^o formula \autoref{rem:timeito}):
	\begin{align*}
		\int_s^t \exp_\dia(rX_u) \dia \dif X_u &= \int_s^t \exp(rX_u - \K_u(r)) \dia \dif X_u \\
		&= \int_s^t \exp(rX_u - \K_u(r)) \dif X_u - \frac 1r \int_s^t \exp(rX_u - \K_u(r)) \dif_u\K_u(r) \\
		&= \frac 1r\int_s^t \dif_u\exp(rX_u - \K_u(r)) = \frac{\exp_\dia(rX_t) - \exp_\dia(rX_s)}{r}.
	\end{align*}
	Here the Wick exponential can be viewed as extending the stochastic exponential in It\^o calculus.
\end{example}

\begin{example}[Gaussian process $\times$ independent random variable]
	Let \(	X_t = Y W_t	\)	where $Y$ is a (static) $\mathbb R^{d \times d}$-valued random matrix and $W$ is a centred Gaussian process with independent components, independent of $Y$, with covariance function that is constant across components:
	\[
	\E[W^i_s W^i_t] = R(s,t), \quad R(t) \coloneqq R(t,t)
	\]
	with $R \in C^1([0,T]^2)$ and first derivative $0 < \alpha$-H\"older. Also assume $R(t,t) > 0$ for $t > 0$ and $\det Y \neq 0$ a.s.
	
	Conditionally on $Y$, the process $X$ is Gaussian, and
	\begin{align*}
		\M_{s,t}(z,w) = \E \E[\M_{s,t}(z,w) | Y]={} \E\exp\Bigg[\frac12\Big(
		R(s) z^\intercal Y Y^\intercal z + 2R(s,t) z^\intercal Y Y^\intercal w + R(t) w^\intercal Y Y^\intercal w
		\Big)\Bigg].
	\end{align*}
	The argument of the exponential is bounded by $(|z|^2 + |w|^2)|Y|^2$ ($|Y|$ the operator norm of the matrix) up to a multiplicative constant, uniformly in $s,t$. Thus, if $\E e^{\lambda |Y|^2} < \infty$ for some $\lambda > 0$, $\K_{s,t}$ is analytic in a sufficiently small polydisc around the origin, and differentiating under the expectation shows $\M$ is $C^1$ in the time variables and thus so is $\K$, since $\partial_t \K = \partial_t \M/\M$. The path regularity is entirely inherited from $W$, and an application of Kolmogorov's continuity argument implies that this is $(1 + \alpha)/2 > \beta$-H\"older and thus of bounded $2/(1+\alpha) < p$-variation. The required exponential integrability (using Euclidean, not coordinatewise, norms, for simplicity) follows by observing that by Fernique there exists $a > 0$ such that $\E e^{a \| W \|_\infty^2} < \infty$, and thus bounding $\sigma |Y| \|W\|_\infty \leq \frac a2 \|W\|^2_\infty + \frac{\sigma^2}{2a}|Y|^2$ by Young's inequality, and taking $\sigma^2 < 2a\lambda$. By conditioning and a change of variables, $X_t$ has density given a.e.\ by $p_{X_t}(x) = \E p_{W_t}(Y^{-1}x)/|\det Y|$ which is strictly positive for all $x \in \mathbb R^d$ and $t > 0$.

	If the goal is to re-center integrals w.r.t.\ $X$, a viable option (available as long as $Y$ is defined on the same Gaussian Hilbert space) is to view $f(YW_t)$ as a function of $W_t$ and to use Skorokhod integration 
	\begin{align*}
		\int_s^t f_j(X_u) \dif X^j_u &= Y^j_i\int_s^t f_j(X_u) \dif W_u^i = Y^j_i \int_s^t f_j(X_u) \sko W_u^i +  \sum_{i = 1}^d \frac{Y^j_i Y^h_i}{2} \int_s^t \partial_h f_j(X_u) R(\dif u) .
	\end{align*}
	Here $\sko W$ should be read as \say{$\dia_W \dif W$}, i.e.\ where the Wick product is taken w.r.t.\ $W$. This approach is valid but has the drawback of the right hand side of the expression requiring one to split $X$ into its \say{non-observable} constituents $Y$ and $W$. Moreover, this approach would not even yield a centred integrand in the more general case in which $Y$ and $W$ are not independent. A more intrinsic option (which however involves infinite series and is restricted to entire integrands of exponential type) is instead to use the formulae in this section. In order to apply these, one must be able to compute joint cumulants of $X$. This can be done by using Malyshev's formula for cumulants of products of random variables, see \cite[Proposition 3.2.1]{PT}. Assume for simplicity that $Y$ is diagonal: we compute for an even number of terms (for an odd number the cumulant vanishes)
	\begin{align*}
		\kappa[X^{i_1}_u, \ldots, X^{i_{2n-1}}_u, X^j_v] = R(u)^{n-1} R(u,v) \underbrace{\sum_{\substack{\pi \times \rho \, \in \, \mathcal P([2n]) \times_\mathrm{C} \mathcal P_{\mathrm{G}}([2n]) \\ k_\beta = 1,\ldots, d \ : \ \beta \in \rho }} \prod_{\alpha \in \pi}\kappa[Y^{i_r}_{k_{\rho(r)}} : r \in \alpha]}_{\eqqcolon C^{I,j}}.
	\end{align*}
	where $\rho(r)$ is the pair in $\rho$ to which $r$ belongs, we are taking products of partitions by independence, and the modifier $\mathrm{C}$ to the cartesian product means that the two partitions viewed together must be a connected diagram. An example of a diagram considered is given in \autoref{fig:malyshev}.
	\begin{figure}[ht!]
		\centering
		\begin{tikzpicture}[rotate = 90]
			\dotrows{2,2,2,2,2,2}
			\draw[Cerulean] (\dotcoord{1,1}) to (\dotcoord{2,1});
			\draw[Cerulean] (\dotcoord{2,1}) to[bend right = 30] (\dotcoord{4,1});
			\draw[Cerulean] (\dotcoord{3,1}) to[bend right = 30] (\dotcoord{6,1});
			\draw[Cerulean, line width=1.75] (\dotcoord{5,1}) to[bend right = 30] (\dotcoord{5,1});
			\draw[OrangeRed] (\dotcoord{1,2}) to[bend left = 30] (\dotcoord{5,2});
			\draw[OrangeRed] (\dotcoord{2,2}) to (\dotcoord{3,2});
			\draw[OrangeRed] (\dotcoord{4,2}) to[bend left = 30] (\dotcoord{6,2});
		\end{tikzpicture}
		\caption{An example of a diagram considered in the above sum. The picture is rotated by 90$^\circ$ counterclockwise, to improve the layout, so that rows have become columns. The blue edges represent those in $\pi$, and notice that singletons are allowed since $Y$ is not assumed centred. The red edges are those in $\rho$, and only consist of pairings since $W$ is Gaussian. By independence the edges do not cross over into the other column (displayed as a row), but taken together the diagram cannot be separated into two sub-diagrams (connectedness), so the blue and red diagrams must be summed over jointly.}
		\label{fig:malyshev}
	\end{figure}
	
	Differentiating w.r.t.\ $v$ and evaluating at $v = u$ we obtain
	\begin{align*}
		&\partial_{2n}\kappa^{I,j}(u, \ldots, u, u) = \frac 12 R(u)^{n-1}R'(u) C^{I,j}, \qquad \frac{\dif}{\dif u} \kappa^{Ij}(u) = n R(u)^{n-1}R'(u) C^{I,j} \\
		\implies{} &\kappa^{I,j}(u,\ldots,u, \dif u) = \frac{1}{2n}\kappa^{I,j}(\dif u).
	\end{align*}
	An example of such a process that is considered in the literature as a model of anomalous diffusion is \emph{fractional grey noise} \cite{Sch92, MP08} (with $H > 1/2$), cf.\ \cite{GJ16,BGO23} for related forms of Wick calculus. In this case, taking $d = 1$, we have $Y = \sqrt{S_\beta}$ where $S_\beta$ with $0 < \beta \leq 1$ Mittag-Leffler distributed, i.e.\ with MGF given by
	\(
	\E e^{z S_\beta} = \sum_{n = 0}^\infty \frac{z^n}{\Gamma(\beta n + 1)}.
	\)
	This is an entire function but one must still restrict to a finite radius since it may have complex zeros for $\beta < 1$.
\end{example}

\section{Non-Gaussian Wick product in Wiener chaos}\label{sec:wiener}
Let $\mathcal H$ be a separable real Hilbert space with inner product
$\langle \,\cdot\,,\,\cdot\,\rangle$, and let
$W \colon \mathcal H \to L^2(\Omega)$ be a centred isonormal Gaussian process,
as defined, for example, in \cite[Ch.~2]{NP}. We recall the Wiener--It\^o chaos
isomorphism
\[
\I = \bigoplus_{n=0}^\infty \I^n
\colon
{\textstyle\bigodot}(\mathcal H)
\xrightarrow{\cong}
\bigoplus_{n=0}^\infty \mathcal W^n
=
L^2(\Omega).
\]
Here \({\textstyle\bigodot}(\mathcal H)= \bigoplus_{n=0}^\infty \mathcal H^{\odot n}\) denotes the Hilbert completion of the algebraic symmetric algebra over
$\mathcal H$, i.e.\ the symmetric Fock space, and
$\mathcal W^n = \I^n(\mathcal H^{\odot n})$ is the $n^\text{th}$ homogeneous
Wiener chaos. We use the convention that $\mathcal H^{\odot n}$ is equipped with the norm $
\sqrt{n!}\,\Vert \cdot \Vert_{\mathcal H^{\otimes n}}$. This differs from the convention adopted in some parts of the literature, as in \cite{NP}, where the symmetric tensor powers are equipped with the inherited tensor-product norm and the multiple Wiener integrals are correspondingly normalised by a factor $(\sqrt{n!})^{-1}$. With our choice of norm, the Wiener--It\^o isomorphism is instead normalised so that
\[
\I^n(h_1 \odot \cdots \odot h_n)
=
W(h_1) \dia_W \cdots \dia_W W(h_n),
\qquad
h_1,\ldots,h_n \in \mathcal H,
\]
without additional factorial factors. Here $\dia_W$ denotes the Wick product with respect to the Gaussian family $W$, and the right-hand side is the multivariate
Hermite polynomial of total degree $n$ associated with the Gaussian vector $(W(h_1),\ldots,W(h_n))$. We use the symbol $\dia_W$ to distinguish it from $\dia$, the Wick product introduced below for specified random variables belonging
to higher Wiener chaoses.

 Since for centred Gaussian random variables, cumulants of order different from $2$ vanish, the only diagrams that need to be considered when $\mathcal I = W(\mathcal H) = \mathcal W^1$ are ones in which all edges connect exactly two nodes; we call such diagrams \emph{Gaussian} and denote them by using the subscript $\mathrm{G}$. For these, edges represent inner products $\E[W(h)W(g)] = \langle h, g \rangle$. Given $(\pi,F) = D \in \calD_\mathrm{G}$ we set
\begin{equation}\label{def_on_pure}
\I(D) \coloneqq W^{\dia_W D} = \I(F) \prod_{\{h,g\} \in \pi}\langle h,g\rangle \text{ .}
\end{equation}
We now extend this notation to rows decorated by arbitrary symmetric tensors via partial contractions, adapting the conventions of \cite[Appendix B4]{NP} to the normalization adopted above.

\begin{proposition}\label{prop:Iextension} Definition \eqref{def_on_pure} extends uniquely to non-flat Gaussian diagrams whose rows are decorated by arbitrary tensors in $\bigodot(\mathcal H)$, with the degree of each tensor equal to the length of the corresponding row. \end{proposition}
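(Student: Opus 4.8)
The plan is to exhibit the right-hand side of \eqref{def_on_pure} as one multilinear map, and then invoke the universal property of the tensor product together with a continuity estimate. Fix the combinatorial Gaussian diagram $D=(\pi,F)$ with rows of lengths $n_1,\dots,n_m$, set $N=n_1+\dots+n_m$, and view $\pi$ as a partial pairing of the $N$ node-slots, $F$ being the set of $|F|=N-2|\pi|$ unpaired ones. Writing $h_s$ for the decoration carried by slot $s$ and reading $\I^{|F|}$ as extended to all of $\calH^{\otimes|F|}$ through symmetrisation (as it is already used in the excerpt), \eqref{def_on_pure} is the value at $(h_{i,k})$ of the rule
\[
(h_{i,k})_{1\le i\le m,\ 1\le k\le n_i}\ \longmapsto\ \I^{|F|}\!\Big(\textstyle\bigotimes_{s\in F}h_s\Big)\prod_{\{s,t\}\in\pi}\langle h_s,h_t\rangle ,
\]
which is plainly $N$-linear, each $h_{i,k}$ appearing in exactly one of the factors. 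First I would factor this through a linear map $\Psi_D$ on $\bigotimes_{i=1}^m\calH^{\otimes n_i}\cong\calH^{\otimes N}$; checking the rule on elementary tensors shows $\Psi_D=\I^{|F|}\circ\mathcal{C}_\pi$, where $\mathcal{C}_\pi\colon\calH^{\otimes N}\to\calH^{\otimes|F|}$ is the iterated contraction along the pairs of $\pi$ --- this realises the ``partial contractions'' description of \cite[Appendix B]{NP}.

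Next I would record the continuity estimate. A single inner-product contraction in two slots is norm-decreasing on the relevant Hilbert tensor power by Cauchy--Schwarz, so $\mathcal{C}_\pi$ extends to a contraction $\calH^{\hat\otimes N}\to\calH^{\hat\otimes|F|}$; and $\|\I^{|F|}(g)\|_{L^2\Omega}\le\|g\|_{\calH^{\otimes|F|}}$ since $\I$ is an isometry for the normalised norm on $\calH^{\odot|F|}$ and symmetrisation does not increase $\|\cdot\|_{\calH^{\otimes|F|}}$. Hence $\Psi_D$ extends to a bounded linear map on $\calH^{\hat\otimes N}$ with $\|\Psi_D(g_1\otimes\dots\otimes g_m)\|_{L^2\Omega}\le\prod_i\|g_i\|_{\calH^{\otimes n_i}}$, taking values in $\mathcal W^{|F|}$ because $\I^{|F|}$ does. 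The desired extension is then the restriction of $\Psi_D$ to the closed subspace $\bigotimes_{i=1}^m\calH^{\odot n_i}$, which lands in $\mathcal W^{|F|}=\mathcal W^{n_1+\dots+n_m-2|\pi|}$ as required. Uniqueness follows because the elementary tensors $h_{i,1}\otimes\dots\otimes h_{i,n_i}$ are total in $\calH^{\otimes n_i}$, so $\Psi_D$ is the only bounded linear map agreeing with \eqref{def_on_pure} on them, and a fortiori its restriction to symmetric decorations is the only continuous extension that is multilinear in the rows.

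The main thing to get right --- and the only place the convention that the rows of a diagram are unordered plays a role --- is checking that $\Psi_D$, hence $\I(D)[\,\cdot\,]$, depends only on $D$: to even write the scalars $\langle h_s,h_t\rangle$ one must identify the $n_i$ nodes of row $i$ with the $n_i$ slots of the tensor decorating it, and two such identifications produce maps on $\calH^{\otimes N}$ that differ by permuting slots within rows, i.e.\ by symmetries acting trivially on $\bigotimes_i\calH^{\odot n_i}$. Their restrictions to $\bigotimes_i\calH^{\odot n_i}$ therefore coincide, so $\I(D)[f_1,\dots,f_m]$ is unambiguous on symmetric decorations and depends only on the combinatorial data of $D$ (the residual set together with the number of edges within and between rows). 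Finally, on rank-one symmetric decorations $f_i=h_{i,1}\odot\dots\odot h_{i,n_i}$ one recovers \eqref{def_on_pure} once the symmetrisation built into $\I^{|F|}$ and the identification of a row of field elements with their symmetric product are accounted for; the remaining bookkeeping --- that iterated contractions and $\I^{|F|}$ interact as stated --- is routine, and I would just cite \cite[Appendix B]{NP}.
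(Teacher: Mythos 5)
Your overall plan---realise \eqref{def_on_pure} as a partial contraction $\mathcal{C}_\pi$ followed by $\I^{|F|}$, get existence from a Cauchy--Schwarz estimate and uniqueness from density of elementary tensors---is the same strategy as the paper's, which writes $\I(D)$ as an orthonormal-basis expansion and proves $L^2$ convergence by Cauchy--Schwarz applied blockwise to the $n_i$ indices of each row. But your continuity step contains a genuine error. The claim that ``a single inner-product contraction in two slots is norm-decreasing on the relevant Hilbert tensor power'', and hence that $\mathcal{C}_\pi$ extends to a contraction of $\calH^{\widehat\otimes N}$ and $\Psi_D$ to a bounded linear map there, is false: already for the smallest cross-row pairing ($m=2$, $n_1=n_2=1$, one edge) the map $\mathcal{C}_\pi$ is the trace functional on $\calH\widehat\otimes\calH\cong$ the Hilbert--Schmidt operators, which is unbounded (take $g_n=n^{-1/2}\sum_{i\le n}e_i\otimes e_i$, of norm one and trace $\sqrt n$). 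Your inequality $\|\Psi_D(g_1\otimes\cdots\otimes g_m)\|_{L^2}\le\prod_i\|g_i\|$ is an estimate on \emph{product} vectors only; such an estimate controls a multilinear map but does not bound its linearisation on the Hilbert tensor product (it would on the projective one). So the bounded $\Psi_D$ you propose to restrict does not exist, and restricting to $\bigotimes_i\calH^{\odot n_i}$ does not help, since in the example above that subspace is all of $\calH\widehat\otimes\calH$. (The proposition itself is untouched by this example, where degree-one decorations are already elementary; it is only your intermediate claim that fails.)

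What is true, and what the proposition actually needs, is the multilinear estimate: for a Gaussian pairing whose edges join distinct rows, $\|\mathcal{C}_\pi(g_1\otimes\cdots\otimes g_m)\|\le\prod_i\|g_i\|_{\calH^{\otimes n_i}}$, obtained by iterating Cauchy--Schwarz block by block over the rows; this is exactly the paper's ``Cauchy--Schwarz on each block of $n_i$ indexes'', equivalently the contraction estimates of \cite[Appendix B]{NP}. Since the extension is only ever evaluated on tuples $(f_1,\ldots,f_m)$, i.e.\ on product decorations, this bound suffices: approximate each $f_i\in\calH^{\odot n_i}$ separately by finite linear combinations of elementary symmetric tensors and pass to the limit one argument at a time; uniqueness among row-wise continuous multilinear extensions then follows from the density argument you give, and your observation that intra-row slot permutations act trivially on symmetric decorations stands. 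Rewritten in this multilinear form (discarding the fictitious bounded $\Psi_D$, and noting in passing that the bound $\|\I^{|F|}(g)\|_{L^2}\le\|g\|$ should carry a harmless combinatorial constant depending on $|F|$), your argument becomes essentially the paper's proof. Note finally that flat edges---a trace within a single row---are not controlled by any such estimate, so the extension should be understood for edges joining distinct rows, which is the only case used in the subsequent corollaries.
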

\begin{proof}
Let $f_1 \in \mathcal{H}^{\odot n_1}, \ldots, f_m \in \mathcal{H}^{\odot n_m}$,
let $(\pi,F)=D\in\mathcal{D}_{\mathrm{G}}(f_1,\ldots,f_m)$ be non-flat, and let
$\{e_r\}_{r=1}^\infty$ be an orthonormal basis of $\mathcal H$. Set
$N=n_1+\cdots+n_m$, enumerate the dots of $D$ from $1$ to $N$, and denote
the unpaired dots by $r_1,\ldots,r_k$, where $k=N-2|\pi|$. Define the
unrenormalised partial contraction
\begin{equation}\label{eq:general_contraction}
\begin{split}
C_\pi(f_1,\ldots,f_m)
\coloneqq{}&
\sum_{i_1,\ldots,i_N\geq 1}
\langle
f_1\otimes\cdots\otimes f_m,\,
e_{i_1}\otimes\cdots\otimes e_{i_N}
\rangle_{\mathcal H^{\otimes N}}
\bigg(\prod_{\{r,s\}\in\pi}\delta_{i_r i_s}\bigg)
e_{i_{r_1}}\otimes\cdots\otimes e_{i_{r_k}}\,.
\end{split}
\end{equation}
The series is understood as the limit in $\mathcal H^{\otimes k}$ of the
finite-dimensional truncations obtained by restricting all indices to
$1,\ldots,M$. We first verify that this limit exists and is independent of
the choice of orthonormal basis. For elementary tensors
$f_\ell=h_{\ell,1}\otimes\cdots\otimes h_{\ell,n_\ell}$, let
$h_1,\ldots,h_N$ denote the corresponding tensor factors, ordered according
to the enumeration of the dots of $D$. By the completeness relations
\[
\sum_{i=1}^\infty
\langle h,e_i\rangle\langle g,e_i\rangle
=
\langle h,g\rangle,
\qquad
\sum_{i=1}^\infty
\langle h,e_i\rangle e_i
=
h,
\]
the expression in \eqref{eq:general_contraction} reduces to the intrinsic
operation of contracting the tensor components corresponding to the pairs in
$\pi$ through the inner product of $\mathcal H$ and retaining the components
corresponding to the unpaired dots. In particular, it is independent of the
choice of orthonormal basis. The non-flatness assumption is essential for extending this operation to
arbitrary Hilbert tensors. Indeed, since no pair in $\pi$ joins two dots
belonging to the same row, every contracted index occurs in two distinct row
tensors. Repeated applications of the Cauchy--Schwarz inequality to the contracted indices therefore yield
\[
\Vert C_\pi(f_1,\ldots,f_m)\Vert_{\mathcal H^{\otimes k}}
\leq
\prod_{\ell=1}^m
\Vert f_\ell\Vert_{\mathcal H^{\otimes n_\ell}}
\]
for algebraic tensors. Hence the contraction extends uniquely to a continuous
multilinear map on
\(
\mathcal H^{\otimes n_1}\times\cdots\times\mathcal H^{\otimes n_m},
\)
and therefore, by symmetrisation, on
\(
\mathcal H^{\odot n_1}\times\cdots\times\mathcal H^{\odot n_m}.
\) Indeed, if $P_M$ denotes the orthogonal projection onto
$\operatorname{span}\{e_1,\ldots,e_M\}$, then the $M$-th truncation of
\eqref{eq:general_contraction} is precisely the contraction obtained by
replacing each $f_\ell$ with $P_M^{\otimes n_\ell}f_\ell$. Since
\[
P_M^{\otimes n_\ell}f_\ell\longrightarrow f_\ell
\qquad\text{in }\mathcal H^{\otimes n_\ell},
\]
the above continuity estimate shows that these truncations converge in
$\mathcal H^{\otimes k}$. Their limit is the continuous extension of the
intrinsic contraction described above and is therefore independent of the
orthonormal basis.

We now set
\begin{equation}\label{eq:random_diagram}
\I(D)
\coloneqq
\I^k\bigl(\mathrm{Sym}_k C_\pi(f_1,\ldots,f_m)\bigr),
\end{equation}
where $\mathrm{Sym}_k \colon\mathcal H^{\otimes k}\to\mathcal H^{\odot k}$
denotes symmetrisation. Since symmetrisation is contractive with respect to
the norm of $\mathcal H^{\otimes k}$ and, under our normalisation, $\I^k$ is
an isometry from $\mathcal H^{\odot k}$ into $L^2(\Omega)$, we obtain
\[
\begin{aligned}
\Vert \I(D)\Vert_{L^2(\Omega)}
&=
\sqrt{k!}\,
\Vert \mathrm{Sym}_k C_\pi(f_1,\ldots,f_m)
\Vert_{\mathcal H^{\otimes k}}\leq
\sqrt{k!}\,
\prod_{\ell=1}^m
\Vert f_\ell\Vert_{\mathcal H^{\otimes n_\ell}}
=
\sqrt{\frac{k!}{n_1!\cdots n_m!}}\,
\prod_{\ell=1}^m
\Vert f_\ell\Vert_{\mathcal H^{\odot n_\ell}}.
\end{aligned}
\]
Thus the map is continuous in the tensors decorating the rows. For elementary
symmetric tensors, \eqref{eq:random_diagram} agrees with Definition
\eqref{def_on_pure}. Since the linear span of elementary symmetric tensors is
dense in each $\mathcal H^{\odot n_\ell}$ and the map defined above is
continuous, the extension is unique.
\end{proof}

For $f_1 \in \calH^{\odot n_1}, \ldots, f_m \in \calH^{\odot n_m}$ we will then write $(\pi, F) = D \in \calD_{\mathrm{G}}(f_1,\ldots,f_m)$, and $\I(D) \in \mathcal W^{n_1 + \ldots + n_m - 2|\pi|}$. If $D$ is a total diagram, $\I^0(D)$ is a real number. Using this notation and \autoref{prop:EkW} we recover the following known result (see e.g.\ \cite{NP10} and \cite[Ch.8]{NP}).
\begin{corollary}\label{prop_moment_cumulant}
With notation as above,
\begin{align*}
	\mathbb E[\I^{n_1}(f_1) \cdots \I^{n_m}(f_m)] &= \sum_{D \in \mathcal P_{\mathrm{G}, \mathrm{NF}}(f_1,\ldots,f_m)} \I^0(D), \\
	\kappa[\I^{n_1}(f_1), \ldots , \I^{n_m}(f_m)] &= \sum_{D \in \mathcal P_{\mathrm{G}, \mathrm{NF}, \mathrm{C}}(f_1,\ldots,f_m)} \I^0(D) \text{ .}
\end{align*}
\end{corollary}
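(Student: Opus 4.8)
The plan is to read both identities off the $\dia$-moment and $\dia$-cumulant formulae of \autoref{prop:EkW}, applied to the centred, jointly Gaussian family $\mathcal X = (W(h))_{h \in \calH}$ indexed by $\mathcal I = \calH$, for which $\dia_X$ is literally $\dia_W$. The only substantial work is the passage from general symmetric tensors $f_k$ to elementary ones, together with the accompanying limit.

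First I would reduce to the case $f_k = h^{(k)}_1 \odot \cdots \odot h^{(k)}_{n_k}$. Both sides of each asserted identity are continuous in each $f_k \in \calH^{\odot n_k}$. On the left, $f_k \mapsto \I^{n_k}(f_k)$ is an isometry onto $\mathcal W^{n_k}$, on which all $L^p$-norms are equivalent by hypercontractivity, so $f^{(N)}_k \to f_k$ forces $\I^{n_k}(f^{(N)}_k) \to \I^{n_k}(f_k)$ in every $L^p$; hence the products $\prod_k \I^{n_k}(f^{(N)}_k)$, and the products over any subfamily of the indices, converge in $L^1$, so the joint moments converge and, by the Leonov-Shiraev relations, so do the joint cumulants. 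On the right, each $\I^0(D)$ is multilinear in the tensors decorating the rows of $D$ and bounded by $\prod_k \|f_k\|_{\calH^{\otimes n_k}}$ via iterated Cauchy-Schwarz (the estimate underlying \autoref{prop:Iextension}), so the finite sum $\sum_D \I^0(D)$ is continuous in $(f_1,\ldots,f_m)$. Since the elementary symmetric tensors span a dense subspace of $\calH^{\odot n_k}$, it suffices to prove the identities for them.

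For such $f_k$, set $I_k \coloneqq \{h^{(k)}_1,\ldots,h^{(k)}_{n_k}\}$, a finite submultiset of $\mathcal I = \calH$. Since $\mathcal X$ is centred, $x^{\dia h} = x^h$, so $x^{\dia I_k} = x^{h^{(k)}_1} \dia \cdots \dia x^{h^{(k)}_{n_k}}$ by \eqref{eq:wickprod}; evaluating at $x = W$ and using the identity $\I^{n_k}(h^{(k)}_1 \otimes \cdots \otimes h^{(k)}_{n_k}) = W(h^{(k)}_1) \dia_W \cdots \dia_W W(h^{(k)}_{n_k})$ recalled in the excerpt gives $\I^{n_k}(f_k) = W^{\dia I_k}$. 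Then \autoref{prop:EkW} yields
\begin{align*}
\E[\I^{n_1}(f_1) \cdots \I^{n_m}(f_m)] &= \sum_{D \in \mathcal P_{\mathrm{NF}}(I_1,\ldots,I_m)} \ka[W]^D, \\
\ka[\I^{n_1}(f_1),\ldots,\I^{n_m}(f_m)] &= \sum_{D \in \mathcal P_{\mathrm{NF},\mathrm{C}}(I_1,\ldots,I_m)} \ka[W]^D \text{ .}
\end{align*}
For a total diagram $D = (\overline\pi, \varnothing)$ we have $\ka[W]^D = \prod_{J \in \overline\pi} \ka[W(h) : h \in J]$, which vanishes unless every block $J$ has exactly two elements (since $\mathcal X$ is Gaussian and centred), i.e.\ unless $D$ is a Gaussian diagram; the two sums therefore restrict to $\mathcal P_{\mathrm{G},\mathrm{NF}}$ and $\mathcal P_{\mathrm{G},\mathrm{NF},\mathrm{C}}$. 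For a Gaussian $D$, $\ka[W]^D = \prod_{\{h,g\} \in \overline\pi} \langle h, g \rangle$, which is exactly the full contraction $\I^0(D)$ when the rows of $D$ carry the elementary tensors $f_1,\ldots,f_m$, by \eqref{def_on_pure} and its multilinear extension. This proves the identities for elementary $f_k$, hence for all $f_k \in \calH^{\odot n_k}$ by the reduction above.

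The combinatorial heart is thus immediate — Gaussianity annihilates every block of size $\neq 2$, converting \autoref{prop:EkW} into the claimed Gaussian-diagram formulae — and the only genuine obstacle is the soft analysis of the first step: one must check that joint moments, and hence joint cumulants, are continuous along $\calH^{\odot n_k}$-convergent sequences of decorating tensors, which rests on hypercontractivity of the fixed chaos $\mathcal W^{n_k}$ (to upgrade $L^2$-convergence to $L^p$-convergence) together with the boundedness of the contractions $D \mapsto \I^0(D)$.
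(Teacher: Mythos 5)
Your argument is correct and is exactly the route the paper intends: the corollary is stated without proof as an immediate consequence of \autoref{prop:EkW} (only pair-blocks survive for the centred Gaussian family $W(\calH)$, turning non-flat diagrams into Gaussian ones, with $\ka[W]^D=\prod\langle h,g\rangle=\I^0(D)$ on elementary tensors), combined with the extension to general kernels provided by \autoref{prop:Iextension}. Your write-up simply fills in the soft details of that passage (density of elementary symmetric tensors, hypercontractivity to get $L^p$-convergence and hence convergence of joint moments and, via Leonov--Shiraev, of cumulants, and continuity of the finitely many contractions $\I^0(D)$), all of which is sound.
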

\begin{remark}\label{L2remark}
In the particular case in which $\mathcal H = L^2(T,\mathcal{B},\mu)$ for a sigma-finite atomless measure space $(T,\mathcal{B},\mu)$,
the operator $\I^0$ takes a concrete form.
For symmetric functions $f_i\in L^2(T^{n_i}, \mu^{\otimes n_i})$, $i=1,\ldots,m$ and a partition $\pi \in \mathcal{P}_{\mathrm{G}}(f_1,\ldots, f_m)$,
by setting $N = n_1+\ldots +n_m$ one has the identity
\begin{equation}\label{eq_contractions}
\begin{split}
\I^0(\pi) ={} &\int_{T^{N/2}}
f_1(t_1^{\pi},\ldots,t_{n_1}^{\pi})
\cdots f_m(t_{N-n_m+1}^{\pi},\ldots,t_{N}^{\pi})
\,\mathrm d\mu^{\otimes N/2}\,,
\end{split}
\end{equation}
 where the variables $t_i^{\pi}$ are identified according to the pairings
of the partition $\pi$. Diagram formulae in this setting can be found in \cite{PT}. More generally, for any diagram $ D=(\pi,F)\in \mathcal D_{\mathrm G}(f_1,\ldots, f_m)$,
we consider the function
\begin{equation}\label{eq_partial_contractions}
\begin{split}
f_D (x_1,\ldots ,x_{|F|})
:={} &
\int_{T^{|\pi|}}
f_1(t_1^{D},\ldots,t_{n_1}^{D})
\cdots f_m(t_{N-n_m+1}^{D},\ldots,t_{N}^{D})
\,\mathrm d\mu^{\otimes |\pi|},
\end{split}
\end{equation}
where the variables $t_i^{D}$ are identified according to the diagram
$\pi$, while the free vertices correspond to the variables
$x_1,\ldots,x_{|F|}$. One then has $\I(D)= \I^{|F|}\bigl(\mathrm{Sym} (f_D)\bigr)$. In this sense, the operator $\I(D)$ replaces the usual tensor contraction in $\mathcal H$, see \cite[Appendix B]{NP}.
\end{remark}

In addition to standard moment-cumulant computations, \autoref{prop:Iextension} allows us to reformulate \autoref{thm:change} in this setting. Let $\mathcal I$ be an index set, $n_i \geq 1$, $f_i \in \calH^{\odot n_i}$ and we consider the subspace of $L^2(\Omega)$
\begin{equation}\label{eq:XWiener}
\mathcal X \coloneqq \mathrm{span}\{ \I^{n_i}(f_i) : i \in \mathcal I \}
\end{equation}
We use $\dia$ to denote the Wick product and Appell polynomials associated to $\mathcal X$; this is the same product as $\dia_W$ only when all the factors are in the first Wiener chaos.

\begin{corollary}\label{cor:change_of_chaos}
	Let $f_i \in \mathcal H^{\odot n_i}$ for $i=1,\ldots,m$. Then
	\[
	y^1 \dia \cdots \dia y^m \big|_{y^j = \I^{n_j}(f_j)} = \sum_{\substack{(\pi, K) \in \mathcal D_{\mathrm{G},\mathrm{NF}, \mathrm{C}}(f_1,\ldots,f_m) \\ |K| > 0}}  \I^{|K|}(\pi,K).
	\]
\end{corollary}

As explained in \autoref{rem:wd}, the Wick product $\dia$ does not in general descend to the level of random variables, and in order for this to happen over a linear space of kernels spanned by a collection $\{ f_i \in \calH^{\odot n_i} \mid i \in \mathcal I \}$ it is sufficient for all finite-dimensional marginals to admit densities; in fact \cite{NNP13} show that this is equivalent to the absence of polynomial relations. Shigekawa \cite{Sh80} proved that every non-constant random variable of the form $F = \sum_{k=1}^{N}\I^{n_k}(f_{k})$ admits a density, by showing $\D F \neq 0$ a.s., where $\D$ here denotes the Malliavin derivative. For vectors of the form $(\I^n(f),\I^n(g))$, Nourdin and Tudor \cite{NT17} showed that the existence of a density is equivalent to the linear independence of $f$ and $g$. Another case is given by considering linearly independent $\mathcal H_1,\ldots,\mathcal H_m\subset\mathcal H$ (by which we mean $h_1 + \ldots + h_m = 0$ with $h_k \in \calH_k$ implies $h_k = 0$ for all $k$), and let $f_k\in\mathcal H_k^{\odot n_k}$ be non-zero for $k=1,\ldots,m$. Then the random variables $\D\I^{n_k}(f_k) \in \cH_k$ are linearly independent. Combined with the fact that each is non-zero, this shows that the Malliavin matrix $(\langle \D\I^{n_i}(f_i), \D\I^{n_j}(f_j)  \rangle_{\calH})_{i,j}$ is non-degenerate and thus that the random vector $(\I^{n_1}(f_1),\ldots,\I^{n_m}(f_m))$ admits a density with respect to Lebesgue measure on $\mathbb R^m$.\\

We now take $n_i \equiv 2$ in \eqref{eq:XWiener} and consider Appell polynomials of second chaos variables. A further simplification of \autoref{prop_moment_cumulant} and \autoref{cor:change_of_chaos} occurs in this case. In this case the operator $\I(D)$ can be expressed in terms of a single operation $\star$ and simpler combinatorial objects, which we explain now.

 For $f\in\mathcal H^{\otimes p}$ and $g\in\mathcal H^{\otimes q}$,  $f\star g\in\mathcal H^{\otimes(p+q-2)}$ is defined by
\begin{align*}
&f\star g\coloneqq{}\\&
\sum_{k,i_2,\ldots,i_p,j_2,\ldots,j_q\geq1}
\bigl\langle f, e_{i_2}\otimes\cdots\otimes e_{i_p}\otimes e_k\bigr\rangle
\bigl\langle g,e_k\otimes e_{j_2}\otimes\cdots\otimes e_{j_q}\bigr\rangle e_{i_2}\otimes\cdots\otimes e_{i_p}\otimes
e_{j_2}\otimes\cdots\otimes e_{j_q},
\end{align*}
for any orthonormal basis $(e_k)_{k\geq1}$ of $\mathcal H$.  As in the preceding proposition, these definitions are independent of the chosen orthonormal basis, and the map $(f,g)\mapsto f\star g$ is bilinear and continuous. Moreover when $f\in\mathcal H^{\odot p}$ and $g\in\mathcal H^{\odot q}$, the expression does not depend on the position where we place $e_k$ and it is  a standard exercise to show that in that case $\star$ coincides with the one contraction $\otimes_1$, see \cite[Section 2.7.4]{NP}. The operation \(\star\) is particularly convenient on
\(\mathcal H^{\otimes2}\). For \(f\in\mathcal H^{\otimes2}\), define
\begin{equation}\label{eq:contract_op}
\A_f\colon\mathcal H\to\mathcal H,
\qquad
\A_fh\coloneqq f\star h.
\end{equation}
Then \(\A_f\) is a Hilbert--Schmidt operator and \(
\|\A_f\|_{\mathrm{HS}}
=
\|f\|_{\mathcal H^{\otimes2}}.
\) Moreover, for every
\(f_1,\ldots,f_m\in\mathcal H^{\otimes2}\),
\[
\A_{(\cdots((f_1\star f_2)\star f_3)\cdots)\star f_m}
=
\A_{f_1}\cdots\A_{f_m}.
\]
Since the correspondence \(f\mapsto\A_f\) is injective, the
associativity of operator composition implies that \(\star\) is an associative, and in general non-commutative, product on
\(\mathcal H^{\otimes2}\). We shall therefore omit parentheses in
iterated \(\star\)-products.Finally, in case $f\in  \mathcal H^{\odot 2}$ the resulting operator $\A_f$ becomes also self-adjoint and one has $\Vert \A_f\Vert_{\mathrm{HS}}
=
\frac{1}{\sqrt{2}}\Vert f\Vert_{\mathcal H^{\odot2}} $ .  For \(u\in\mathcal H^{\otimes2}\), let \(u^\top\) denote the tensor
obtained by exchanging its two variables. Since
\(\A_{u^\top}=\A_u^*\), for
\(f_{j_1},\ldots,f_{j_n}\in\mathcal H^{\odot2}\) one has
\[
\left(f_{j_1}\star\cdots\star f_{j_n}\right)^\top
=
f_{j_n}\star\cdots\star f_{j_1}.
\]
Therefore, exactly as for the symmetrization of a matrix, one has
\[
\operatorname{Sym}_2
\bigl(f_{j_1}\star\cdots\star f_{j_n}\bigr)
=
\frac12\left(
f_{j_1}\star\cdots\star f_{j_n}
+
f_{j_n}\star\cdots\star f_{j_1}
\right).
\]
In particular, the resulting symmetric contraction is unchanged when the orientation of the corresponding path is reversed.

When $f_1,\ldots,f_m\in\mathcal H^{\odot 2}$, every element
$D\in\mathcal D_{\mathrm G,\mathrm{NF},\mathrm C}(f_1,\ldots,f_m)$
can be encoded by a simpler combinatorial object, cf.\ \cite[Ch.\ 4]{PT}. Since each row contains
exactly two nodes, we collapse each row into a single vertex labelled by
$f_i$, so that contractions induce a multigraph with vertex set
$\{f_1,\ldots,f_m\}$; see
\autoref{diagram_to_multigraph} for some examples. In fact, the only multigraph that is not a graph is given by two nodes connected by two edges, but we continue to use this terminology since it is standard and would extend to higher chaoses. In particular, if
$D\in\mathcal P_{\mathrm G,\mathrm{NF},\mathrm C}(f_1,\ldots,f_m)$,
then every node is paired and connectedness implies that the associated
multigraph is a single cycle, with every vertex of degree $2$; we call
such multigraphs cycles over $\{f_1,\ldots,f_m\}$. More generally, if
$D\in\mathcal D_{\mathrm G,\mathrm{NF},\mathrm C}(f_1,\ldots,f_m)$ is
not total, then the associated multigraph has degree at most $2$ at every
vertex and is a disjoint union of path components, including trivial
paths consisting of a single vertex of degree $0$. Each non-trivial path has exactly two vertices of degree $1$, while the remaining vertices have degree $2$. We call such multigraphs linear forests  over
$\{f_1,\ldots,f_m\}$.  We denote by $\mathcal M_{\mathrm C}(f_1,\ldots,f_m)$ the collection of cycles over $\{f_1,\ldots,f_m\}$, by
$\mathcal M_{\mathrm F}(f_1,\ldots,f_m)$  the collection of linear forests  over $\{f_1,\ldots,f_m\}$. The empty forest is included in $\mathcal M_{\mathrm F}(f_1,\ldots,f_m)$. 
\begin{figure}[!ht]
\centering
\tikzset{every picture/.append style={baseline={([yshift=-\the\fontdimen22\textfont2]current bounding box.center)}}}
\begin{align*}
	\begin{tikzpicture}
    \dotrows[rowsep = 5mm, colsep = 4mm]{2,2,2,2}
    \draw (\dotcoord{1,1}) -- (\dotcoord{2,1});
    \draw (\dotcoord{1,2}) -- (\dotcoord{3,1});
    \draw (\dotcoord{2,2}) -- (\dotcoord{4,1});
    \draw (\dotcoord{3,2}) -- (\dotcoord{4,2});
\end{tikzpicture}\quad\xrightarrow \quad \begin{tikzpicture}
		\dotrows[rowsep = 5mm, colsep = 4mm]{1,1,1,1}
		\draw (\dotcoord{1,1}) to (\dotcoord{2,1});
		\draw (\dotcoord{2,1}) to[bend left=50] (\dotcoord{4,1});
		\draw (\dotcoord{3,1}) to (\dotcoord{4,1});
		\draw (\dotcoord{1,1}) to[bend right=50] (\dotcoord{3,1});
	\end{tikzpicture}\qquad  	\qquad \begin{tikzpicture}
    \dotrows[rowsep = 5mm, colsep = 4mm]{2,2,2,2}
    \draw (\dotcoord{1,1}) -- (\dotcoord{2,1});
    \draw (\dotcoord{1,2}) -- (\dotcoord{3,1});
    \draw (\dotcoord{2,2}) -- (\dotcoord{4,1});
\end{tikzpicture}\quad\xrightarrow \quad  \begin{tikzpicture}
    \dotrows[rowsep = 5mm, colsep = 4mm]{1,1,1,1}
    \draw (\dotcoord{1,1}) -- (\dotcoord{2,1});
    \draw (\dotcoord{1,1}) to[bend right=40] (\dotcoord{3,1});
    \draw (\dotcoord{2,1}) to[bend left=40] (\dotcoord{4,1});
\end{tikzpicture} \qquad \qquad \begin{tikzpicture}
    \dotrows[rowsep = 5mm, colsep = 4mm]{2,2,2,2}
      \draw (\dotcoord{1,1}) -- (\dotcoord{2,1});
    \draw (\dotcoord{3,1}) -- (\dotcoord{4,1});
\end{tikzpicture}\quad\xrightarrow \quad \quad \begin{tikzpicture}
    \dotrows[rowsep = 5mm, colsep = 4mm]{1,1,1,1}
     \draw (\dotcoord{1,1}) -- (\dotcoord{2,1});
    \draw (\dotcoord{3,1}) -- (\dotcoord{4,1});
\end{tikzpicture}\qquad \qquad \begin{tikzpicture}
    \dotrows[rowsep = 5mm, colsep = 4mm]{2,2,2,2}
    \draw (\dotcoord{3,1}) -- (\dotcoord{4,1});
\end{tikzpicture}\quad\xrightarrow \quad \quad \begin{tikzpicture}
    \dotrows[rowsep = 5mm, colsep = 4mm]{1,1,1,1}
    \draw (\dotcoord{3,1}) -- (\dotcoord{4,1});
\end{tikzpicture}
\end{align*}
\caption{Contraction of diagrams into multigraphs. The first example is a partition  with its associated cycle and  the other  examples are  diagrams contracting to a closed path and linear forests.}
\label{diagram_to_multigraph}
\end{figure}

The multigraph representation introduced above allows us to encode compositions of operators and contractions in a compact combinatorial way.

\begin{definition}\label{def_conventions_contractions}
Let \(m\geq 2\) and let
\(f_1,\ldots,f_m\in\mathcal H^{\odot 2}\). For each
\(i=1,\ldots,m\), let \(\A_{f_i}\) be the Hilbert--Schmidt operator
associated with \(f_i\) through \eqref{eq:contract_op}. For any \(\Gamma\in\mathcal M_{\mathrm C}(f_1,\ldots,f_m)\), written as \(
\Gamma
=
(f_m\to f_{i_1}\to\cdots\to f_{i_{m-1}}\to f_m)\), we define
\[
\A_\Gamma
:=
\A_{f_{i_{m-1}}}\cdots\A_{f_{i_1}}\A_{f_m}\,.
\]
Now let
\(\Gamma\in\mathcal M_{\mathrm F}(f_1,\ldots,f_m)\), and write its
connected components as \(
\Gamma=\Gamma_1\cup\cdots\cup\Gamma_k.
\)
Orient each non-trivial component from the endpoint lying in the lowest
row towards the other endpoint, and write $\Gamma_l= (f_{j^l_1}\to\cdots\to f_{j^l_{n_l}}),$ for $l=1,\ldots,k$. We then define
\[
f_\Gamma
:=
\bigodot_{l=1}^k
\operatorname{Sym}_2
\bigl(
f_{j^l_1}\star\cdots\star f_{j^l_{n_l}}
\bigr)=
\frac{1}{2^k}\bigodot_{l=1}^k
\bigl(
f_{j^l_1}\star\cdots\star f_{j^l_{n_l}}+f_{j^l_{n_l}}\star\cdots\star f_{j^l_1}
\bigr)
\in\mathcal H^{\odot 2k}\,,
\]
where for an isolated component
\(\Gamma_l=(f_{j^l_1})\), the corresponding factor is simply
\(f_{j^l_1}\).
\end{definition}

These notations allow us to rewrite \autoref{prop_moment_cumulant} and  \autoref{cor:change_of_chaos} in a more compact form, obtaining the following extension of \cite[Proposition 2.7.13]{NP}.
\begin{proposition}\label{prop:second-chaos-cumulants}
Let $m\geq 2$ and $f_1,\ldots,f_m \in \mathcal H^{\odot 2}$. Using the conventions in \autoref{def_conventions_contractions}, one has the identity
\begin{equation}\label{eq:cumulant_second}
\begin{split}
\kappa\big[\I^{2}(f_1),\ldots, \I^{2}(f_m)\big]
&= 2^{m-1} \omega_m\sum_{\Gamma\in \mathcal{M}_{\mathrm{C}}(f_1,\ldots, f_m)} \operatorname{Tr}(\A_\Gamma), \quad \text{with} \;\;\omega_m=\begin{cases}
1, & m=2,\\
2, & m\geq 3.
\end{cases}
\end{split}
\end{equation}
 with the trace $\operatorname{Tr}$ is given in terms of operators over $H$. Moreover, one has \begin{equation}\label{eq:1D_cumulant}
\kappa_m[\I^{2}(f)]= 2^{m-1} (m-1)! \operatorname{Tr}\big(\A_{f}^m\big)
\end{equation}
 and we have the following change of chaos formula:
\begin{equation}\label{eq:change_second}
\begin{split}
y^1 \dia \cdots \dia y^m
\big|_{y^j=\I^{2}(f_j)}
={}&
\sum_{\Gamma\in\mathcal M_{\mathrm F}(f_1,\ldots,f_m)}
2^{|V(\Gamma)|}
\I^{2m-2|E(\Gamma)|}(f_\Gamma),
\end{split}
\end{equation}
where $V(\Gamma)$ and $E(\Gamma)$ denote, respectively, the set of non isolated vertices and edges of $\Gamma$.
\end{proposition}
\begin{remark}
The trace appearing in \eqref{eq:cumulant_second} is well-defined. Indeed, each $\A_{f_i}$ is Hilbert--Schmidt, hence the product of any two of them is trace class, and any further composition with the remaining bounded operators remains trace class. Moreover, $\operatorname{Tr}(\A_\Gamma)$ depends only on the underlying cycle $\Gamma\in\mathcal M_{\mathrm C}(f_1,\ldots,f_m)$. Cyclic invariance of the trace implies that it is independent of the choice of starting vertex, while self-adjointness of the operators $\A_i$ implies invariance under reversal of the orientation. Thus, although the definition of $\A_\Gamma$ requires choosing an orientation of $\Gamma$, its trace is independent of this choice. Similarly, even though $f_{\Gamma}$ depends on the  orientation, final result \eqref{eq:change_second}. The distinction between the cases $m=2$ and $m\geq3$ in the definition of $\omega_m$ comes from the number of diagrams associated with a fixed cycle. For $m\geq3$, each of the $m$ vertices gives two independent choices for attaching its two incident edges to the two nodes of the corresponding row, giving $2^m$ diagrams. When $m=2$, the cycle consists instead of two parallel edges joining the same two vertices, and there are only two distinct pairings of the corresponding nodes. These multiplicities are respectively $2^{m-1}\omega_m$. We can expect more general multigraphs, consisting of arbitrary collections of single nodes, cycles and linear forest, when expressing product formulae using multigraphs instead of diagrams.
\end{remark}

\begin{proof}
We first prove \eqref{eq:cumulant_second}. By
\autoref{prop_moment_cumulant},
\[
\kappa\big[\I^{2}(f_1),\ldots,\I^{2}(f_m)\big]
=
\sum_{D\in
\mathcal P_{\mathrm G,\mathrm{NF},\mathrm C}(f_1,\ldots,f_m)}
\I^0(D).
\]
Every
$D\in\mathcal P_{\mathrm G,\mathrm{NF},\mathrm C}(f_1,\ldots,f_m)$
collapses to a cycle
$\Gamma(D)\in\mathcal M_{\mathrm C}(f_1,\ldots,f_m)$.
Using the definition of $\I^0(D)$ in \eqref{eq:random_diagram}, together with the definition of trace and $\star$, one obtains
\[
\I^0(D)=\operatorname{Tr}\big(\A_{\Gamma(D)}\big).
\]
Now fix a cycle
$\Gamma\in\mathcal M_{\mathrm C}(f_1,\ldots,f_m)$.
If $m\geq3$, at each vertex there are two possible ways of attaching
its two incident edges to the two nodes of the corresponding row. These
choices are independent, so that $\Gamma$ is associated with exactly
$2^m$ diagrams. When $m=2$, the cycle consists of two parallel edges,
and there are exactly two possible pairings of the nodes of the two rows.
Thus, in both cases, the number of diagrams associated with $\Gamma$ is
$2^{m-1}\omega_m$. Summing over
$\Gamma\in\mathcal M_{\mathrm C}(f_1,\ldots,f_m)$ gives
\eqref{eq:cumulant_second}. If $f_1=\cdots=f_m=f$, then
$\operatorname{Tr}(\A_\Gamma)=\operatorname{Tr}(\A_f^m)$ for every cycle
$\Gamma$. For $m=2$ there is a single cycle, whereas for $m\geq3$ there are $(m-1)!/2$ cycles on $m$ labelled vertices, corresponding to cyclic orderings modulo rotations and reversal of orientation. Substituting into
\eqref{eq:cumulant_second} yields \eqref{eq:1D_cumulant}. We finally prove \eqref{eq:change_second}. By
\autoref{cor:change_of_chaos},
\[
y^1\dia\cdots\dia y^m
\big|_{y^j=\I^2(f_j)}
=
\sum_{\substack{
D\in\mathcal D_{\mathrm G,\mathrm{NF},\mathrm C}
(f_1,\ldots,f_m)\\
|K|>0}}
\I(D).
\]
Every diagram containing at least one contraction collapses to a linear forest \(\Gamma(D)\in\mathcal M_{\mathrm F}(f_1,\ldots,f_m)\). If \(E(\Gamma(D))\) denotes the set of edges of \(\Gamma(D)\), then the
diagram contains exactly \(|E(\Gamma(D))|\) contractions and hence has
\(2m-2|E(\Gamma(D))|\) free nodes. By applying the contraction
\(\star\) successively along each connected component of
\(\Gamma(D)\) and symmetrising it, one obtains
\[
\I(D)
=
\I^{2m-2|E(\Gamma(D))|}
\bigl(
f_{\Gamma(D)}
\bigr).
\]
It remains to determine how many diagrams give rise to the same linear forest. Fix
\(\Gamma\in\mathcal M_{\mathrm F}(f_1,\ldots,f_m)\), and let
\(V(\Gamma)\) denote the set of vertices visited by \(\Gamma\), that is,
the vertices incident to at least one edge. At every vertex in
\(V(\Gamma)\), there are exactly two possible ways of assigning the
incident edge or edges to the two nodes of the corresponding row. These
choices are independent. By contrast, a vertex not visited by
\(\Gamma\) corresponds to a row with two free nodes and gives no
additional choice. Hence precisely \(2^{|V(\Gamma)|}
\) diagrams collapse to the same linear forest \(\Gamma\). Grouping the terms in \autoref{cor:change_of_chaos} according to their associated linear forest
therefore yields \eqref{eq:change_second}.
\end{proof}

\begin{example}
We provide some examples to compute cumulants and change of chaos for three variables $\I^2(f)$, $\I^2(g)$, $\I^2(h) \in \mathcal{W}^{2}$. We list the diagrams, the multigraphs and the final terms that contribute additively to each term. Each row is intended to be decorated with the corresponding kernel ($f$, $g$, $h$).
\tikzset{every picture/.append style={baseline={([yshift=-\the\fontdimen22\textfont2]current bounding box.center)}}}

\noindent $\kappa[\I^2(f), \I^2(g)]$:
	\begin{align*}
		&\begin{tikzpicture}
			\dotrows[rowsep = 5mm, colsep = 4mm]{2,2}
			\draw (\dotcoord{1,1}) to (\dotcoord{2,1});
			\draw (\dotcoord{1,2}) to (\dotcoord{2,2});
		\end{tikzpicture}\qquad
		\begin{tikzpicture}
			\dotrows[rowsep = 5mm, colsep = 4mm]{2,2}
			\draw (\dotcoord{1,1}) to (\dotcoord{2,2});
			\draw (\dotcoord{1,2}) to (\dotcoord{2,1});
		\end{tikzpicture}
		\quad\longrightarrow\quad
		2\,\begin{tikzpicture}
    \dotrows[rowsep = 5mm, colsep = 4mm]{1,1}
    \draw (\dotcoord{1,1}) to[bend left=50] (\dotcoord{2,1});
    \draw (\dotcoord{1,1}) to (\dotcoord{2,1});
\end{tikzpicture}
\quad\leadsto\quad
2\operatorname{Tr}\big(\A_{f}\A_{g}\big).
	\end{align*}

	\noindent $\kappa[\I^2(f), \I^2(g), \I^2(h)]$:
	\begin{align*}
		&\begin{tikzpicture}
			\dotrows[rowsep = 5mm, colsep = 4mm]{2,2,2}
			\draw (\dotcoord{1,1}) to (\dotcoord{2,1});
			\draw (\dotcoord{1,2}) to (\dotcoord{3,1});
			\draw (\dotcoord{2,2}) to (\dotcoord{3,2});
		\end{tikzpicture}\qquad
		\begin{tikzpicture}
			\dotrows[rowsep = 5mm, colsep = 4mm]{2,2,2}
			\draw (\dotcoord{1,1}) to (\dotcoord{2,1});
			\draw (\dotcoord{1,2}) to[bend right=30] (\dotcoord{3,2});
			\draw (\dotcoord{2,2}) to (\dotcoord{3,1});
		\end{tikzpicture}
		\qquad
		\begin{tikzpicture}
			\dotrows[rowsep = 5mm, colsep = 4mm]{2,2,2}
			\draw (\dotcoord{1,1}) to (\dotcoord{2,2});
			\draw (\dotcoord{1,2}) to (\dotcoord{3,1});
			\draw (\dotcoord{2,1}) to (\dotcoord{3,2});
		\end{tikzpicture}
		\qquad
		\begin{tikzpicture}
			\dotrows[rowsep = 5mm, colsep = 4mm]{2,2,2}
			\draw (\dotcoord{1,1}) to (\dotcoord{2,2});
			\draw (\dotcoord{1,2}) to[bend right=30] (\dotcoord{3,2});
			\draw (\dotcoord{2,1}) to (\dotcoord{3,1});
		\end{tikzpicture}\qquad
		\begin{tikzpicture}
			\dotrows[rowsep = 5mm, colsep = 4mm]{2,2,2}
			\draw (\dotcoord{1,2}) to (\dotcoord{2,1});
			\draw (\dotcoord{1,1}) to[bend left=30] (\dotcoord{3,1});
			\draw (\dotcoord{2,2}) to (\dotcoord{3,2});
		\end{tikzpicture}
\qquad	\begin{tikzpicture}
			\dotrows[rowsep = 5mm, colsep = 4mm]{2,2,2}
			\draw (\dotcoord{1,2}) to (\dotcoord{2,2});
			\draw (\dotcoord{1,1}) to[bend left=30] (\dotcoord{3,1});
			\draw (\dotcoord{2,1}) to (\dotcoord{3,2});
		\end{tikzpicture}
		\qquad
		\begin{tikzpicture}
			\dotrows[rowsep = 5mm, colsep = 4mm]{2,2,2}
			\draw (\dotcoord{1,1}) to (\dotcoord{3,2});
			\draw (\dotcoord{1,2}) to (\dotcoord{2,1});
			\draw (\dotcoord{2,2}) to (\dotcoord{3,1});
		\end{tikzpicture}
\qquad		\begin{tikzpicture}
			\dotrows[rowsep = 5mm, colsep = 4mm]{2,2,2}
			\draw (\dotcoord{1,1}) to (\dotcoord{3,2});
			\draw (\dotcoord{1,2}) to (\dotcoord{2,2});
			\draw (\dotcoord{2,1}) to (\dotcoord{3,1});
		\end{tikzpicture}
		\quad\longrightarrow\quad
		8\,\begin{tikzpicture}
		\dotrows[rowsep = 5mm, colsep = 4mm]{1,1,1}
		\draw (\dotcoord{1,1}) to (\dotcoord{2,1});
		\draw (\dotcoord{2,1}) to (\dotcoord{3,1});
		\draw (\dotcoord{1,1}) to[bend right=50] (\dotcoord{3,1});
	\end{tikzpicture}
	\quad\leadsto\quad
	8\operatorname{Tr}\big(\A_{f}\A_{g}\A_h\big)
	\end{align*}
	The connectedness condition is moot in the case of cumulants (keeping non-flatness into account), since for total diagrams it only becomes relevant with four or more rows. For partial diagrams, it already becomes relevant at level three. Passing to the change of chaos we also add the empty diagram corresponding to $\I^{2m}(f_1\odot\cdots\odot f_m)$.\\
	\noindent $\I^2(f) \dia \I^2(g)$:
	\begin{align*}
		&
		\begin{tikzpicture}
			\dotrows[rowsep = 5mm, colsep = 4mm]{2,2}
		\end{tikzpicture}
		\qquad
		\begin{tikzpicture}
			\dotrows[rowsep = 5mm, colsep = 4mm]{2,2}
			\draw (\dotcoord{1,1}) to (\dotcoord{2,1});
		\end{tikzpicture}
		\qquad
		\begin{tikzpicture}
			\dotrows[rowsep = 5mm, colsep = 4mm]{2,2}
			\draw (\dotcoord{1,1}) to (\dotcoord{2,2});
		\end{tikzpicture}
		\qquad
		\begin{tikzpicture}
			\dotrows[rowsep = 5mm, colsep = 4mm]{2,2}
			\draw (\dotcoord{1,2}) to (\dotcoord{2,1});
		\end{tikzpicture}
		\qquad
		\begin{tikzpicture}
			\dotrows[rowsep = 5mm, colsep = 4mm]{2,2}
			\draw (\dotcoord{1,2}) to (\dotcoord{2,2});
		\end{tikzpicture}
		\quad\longrightarrow\quad
		\begin{tikzpicture}
			\dotrows[rowsep = 5mm, colsep = 4mm]{1,1}
		\end{tikzpicture}\qquad
		4\,\begin{tikzpicture}
    \dotrows[rowsep = 5mm, colsep = 4mm]{1,1}
    \draw (\dotcoord{1,1}) to (\dotcoord{2,1});
\end{tikzpicture}
\quad\leadsto\quad
\I^4(f\odot g) + 4\I^2(\operatorname{Sym}_2(f\star g))\,.
	\end{align*}
	$\I^2(f) \dia \I^2(g) \dia \I^2(h)$:
	\begin{align*}
		&\begin{tikzpicture}
			\dotrows[rowsep = 5mm, colsep = 4mm]{2,2,2}
		\end{tikzpicture}
		\qquad
		\begin{tikzpicture}
			\dotrows[rowsep = 5mm, colsep = 4mm]{2,2,2}
			\draw (\dotcoord{1,1}) to (\dotcoord{2,1});
		\end{tikzpicture}
		\qquad
		\begin{tikzpicture}
			\dotrows[rowsep = 5mm, colsep = 4mm]{2,2,2}
			\draw (\dotcoord{1,1}) to (\dotcoord{2,2});
		\end{tikzpicture}
		\qquad
		\begin{tikzpicture}
			\dotrows[rowsep = 5mm, colsep = 4mm]{2,2,2}
			\draw (\dotcoord{1,1}) to[bend left=30] (\dotcoord{3,1});
		\end{tikzpicture}
		\qquad
		\begin{tikzpicture}
			\dotrows[rowsep = 5mm, colsep = 4mm]{2,2,2}
			\draw (\dotcoord{1,1}) to (\dotcoord{3,2});
		\end{tikzpicture}
		\qquad
		\begin{tikzpicture}
			\dotrows[rowsep = 5mm, colsep = 4mm]{2,2,2}
			\draw (\dotcoord{1,2}) to (\dotcoord{2,1});
		\end{tikzpicture}
		\qquad
		\begin{tikzpicture}
			\dotrows[rowsep = 5mm, colsep = 4mm]{2,2,2}
			\draw (\dotcoord{1,2}) to (\dotcoord{2,2});
		\end{tikzpicture}
		\qquad
		\begin{tikzpicture}
			\dotrows[rowsep = 5mm, colsep = 4mm]{2,2,2}
			\draw (\dotcoord{1,2}) to (\dotcoord{3,1});
		\end{tikzpicture}
		\qquad
		\begin{tikzpicture}
			\dotrows[rowsep = 5mm, colsep = 4mm]{2,2,2}
			\draw (\dotcoord{1,2}) to[bend right=30] (\dotcoord{3,2});
		\end{tikzpicture}
		\qquad
		\begin{tikzpicture}
			\dotrows[rowsep = 5mm, colsep = 4mm]{2,2,2}
			\draw (\dotcoord{2,1}) to (\dotcoord{3,1});
		\end{tikzpicture}
		\qquad
		\begin{tikzpicture}
			\dotrows[rowsep = 5mm, colsep = 4mm]{2,2,2}
			\draw (\dotcoord{2,1}) to (\dotcoord{3,2});
		\end{tikzpicture}
		\qquad
		\begin{tikzpicture}
			\dotrows[rowsep = 5mm, colsep = 4mm]{2,2,2}
			\draw (\dotcoord{2,2}) to (\dotcoord{3,1});
		\end{tikzpicture}
		\qquad
		\begin{tikzpicture}
			\dotrows[rowsep = 5mm, colsep = 4mm]{2,2,2}
			\draw (\dotcoord{2,2}) to (\dotcoord{3,2});
		\end{tikzpicture}\\[3ex]
		&\begin{tikzpicture}
			\dotrows[rowsep = 5mm, colsep = 4mm]{2,2,2}
			\draw (\dotcoord{1,1}) to (\dotcoord{2,1});
			\draw (\dotcoord{1,2}) to (\dotcoord{3,1});
		\end{tikzpicture}
		\qquad
		\begin{tikzpicture}
			\dotrows[rowsep = 5mm, colsep = 4mm]{2,2,2}
			\draw (\dotcoord{1,1}) to (\dotcoord{2,1});
			\draw (\dotcoord{1,2}) to[bend right=30] (\dotcoord{3,2});
		\end{tikzpicture}
		\qquad
		\begin{tikzpicture}
			\dotrows[rowsep = 5mm, colsep = 4mm]{2,2,2}
			\draw (\dotcoord{1,1}) to (\dotcoord{2,2});
			\draw (\dotcoord{1,2}) to (\dotcoord{3,1});
		\end{tikzpicture}
		\qquad
		\begin{tikzpicture}
			\dotrows[rowsep = 5mm, colsep = 4mm]{2,2,2}
			\draw (\dotcoord{1,1}) to (\dotcoord{2,2});
			\draw (\dotcoord{1,2}) to[bend right=30] (\dotcoord{3,2});
		\end{tikzpicture}
		\qquad
		\begin{tikzpicture}
			\dotrows[rowsep = 5mm, colsep = 4mm]{2,2,2}
			\draw (\dotcoord{1,2}) to (\dotcoord{2,1});
			\draw (\dotcoord{1,1}) to[bend left=30] (\dotcoord{3,1});
		\end{tikzpicture}
		\qquad
		\begin{tikzpicture}
			\dotrows[rowsep = 5mm, colsep = 4mm]{2,2,2}
			\draw (\dotcoord{1,2}) to (\dotcoord{2,1});
			\draw (\dotcoord{1,1}) to (\dotcoord{3,2});
		\end{tikzpicture}
		\qquad
		\begin{tikzpicture}
			\dotrows[rowsep = 5mm, colsep = 4mm]{2,2,2}
			\draw (\dotcoord{1,2}) to (\dotcoord{2,2});
			\draw (\dotcoord{1,1}) to[bend left=30] (\dotcoord{3,1});
		\end{tikzpicture}
		\qquad
		\begin{tikzpicture}
			\dotrows[rowsep = 5mm, colsep = 4mm]{2,2,2}
			\draw (\dotcoord{1,2}) to (\dotcoord{2,2});
			\draw (\dotcoord{1,1}) to (\dotcoord{3,2});
		\end{tikzpicture}\\[3ex]
		&\begin{tikzpicture}
			\dotrows[rowsep = 5mm, colsep = 4mm]{2,2,2}
			\draw (\dotcoord{1,1}) to (\dotcoord{2,1});
			\draw (\dotcoord{2,2}) to (\dotcoord{3,1});
		\end{tikzpicture}
		\qquad
		\begin{tikzpicture}
			\dotrows[rowsep = 5mm, colsep = 4mm]{2,2,2}
			\draw (\dotcoord{1,1}) to (\dotcoord{2,1});
			\draw (\dotcoord{2,2}) to (\dotcoord{3,2});
		\end{tikzpicture}
		\qquad
		\begin{tikzpicture}
			\dotrows[rowsep = 5mm, colsep = 4mm]{2,2,2}
			\draw (\dotcoord{1,2}) to (\dotcoord{2,1});
			\draw (\dotcoord{2,2}) to (\dotcoord{3,1});
		\end{tikzpicture}
		\qquad
		\begin{tikzpicture}
			\dotrows[rowsep = 5mm, colsep = 4mm]{2,2,2}
			\draw (\dotcoord{1,2}) to (\dotcoord{2,1});
			\draw (\dotcoord{2,2}) to (\dotcoord{3,2});
		\end{tikzpicture}
		\qquad
		\begin{tikzpicture}
			\dotrows[rowsep = 5mm, colsep = 4mm]{2,2,2}
			\draw (\dotcoord{2,1}) to (\dotcoord{3,1});
			\draw (\dotcoord{2,2}) to (\dotcoord{1,1});
		\end{tikzpicture}
		\qquad
		\begin{tikzpicture}
			\dotrows[rowsep = 5mm, colsep = 4mm]{2,2,2}
			\draw (\dotcoord{2,1}) to (\dotcoord{3,1});
			\draw (\dotcoord{2,2}) to (\dotcoord{1,2});
		\end{tikzpicture}
		\qquad
		\begin{tikzpicture}
			\dotrows[rowsep = 5mm, colsep = 4mm]{2,2,2}
			\draw (\dotcoord{2,1}) to (\dotcoord{3,2});
			\draw (\dotcoord{2,2}) to (\dotcoord{1,1});
		\end{tikzpicture}
		\qquad
		\begin{tikzpicture}
			\dotrows[rowsep = 5mm, colsep = 4mm]{2,2,2}
			\draw (\dotcoord{2,1}) to (\dotcoord{3,2});
			\draw (\dotcoord{2,2}) to (\dotcoord{1,2});
		\end{tikzpicture}\\[3ex]
		&\begin{tikzpicture}
			\dotrows[rowsep = 5mm, colsep = 4mm]{2,2,2}
			\draw (\dotcoord{3,1}) to[bend right=30] (\dotcoord{1,1});
			\draw (\dotcoord{3,2}) to (\dotcoord{2,1});
		\end{tikzpicture}
		\qquad
		\begin{tikzpicture}
			\dotrows[rowsep = 5mm, colsep = 4mm]{2,2,2}
			\draw (\dotcoord{3,1}) to[bend right=30] (\dotcoord{1,1});
			\draw (\dotcoord{3,2}) to (\dotcoord{2,2});
		\end{tikzpicture}
		\qquad
		\begin{tikzpicture}
			\dotrows[rowsep = 5mm, colsep = 4mm]{2,2,2}
			\draw (\dotcoord{3,1}) to (\dotcoord{1,2});
			\draw (\dotcoord{3,2}) to (\dotcoord{2,1});
		\end{tikzpicture}
		\qquad
		\begin{tikzpicture}
			\dotrows[rowsep = 5mm, colsep = 4mm]{2,2,2}
			\draw (\dotcoord{3,1}) to (\dotcoord{1,2});
			\draw (\dotcoord{3,2}) to (\dotcoord{2,2});
		\end{tikzpicture}
		\qquad
		\begin{tikzpicture}
			\dotrows[rowsep = 5mm, colsep = 4mm]{2,2,2}
			\draw (\dotcoord{3,1}) to (\dotcoord{2,1});
			\draw (\dotcoord{3,2}) to (\dotcoord{1,1});
		\end{tikzpicture}
		\qquad
		\begin{tikzpicture}
			\dotrows[rowsep = 5mm, colsep = 4mm]{2,2,2}
			\draw (\dotcoord{3,1}) to (\dotcoord{2,1});
			\draw (\dotcoord{3,2}) to[bend left=30] (\dotcoord{1,2});
		\end{tikzpicture}
		\qquad
		\begin{tikzpicture}
			\dotrows[rowsep = 5mm, colsep = 4mm]{2,2,2}
			\draw (\dotcoord{3,1}) to (\dotcoord{2,2});
			\draw (\dotcoord{3,2}) to (\dotcoord{1,1});
		\end{tikzpicture}
		\qquad
		\begin{tikzpicture}
			\dotrows[rowsep = 5mm, colsep = 4mm]{2,2,2}
			\draw (\dotcoord{3,1}) to (\dotcoord{2,2});
			\draw (\dotcoord{3,2}) to[bend left=30] (\dotcoord{1,2});
		\end{tikzpicture}
		\qquad
	\end{align*}
	\begin{align*}
	&\longrightarrow\qquad
	\begin{tikzpicture}
		\dotrows[rowsep=6mm,colsep=0mm]{1,1,1}
	\end{tikzpicture}
	\qquad
	4\,\begin{tikzpicture}
		\dotrows[rowsep=6mm,colsep=0mm]{1,1,1}
		\draw[bend left=50] (\dotcoord{1,1}) to (\dotcoord{2,1});
	\end{tikzpicture}
	\qquad
	4\,\begin{tikzpicture}
		\dotrows[rowsep=6mm,colsep=0mm]{1,1,1}
		\draw[bend left=80] (\dotcoord{1,1}) to (\dotcoord{3,1});
	\end{tikzpicture}
	\qquad
	4\,\begin{tikzpicture}
		\dotrows[rowsep=6mm,colsep=0mm]{1,1,1}
		\draw[bend right=50] (\dotcoord{2,1}) to (\dotcoord{3,1});
	\end{tikzpicture}\qquad
	8\,\begin{tikzpicture}
		\dotrows[rowsep=6mm,colsep=0mm]{1,1,1}
		\draw (\dotcoord{1,1}) -- (\dotcoord{2,1});
		\draw (\dotcoord{2,1}) -- (\dotcoord{3,1});
	\end{tikzpicture}
	\qquad
	8\,\begin{tikzpicture}
		\dotrows[rowsep=6mm,colsep=0mm]{1,1,1}
		\draw[bend right=40] (\dotcoord{2,1}) to (\dotcoord{1,1});
		\draw[bend right=40] (\dotcoord{1,1}) to (\dotcoord{3,1});
	\end{tikzpicture}
	\qquad
	8\,\begin{tikzpicture}
		\dotrows[rowsep=6mm,colsep=0mm]{1,1,1}
		\draw[bend right=40] (\dotcoord{1,1}) to (\dotcoord{3,1});
		\draw[bend right=40] (\dotcoord{3,1}) to (\dotcoord{2,1});
	\end{tikzpicture}
	\end{align*}

	\begin{align*}
	&\leadsto\quad
	\I^6(f\odot g\odot h)
	+ 4\I^4(\operatorname{Sym}_2(f\star g)\odot h)
	+ 4\I^4(\operatorname{Sym}_2(f\star h)\odot g)
	+ 4\I^4(\operatorname{Sym}_2(g\star h)\odot f )
	\\
	&\hphantom{\leadsto\quad}
	+ 8\I^2(\operatorname{Sym}_2(f\star g\star h))
	+ 8\I^2(\operatorname{Sym}_2(h\star f\star g))
	+ 8\I^2(\operatorname{Sym}_2(f\star h\star g))\,.
	\end{align*}
\end{example}

We now consider series of Appell polynomials w.r.t.\ second chaos variables. The following proposition shows that the conditions of \autoref{thm:martin} are satisfied with an explicit choice of the radius.

\begin{proposition}\label{prop_suff_cond_second_chaos}
	Let $\mu$ be the law of a random vector in the second Wiener chaos $X = (\I^2(f^1), \ldots, \I^2(f^d))$ with $f^k \in \mathcal{H}^{\odot 2}$. Then $\M$ is defined and non-zero on the open polydisc $D_R$ for any $R\in(0,\infty]^d$ such that
		\begin{equation}\label{eq_condition_R}
			\sum_{k=1}^d R_k \|f^k\|_{\mathcal H^{\odot2}} < \frac{1}{\sqrt{2}}\,.
		\end{equation}
\end{proposition}

\begin{proof}
	Let $R$ be a generic polyradius satisfying \eqref{eq_condition_R}. Fix $r<R$, and, for $\sigma=(\sigma_1,\dots,\sigma_d)\in\{\pm1\}^d$, define $\A_{\sigma,r}:=\sum_{k=1}^d \sigma_k r_k \A_{f^k}$. Using the hypothesis on $R$ and the identity $\|\A_f\|_{\mathrm{HS}}=2^{-1/2}\|f\|_{\mathcal H^{\odot2}}$, one has
	\[
	\|\A_{\sigma,r}\|_{\mathrm{op}}
	\le \|\A_{\sigma,r}\|_{\mathrm{HS}}
	\le \frac{1}{\sqrt{2}}\sum_{k=1}^d r_k \|f_k\|_{\mathcal H^{\odot2}}
	<\frac12,
	\]
	and hence all eigenvalues of the self-adjoint Hilbert--Schmidt operator
	$\A_{\sigma,r}$ belong to $(-1/2,1/2)$. By linearity one has
	$\sum_{k=1}^d \sigma_k r_k X^k=\I^2\!\left(\sum_{k=1}^d \sigma_k r_k f^k\right)$ and, using the spectral representation of second-chaos random variables (see, e.g.,
	\cite[Proposition~2.7.13]{NP}), there exist i.i.d.\ standard Gaussian random
	variables $(\xi_n)_{n\ge1}$ such that
	\[
	\sum_{k=1}^d \sigma_k r_k X^k
	=
	\sum_{n\ge1}\lambda_n(\sigma,r)(\xi_n^2-1),
	\]
	where $(\lambda_n(\sigma,r))_{n\ge1}$ are the eigenvalues of $\A_{\sigma,r}$. Hence we obtain
	\[
	\mathbb E \exp\Bigl(\sum_{k=1}^d r_k |X^k|\Bigr)
	\le
	\sum_{\sigma\in\{\pm1\}^d}
	\mathbb E
	\exp\Bigl(\sum_{k=1}^d \sigma_k r_k X^k\Bigr)
	=
	\sum_{\sigma\in\{\pm1\}^d}\prod_{n\ge1}
	\mathbb E
	\exp\bigl(\lambda_n(\sigma,r)(\xi_n^2-1)\bigr).
	\]
	Because $|\lambda_n(\sigma,r)|<1/2$, $\mathbb E\exp\bigl(\lambda_n(\sigma,r)(\xi_n^2-1)\bigr)=e^{-\lambda_n(\sigma,r)}(1-2\lambda_n(\sigma,r))^{-1/2}$,
	and thus
	\[
	\mathbb E \exp\Bigl(\sum_{k=1}^d r_k |X^k|\Bigr)\leq\sum_{\sigma\in\{\pm1\}^d}\prod_{n\ge1}
	e^{-\lambda_n(\sigma,r)}(1-2\lambda_n(\sigma,r))^{-1/2}.
	\]
	Using the standard approximation
	\[
	e^{-\lambda}(1-2\lambda)^{-1/2}=1+O(\lambda^2) \qquad (\lambda\to0),
	\]
	and the fact that $(\lambda_n(\sigma,r))_{n\geq1}\in\ell^2$, the infinite product converges to a finite positive number. Consequently, $\M$ is well defined and holomorphic on $D_R$.	Now let \(z\in D_R\), work on the complexification of \(\mathcal H\), and set
\[
\A_z:=\sum_{k=1}^d z_k\A_{f^k}.
\]
Then \(\A_z\) is Hilbert--Schmidt and
\[
\|\A_z\|_{\mathrm{op}}
\leq \|\A_z\|_{\mathrm{HS}}
\leq \frac{1}{\sqrt{2}}
\sum_{k=1}^d |z_k|\|f^k\|_{\mathcal H^{\odot2}}
<\frac12.
\]
Define
\[
\Delta(z)
:=
\exp\left\{
-\sum_{m=2}^{\infty}
\frac{2^m}{m}\operatorname{Tr}(\A_z^m)
\right\}.
\]
This is well defined and holomorphic on \(D_R\). Indeed, for \(m\geq2\),
\(\A_z^m\) is trace class and
\[
|\operatorname{Tr}(\A_z^m)|
\leq
\|\A_z^2\|_1\|\A_z\|_{\mathrm{op}}^{m-2}
\leq
\|\A_z\|_{\mathrm{HS}}^2
\|\A_z\|_{\mathrm{op}}^{m-2}.
\]
Hence, on every compact set \(K\subset D_R\), the defining series for \(\Delta\) converges uniformly. Since \(z\mapsto\operatorname{Tr}(\A_z^m)\) is a
polynomial for every \(m\geq2\), it follows that \(\Delta\) is holomorphic.
Moreover, \(\Delta(z)\neq0\) on \(D_R\). For \(z\in D_R\cap\mathbb R^d\), the operator \(\A_z\) is self-adjoint.
Writing \((\mu_n(z))_{n\geq1}\) for its eigenvalues, the spectral
representation gives
\[
\EuScript M(z)^2
=
\prod_{n\geq1}
e^{-2\mu_n(z)}(1-2\mu_n(z))^{-1}.
\]
On the other hand, since
\(\sup_n 2|\mu_n(z)|<1\) and
\((\mu_n(z))_{n\geq1}\in\ell^2\),
\[
\begin{aligned}
\Delta(z)
&=
\exp\left\{
-\sum_{m=2}^{\infty}
\frac{2^m}{m}\sum_{n\geq1}\mu_n(z)^m
\right\} =
\prod_{n\geq1}
(1-2\mu_n(z))e^{2\mu_n(z)}.
\end{aligned}
\]
Consequently, $
\EuScript M(z)^2\Delta(z)=1$ for $ z\in D_R\cap\mathbb R^d$. Since both sides are holomorphic on \(D_R\), the identity theorem, applied successively in each variable, yields $\EuScript M(z)^2\Delta(z)=1$, for $z\in D_R$.Since \(\Delta\) is nowhere zero, \(\EuScript M(z)\neq0\) for every \(z\in D_R\).
\end{proof}

\section{Processes in the second chaos and the Rosenblatt process}\label{sec:2chaos_processes}

We now return to stochastic processes, specifically ones that take values in the second chaos. Let $X\colon [0,T]\times\Omega\to\mathbb R^d$ be a $d$-dimensional stochastic process. We assume that there exists a centred isonormal Gaussian process over a real separable Hilbert space $\mathcal H$ such that, for every $i\in \{1,\ldots,d\}$ and $t\in[0,T]$, $X_t^i=\I^2(f_t^i)$, where $f^i\colon [0,T]\to\mathcal H^{\odot2}$. In what follows, we will denote by $\A_t^i$ the Hilbert--Schmidt operator associated with $f_t^i$ in \eqref{eq:contract_op}. Moreover, we will need to consider $\D X=(\D X^1,\ldots,\D X^d)\colon [0,T]\times\Omega\times \mathcal H \to \mathbb{R}^d$, the Malliavin derivative of the process $X$, see \cite[Definition 2.3.2]{NP}, which is explicitly characterised, for every $i\in\{1,\ldots,d\}$ and $h\in\mathcal H$, by the relation
\begin{equation*}
\langle \D X_t^i,h\rangle
=
2\,\I^1(\A_t^i h)\,,
\end{equation*}
see \cite[Proposition 2.7.4]{NP}. For $t\in[0,T]$ and $i,j\in\{1,\ldots,d\}$, we also introduce the Malliavin covariance kernel, given by the random variable
\[
\langle \D X_t^i,\D X_t^j\rangle
=
4\,\operatorname{Tr}(\A_t^i\A_t^j)
+
2\I^2\!\bigl(f_t^i\star f_t^j+f_t^j\star f_t^i\bigr)\,.
\]
We now provide sufficient conditions on $f= (f^1,\ldots,f^d)\colon [0,T]\to (\mathcal H^{\odot2})^d$ for \autoref{conditions} to be satisfied.
\begin{proposition}\label{thm_suff_second}
Assume that the following properties hold true:
\begin{enumerate}
\item There exist $C>0$ and $\alpha>\frac12$ such that
\begin{equation}\label{first_cond_chaos}
\sum_{i=1}^d
\|f_t^i-f_s^i\|_{\mathcal H^{\odot 2}}^2
\le C|t-s|^{2\alpha},
\qquad s,t\in[0,T].
\end{equation}

\item For every $m\ge2$ and every $i_1,\ldots,i_m\in\{1,\ldots,d\}$, the trace coefficient
\[
\tau_{i_1,\ldots,i_m}(t_1,\ldots,t_m)
:=
\operatorname{Tr}\bigl(\A_{t_1}^{i_1}\cdots \A_{t_m}^{i_m}\bigr)
\]
is continuously differentiable on $[0,T]^m$, and there exist $\lambda>0$ and constants $C_m\ge0$ such that
\begin{equation}\label{second_cond_chaos_2}
\begin{split}
\sup_{\substack{t_1,\ldots,t_m\in[0,T]\\i_1,\ldots,i_m\\1\le r\le m}}
\left|
\partial_{t_r}
\tau_{i_1,\ldots,i_m}(t_1,\ldots,t_m)
\right|
&\le C_m\,,
\qquad
\sum_{m\ge2}\frac{2^{m-1}}{m}\,C_m\lambda^m<\infty.
\end{split}
\end{equation}

\item For every $t\in]0,T]$ the Malliavin matrix
$
\Gamma_t
=
\bigl(
\langle \D X_t^i,\D X_t^j\rangle_{\mathcal H}
\bigr)_{1\le i,j\le d}
$
satisfies
\begin{equation}\label{third_cond_chaos}
\mathbb E\bigl[\det\Gamma_t\bigr]>0.
\end{equation}
\end{enumerate}
Then \autoref{conditions} are satisfied for any polyradius $R$ such that
\begin{equation}\label{eq_choice_poly_R}
\begin{split}
|R|_\infty=\sup_{i=1,\ldots,d}R_i
&<
\min\left\{
\frac{1}{2\sqrt{2}M},
\frac{\lambda}{2d}
\right\}
\,\quad \text{with} \;M
:=
\sup_{t\in[0,T]}
\sum_{i=1}^d
\|f_t^i\|_{\mathcal H^{\odot2}}\,.
\end{split}
\end{equation}
\end{proposition}
\begin{proof}
Each condition in the statement will imply the corresponding property in \autoref{conditions}. Concerning property $1$, the reasoning is perfectly classical. Using the isometry of multiple Wiener integrals and hypercontractivity (see, e.g., \cite[Theorem~2.7.2]{NP}), for every $q\ge2$ there exists a constant $C_q>0$ such that
\[
\E\bigl[|X_t-X_s|^q\bigr]
\leq
C_q
\left(
\sum_{i=1}^d
\|f_t^i-f_s^i\|_{\mathcal H^{\odot2}}^2
\right)^{\frac{q}{2}}
\leq
C_q C^{q/2}
|t-s|^{q\alpha}.
\]
Choosing $q$ sufficiently large and applying Kolmogorov's continuity theorem,
$X$ admits a modification which is $\gamma$-Hölder continuous for some
$\gamma>\frac12$. Hence this modification has finite $p$-variation for every
$p>1/\gamma$, and in particular for some $p<2$. We now verify condition $2$. First, notice that \eqref{first_cond_chaos} implies
\[
\sum_{i=1}^d\|f_t^i\|_{\mathcal H^{\odot2}}
\leq
\sum_{i=1}^d\|f_0^i\|_{\mathcal H^{\odot2}}
+
\sqrt{d}
\left(
\sum_{i=1}^d
\|f_t^i-f_0^i\|_{\mathcal H^{\odot2}}^2
\right)^{1/2}
\leq
\sum_{i=1}^d\|f_0^i\|_{\mathcal H^{\odot2}}
+
\sqrt{dC}\,T^\alpha,
\]
and therefore $M<\infty$. For any $s,t\in[0,T]$ and
$z,w\in\mathbb C^d$, write $z\cdot X_s+w\cdot X_t
=
\I^2(z\cdot f_s+w\cdot f_t)$,
and let $B_{s,t}(z,w):=\sum_{i=1}^d z_i\A_s^i+\sum_{i=1}^d w_i\A_t^i$
be the Hilbert--Schmidt operator associated with
$z\cdot f_s+w\cdot f_t$. Using
$\|\A_f\|_{\mathrm{HS}}=2^{-1/2}\|f\|_{\mathcal H^{\odot2}}$, we easily obtain, for any polyradius $\rho$ and any $z,w\in D_\rho$,
\[
\|B_{s,t}(z,w)\|_{\mathrm{HS}}
\leq
\sqrt{2}M\sup_{i=1,\ldots,d}\rho_i\,.
\]
We choose now a polyradius $R=(R_1,\ldots,R_d)$ according to \eqref{eq_choice_poly_R}. By \autoref{prop_suff_cond_second_chaos}, applied to the $2d$-dimensional random vector $(X_s,X_t)$, its moment generating function is defined and non-zero on $D_R\times D_R$, uniformly in $s,t\in[0,T]$. Indeed,
\[
\sum_{i=1}^d
R_i
\bigl(
\|f_s^i\|_{\mathcal H^{\odot2}}
+
\|f_t^i\|_{\mathcal H^{\odot2}}
\bigr)
\leq
2|R|_\infty M
<
\frac{1}{\sqrt2}.
\]
It remains to prove the required regularity of the corresponding cumulant-generating function. From the computations of the cumulants of the single random variable $\I^2(z\cdot f_s+w\cdot f_t)$ in \eqref{eq:1D_cumulant}, one has, initially in a neighbourhood of the origin,
\[
\K_{s,t}(z,w)
=
\sum_{m\ge2}
\frac{1}{m!}\,
\kappa_m\!\left(z\cdot X_s+w\cdot X_t\right)
=
\sum_{m\ge2}
\frac{2^{m-1}}{m}
\operatorname{Tr}\!\left(B_{s,t}(z,w)^m\right).
\]
This series converges normally on $D_R\times D_R$. Indeed, using the bound
$
\left|
\operatorname{Tr}\!\left(B_{s,t}(z,w)^m\right)
\right|
\le
\|B_{s,t}(z,w)\|_{\mathrm{HS}}^m,
$
we obtain
\[
\sum_{m\ge2}
\frac{2^{m-1}}{m}
\left|
\operatorname{Tr}\!\left(B_{s,t}(z,w)^m\right)
\right|
\le
\sum_{m\ge2}
\frac{2^{m-1}}{m}
(\sqrt{2}|R|_\infty M)^m
<\infty,
\]
because $2\sqrt{2}|R|_\infty M<1$. The holomorphic function defined by the series agrees with $\K_{s,t}$ in a neighbourhood of the origin and therefore, by analytic continuation, on the whole of $D_R\times D_R$. Thus $\K$ is holomorphic on $D_R\times D_R$. It remains to verify the $C^1$-regularity in the time variable. Expanding the
trace gives
\[
\operatorname{Tr}\!\left(B_{s,t}(z,w)^m\right)
=
\sum_{\varepsilon_1,\ldots,\varepsilon_m\in\{s,t\}}
\sum_{i_1,\ldots,i_m=1}^d
u_{\varepsilon_1}^{i_1}\cdots u_{\varepsilon_m}^{i_m}\,
\tau_{i_1,\ldots,i_m}
(\varepsilon_1,\ldots,\varepsilon_m),
\]
where $u_s^i=z_i$ and $u_t^i=w_i$. Since the trace coefficients $\tau_{i_1,\ldots,i_m}$ are $C^1$, the map
\[
(s,t,z,w)
\longmapsto
\partial_t
\operatorname{Tr}\!\left(B_{s,t}(z,w)^m\right)
\]
is continuous for every $m\ge2$. Moreover, $\partial_t$ can act only on the slots for which $\varepsilon_r=t$, hence on at most $m$ slots. Therefore, using the uniform bound on the derivatives of the trace coefficients in \eqref{second_cond_chaos_2}, we deduce
\begin{align*}
\sup_{\substack{s,t\in[0,T]\\z,w\in D_R}}
\left|
\partial_t
\operatorname{Tr}\!\left(B_{s,t}(z,w)^m\right)
\right|
&\le
mC_m
\sum_{\varepsilon_1,\ldots,\varepsilon_m\in\{s,t\}}
\sum_{i_1,\ldots,i_m=1}^d
|R|_\infty^m=
mC_m(2d|R|_\infty)^m.
\end{align*}
Consequently,
\begin{align*}
\sum_{m\ge2}
\frac{2^{m-1}}{m}
\sup_{\substack{s,t\in[0,T]\\z,w\in D_R}}
\left|
\partial_t
\operatorname{Tr}\!\left(B_{s,t}(z,w)^m\right)
\right|
&\le
\sum_{m\ge2}
2^{m-1}C_m(2d|R|_\infty)^m
\\&=
\sum_{m\ge2}
m
\left(
\frac{2d|R|_\infty}{\lambda}
\right)^m
\frac{2^{m-1}}{m}C_m\lambda^m
<\infty.
\end{align*}
The last series is finite because $2d|R|_\infty<\lambda$, the sequence
$
m(2d|R|_\infty/\lambda)^m
$
is bounded, and \eqref{second_cond_chaos_2} holds. Thus the series
\[
\sum_{m\ge2}
\frac{2^{m-1}}{m}
\partial_t
\operatorname{Tr}\!\left(B_{s,t}(z,w)^m\right)
\]
converges uniformly on $[0,T]^2\times D_R\times D_R$. Since each term is
continuous in $(s,t,z,w)$, the sum is continuous. Hence we may differentiate
the cumulant series term by term, and this proves that $\partial_t\K$ exists, is continuous in $(s,t,z,w)$, and is uniformly bounded on
$[0,T]^2\times D_R\times D_R$. Passing to condition $3$, by \eqref{third_cond_chaos} and
\cite[Theorem~3.1]{NNP13}, the law of $X_t$ is absolutely continuous with respect to Lebesgue measure on $\mathbb R^d$ for every $t\in[0,T]$, and therefore condition $3$ in \autoref{conditions} is satisfied.
\end{proof}
Thanks to this result and the explicit form of the cumulants in \autoref{prop:second-chaos-cumulants}, we can now apply directly the results in \autoref{sec:ints} to derive the It\^o-Stratonovich and the It\^o formula for second chaos processes.

\begin{theorem}[It\^o formula for second-chaos processes]
Assume that $X$ satisfies the hypotheses of \autoref{thm_suff_second}, and let
$g=(g_1,\ldots,g_d)$ satisfy the hypotheses of \autoref{thm:martin} with
polyradius $r<R$, where $R$ is given by \eqref{eq_choice_poly_R}. Then, for
every $(s,t)\in\triangle_T$,
\begin{equation}\label{eq:second-chaos-strat-young}
\begin{split}
&\int_s^t g_j(X_u)\,\dia \dif X^j_u \\
={}&
\int_s^t g_j(X_u)\,\dif X^j_u  -
\sum_{n\geq 1}\frac{2^n\omega_{n+1}}{n!}
\int_s^t
\sum_{\Gamma\in\mathcal M_{\mathrm C}
(f^{i_1}_u,\ldots,f^{i_n}_u,f^j_{v})}
\partial_{i_1\cdots i_n}g_j(X_u)
\partial_v
\operatorname{Tr}(
\A_\Gamma
)\big|_{v = u} \dif u.
\end{split}
\end{equation}
If $g$ is scalar-valued and satisfies the same hypotheses, then
\begin{equation}\label{eq:second-chaos-ito}
\begin{split}
g(X)_{s,t}
={}&
\int_s^t \D g(X_u)\,\dia \dif X_u +
\sum_{m\geq 2}
\frac{2^{m-1}\omega_m}{m!}
\int_s^t
\sum_{\Gamma\in\mathcal M_{\mathrm C}
(f^{i_1}_u,\ldots,f^{i_m}_u)}
\partial_{i_1\cdots i_m}g(X_u)\,
\dif_u
\operatorname{Tr}(
\A_\Gamma
).
\end{split}
\end{equation}
Throughout the statement, repeated indices are summed over. Moreover, there exists a polyradius $\sigma$ such that, if
$r<\sigma\wedge R$, then we have the following equalities:
\begin{equation}\label{eq:second-chaos-expect}
\begin{split}
\mathbb E\int_s^t g_j(X_u)\,\dif X^j_u
={}&
\sum_{n\geq 1}\frac{2^n\omega_{n+1}}{n!}
\int_s^t
\sum_{\Gamma\in\mathcal M_{\mathrm C}
(f^{i_1}_u,\ldots,f^{i_n}_u,f^j_{v})}
\mathbb E\!\left[
\partial_{i_1\cdots i_n}g_j(X_u)
\right]
\partial_v
\operatorname{Tr}(
\A_\Gamma
)\big|_{v = u} \dif u,
\\
\mathbb{E}[g(X)_{s,t}]
={}&
\sum_{m\geq 2}
\frac{2^{m-1}\omega_m}{m!}
\int_s^t
\sum_{\Gamma\in\mathcal M_{\mathrm C}
(f^{i_1}_u,\ldots,f^{i_m}_u)}
\mathbb E\!\left[
\partial_{i_1\cdots i_m}g(X_u)
\right]\,
\dif_u \operatorname{Tr}\!\left(
\A_\Gamma
\right).
\end{split}
\end{equation}
\end{theorem}

\begin{proof}
The identities \eqref{eq:second-chaos-strat-young} and \eqref{eq:second-chaos-ito} follow by applying \autoref{thm:itostrat} and \autoref{thm:ito}, together with the explicit cumulant identities in \autoref{prop:second-chaos-cumulants}. To prove the identities in
\eqref{eq:second-chaos-expect}, it remains to verify the integrability assumptions in \autoref{thm:itostrat}. Fix $i\in\{1,\ldots,d\}$. From \eqref{first_cond_chaos}, the isometry of multiple Wiener integrals under our normalization, Nelson's hypercontractivity estimate \cite[Corollary~2.8.14]{NP}, and monotonicity of $L^q$-norms, for every $q\geq 1$ and every $(s,t)\in\triangle_T$ one has
\begin{equation}\label{eq_pointwise_estimates}
\|X^i_t-X^i_s\|_{L^q}
\lesssim q |t-s|^\alpha \,,
\qquad
\|X^i_0\|_{L^q}\lesssim q.
\end{equation}
Indeed, for $q\geq2$,
\[
\|X_t^i-X_s^i\|_{L^q}
\leq
(q-1)\|X_t^i-X_s^i\|_{L^2}
=
(q-1)\|f_t^i-f_s^i\|_{\mathcal H^{\odot2}},
\]
and similarly
$\|X_0^i\|_{L^q}\leq(q-1)\|f_0^i\|_{\mathcal H^{\odot2}}$.

We now pass from increment estimates to a supremum estimate. Since $\alpha>1/2$, choose $p\in(1,2)$ such that $1/p<\alpha$, then choose
$\beta\in(1/p,\alpha)$, and finally choose $r_0\geq1$ sufficiently large that $(\alpha-\beta)r_0>1$. By the GRR inequality (see \cite[Appendix~A2]{FV10}), applied pathwise to the continuous version of $X^i$, one has
\[
[X^i]_{\beta\text{-}\mathrm{Hl};[0,T]}
\lesssim
\left(
\int_0^T\int_0^T
\frac{|X^i_t-X^i_s|^{r_0}}{|t-s|^{\beta r_0+2}}
\,\dif s\,\dif t
\right)^{1/r_0}\,,
\]
where $[\cdot]_{\beta\text{-}\mathrm{Hl};[0,T]}$ is the $\beta$-H\"older seminorm of a path. For $q\geq r_0$, Minkowski's integral inequality and
\eqref{eq_pointwise_estimates} give
\begin{align*}
\left\|
[X^i]_{\beta\text{-}\mathrm{Hl};[0,T]}
\right\|_{L^q}
&\lesssim
\left(
\int_0^T\int_0^T
\frac{\|X^i_t-X^i_s\|_{L^q}^{r_0}}
{|t-s|^{\beta r_0+2}}
\,\dif s\,\dif t
\right)^{1/r_0}
\\
&\lesssim
q
\left(
\int_0^T\int_0^T
|t-s|^{(\alpha-\beta)r_0-2}
\,\dif s\,\dif t
\right)^{1/r_0}.
\end{align*}
The last integral is finite precisely because $(\alpha-\beta)r_0>1$. Thus, for each $i=1,\ldots,d$ and $q\geq r_0$,
\begin{equation}\label{eq:bound_Hoelder}
\left\|
[X^i]_{\beta\text{-}\mathrm{Hl};[0,T]}
\right\|_{L^q}
\lesssim q\,.
\end{equation}
For $1\leq q<r_0$, the same bound follows from monotonicity of $L^q$-norms, after enlarging the constant. Condition \eqref{eq:bound_Hoelder} is then enough to prove the finiteness of $\|X\|_{p\text{-}\mathrm{var};[0,T]}$ for some $p<2$. Indeed, using the deterministic embedding of H\"older paths into paths of finite $p$-variation, for every $q\geq1$ one has
\[
\E\left[
\|X^i\|_{p\text{-}\mathrm{var};[0,T]}^q
\right]
\leq
T^{\beta q}
\E\left[
[X^i]_{\beta\text{-}\mathrm{Hl};[0,T]}^q
\right]
<\infty .
\]
It follows, by the triangle inequality, that $\|X\|_{p\text{-}\mathrm{var};[0,T]}$ has moments of all orders for every $p\in(1,2)$ such that $1/p<\alpha$. From the same bound \eqref{eq:bound_Hoelder}, we can also show that $\E\exp\!\left(\sigma\cdot X^*\right)<\infty$ for a sufficiently small polyradius $\sigma$. Since
\[
\sup_{t\in[0,T]} |X^i_t|
\leq
|X^i_0|+
T^\beta [X^i]_{\beta\text{-}\mathrm{Hl};[0,T]},
\]
we deduce from \eqref{eq_pointwise_estimates} and
\eqref{eq:bound_Hoelder} that there exist constants $C_i>0$ such that the random variable $(X^i)^*= \sup_{t\in[0,T]}|X^i_t|
$ satisfies
\[
\left\|
(X^i)^*
\right\|_{L^q}
\leq
C_i q,
\qquad q\geq1.
\]
Therefore, for any polyradius $\sigma=(\sigma_1,\ldots,\sigma_d)\in(0,\infty)^d$ and any $q\geq 1$, one has on the random vector $X^*= ((X^1)^*,\ldots (X^d)^*) $ 
\[
\|\sigma\cdot X^*\|_{L^q}
\leq
\sum_{i=1}^d
\sigma_i
\left\|
(X^i)^*\right\|_{L^q}
\leq
q\sum_{i=1}^d \sigma_i C_i.
\]
Consequently, for every integer $k\geq1$,
\[
\E\big[(\sigma\cdot X^*)^k\big]
\leq
k^k
\left(
\sum_{i=1}^d \sigma_i C_i
\right)^k .
\]
Using Stirling's estimate $k!\geq(k/e)^k$, we obtain
\begin{align*}
\E\exp\!\left(\sigma\cdot X^*\right)
&=
\sum_{k=0}^{\infty}
\frac{\E[(\sigma\cdot X^*)^k]}{k!}
\leq
\sum_{k=0}^{\infty}
\left(
e\sum_{i=1}^d \sigma_i C_i
\right)^k .
\end{align*}
Therefore $
\E\exp\!\left(\sigma\cdot X^*\right)<\infty$
whenever $e\sum_{i=1}^d\sigma_iC_i<1$. For example, choosing any $\sigma$ such that
$|\sigma|_{\infty}<\bigl(e\sum_{i=1}^d C_i\bigr)^{-1}$ yields the conclusion.
\end{proof}

From the compact expression in \eqref{eq:1D_cumulant}, for a scalar process we obtain a simplified formula.
\begin{corollary}[The scalar second-chaos case]\label{prop:1d_Ito_second_chaos}
	 For any one-dimensional process $X$ satisfying the hypotheses of \autoref{thm_suff_second} and any $g\colon \mathbb{R}\to \mathbb{R}$ satisfying the hypotheses of \autoref{thm:martin} with
radius $r<R$, where $R$ is given by \eqref{eq_choice_poly_R} one has
	\begin{equation*}
		\begin{split}
			g(X)_{s,t}
			&=
			\int_s^t g'(X_u) \dia \dif X_u
			+
			\sum_{n = 2}^\infty
			\frac{2^{n-1}}{n}
			\int_s^t g^{(n)}(X_u)\,
			\dif_u \operatorname{Tr}(\A_u^n)\,.
		\end{split}
	\end{equation*}
\end{corollary}
\subsection{The Rosenblatt process}

We conclude the section by applying the preceding results to the
one-dimensional Rosenblatt process. Let $H\in(1/2,1)$. The Rosenblatt
process $X^H=(X_t^H)_{t\in[0,T]}$ is a centred second-chaos process over $\mathcal H=L^2(\mathbb R)$, namely $X_t^H=\I^2(f_t^H)$, $ t\in[0,T]$
where $f_t^H\in L^2(\mathbb R^2)$ is given by the symmetric function
\[
f_t^H(x_1,x_2)
=
c_H\int_0^t
(s-x_1)_+^{\frac H2-1}
(s-x_2)_+^{\frac H2-1}
\,\dif s .
\]
Here $x_+^\alpha=x^\alpha$ if $x>0$, and $x_+^\alpha=0$ otherwise. The constant $c_H>0$ is chosen so that $\E\big[(X_t^H)^2\big]=t^{2H}$. The Rosenblatt process was introduced by Taqqu \cite{Taqqu1975} as a
fundamental example of a non-Gaussian self-similar process with stationary
increments arising in non-central limit theorems. With the above
normalization, $X^H$ is $H$-self-similar and has stationary increments.
More precisely, when the process is constructed on $[0,\infty)$, for every
$a>0$,
\begin{equation*}
\bigl(X_{at}^H\bigr)_{t\ge0}
\stackrel{\mathrm{law}}{=}
\bigl(a^H X_t^H\bigr)_{t\ge0},
\end{equation*}
and, for every $h\ge0$,
\begin{equation*}
\bigl(X_{t+h}^H-X_h^H\bigr)_{t\ge0}
\stackrel{\mathrm{law}}{=}
\bigl(X_t^H\bigr)_{t\ge0}.
\end{equation*}
Together with the normalization $\E[(X_t^H)^2]=t^{2H}$, these properties imply that $X^H$ has the same covariance structure as fractional Brownian motion with Hurst parameter $H$. Nevertheless, $X^H$ is non-Gaussian, since it belongs to the second Wiener chaos. We refer to \cite{Tudor23} for a detailed construction of the Rosenblatt process and for its standard analytic and probabilistic properties. In our context, the Rosenblatt process is a very simple example of process satisfying the conditions of \autoref{thm_suff_second}.

\begin{proposition}
\label{prop:cum_rosenblatt}
The process $X^H$ satisfies the hypotheses of \autoref{thm_suff_second}.
Moreover, its $n$-th order cumulant function satisfies $\kappa_n(t)=t^{nH}\kappa_n(1)$ for every $n\geq2$.
\end{proposition}
\begin{proof}
We first verify the H\"older condition on the kernels. By the Wiener chaos
isometry, for $(s,t)\in\triangle_T$,
\[
\|f_t^H-f_s^H\|_{\calH^{\odot2}}^2
=
\E\bigl[(X_t^H-X_s^H)^2\bigr].
\]
Using stationarity of the increments, self-similarity, and the normalization
$\E[(X_1^H)^2]=1$, we get
\[
\|f_t^H-f_s^H\|_{\calH^{\odot2}}^2
=
\E\bigl[(X_{t-s}^H)^2\bigr]
=
(t-s)^{2H}.
\]
Thus \eqref{first_cond_chaos} holds with $\alpha=H>1/2$. We now prove the regularity of the trace coefficients. Let $\A_t^H$ be the
Hilbert--Schmidt operator on $\calH$ associated with $f_t^H$, i.e.
\begin{equation}\label{operator_rosenblatt}
\A_t^Hh(x_1)
=
\int_{\mathbb R}f_t^H(x_1,x_2)h(x_2)\,\dif x_2,
\end{equation}
and, for $n\geq2$, set
$\tau_n^H(t_1,\ldots,t_n)\coloneqq
\operatorname{Tr}(\A_{t_1}^H\cdots \A_{t_n}^H)$.
Writing the trace as an integral, we obtain
\begin{equation}\label{eq_cumulant}
\begin{split}
\tau_n^H(t_1,\ldots,t_n)
={}&
\int_{\mathbb R^n}
f_{t_1}^H(x_1,x_2)
f_{t_2}^H(x_2,x_3)
\cdots
f_{t_n}^H(x_n,x_1)
\,\dif x_1\cdots\dif x_n .
\end{split}
\end{equation}
Using the identity
\[
\int_{\mathbb R}
(u-x)_+^{\alpha-1}(v-x)_+^{\alpha-1}\,\dif x
=
B(\alpha,1-2\alpha)|u-v|^{2\alpha-1},
\qquad
0<\alpha<\frac12,
\]
with $\alpha=H/2$ and $B$ Euler's beta function, we get
\[
\tau_n^H(t_1,\ldots,t_n)
=
C_{H,n}
\int_0^{t_1}\cdots\int_0^{t_n}
\prod_{i=1}^n|s_i-s_{i+1}|^{H-1}
\,\dif s_1\cdots\dif s_n,
\]
where $s_{n+1}=s_1$ and
$
C_{H,n}
=
c_H^nB\!\left(\frac H2,1-H\right)^n
$. We now prove that this function belongs to $C^1([0,T]^n)$. Fix
$k\in\{1,\ldots,n\}$ and, for
$x\in[0,T]$ and $t_{\widehat k}:=(t_i)_{i\ne k}$, define
\[
G_k(x;t_{\widehat k})
:=
C_{H,n}
\int_{\prod_{i\ne k}[0,t_i]}
\left.
\prod_{i=1}^n|s_i-s_{i+1}|^{H-1}
\right|_{s_k=x}
\prod_{i\ne k}\dif s_i.
\]
Then
\[
\tau_n^H(t_1,\ldots,t_n)
=
\int_0^{t_k}G_k(x;t_{\widehat k})\,\dif x,
\]
so it is enough to prove that $G_k$ is finite and continuous. Introduce
$\rho(x):=|x|^{H-1}\mathbf 1_{\{0<|x|\leq T\}}$. By non-negativity and
extension of the integration domain,
\[
0\leq G_k(x;t_{\widehat k})
\leq
C_{H,n}(\rho^{*n})(0)
\leq
C_{H,n}\|\rho\|_2^2\|\rho\|_1^{n-2}.
\]
Since $H>1/2$,
\[
\rho\in L^1(\mathbb R)\cap L^2(\mathbb R),
\qquad
\|\rho\|_1=\frac{2T^H}{H},
\qquad
\|\rho\|_2^2=\frac{2T^{2H-1}}{2H-1},
\]
and hence $G_k$ is uniformly finite. To prove continuity, let $\rho_m\in C_c(\mathbb R)$ be such that
$\rho_m\to\rho$ in $L^1(\mathbb R)\cap L^2(\mathbb R)$, and define
\[
G_{k,m}(x;t_{\widehat k})
:=
C_{H,n}
\int_{\prod_{i\ne k}[0,t_i]}
\prod_{i=1}^n\rho_m(s_i-s_{i+1})
\prod_{i\ne k}\dif s_i,
\qquad s_k=x.
\]
Each $G_{k,m}$ is continuous. Using
\[
\prod_{i=1}^na_i-\prod_{i=1}^nb_i
=
\sum_{j=1}^n
(a_j-b_j)
\prod_{i<j}a_i
\prod_{i>j}b_i
\]
and the same convolution estimate as above, we obtain
\[
\sup_{x,t_{\widehat k}}
|G_{k,m}(x;t_{\widehat k})-G_k(x;t_{\widehat k})|
\leq
C_{H,n}n\,
\|\rho_m-\rho\|_2\,M_2\,M_1^{n-2},
\]
where
\[
M_1:=\|\rho\|_1+\sup_m\|\rho_m\|_1,
\qquad
M_2:=\|\rho\|_2+\sup_m\|\rho_m\|_2.
\]
Thus $G_{k,m}\to G_k$ uniformly, and therefore $G_k$ is continuous. It
follows that $\tau_n^H\in C^1([0,T]^n)$ and
\[
\partial_{t_k}\tau_n^H(t_1,\ldots,t_n)
=
G_k(t_k;t_{\widehat k}).
\]
Moreover,
\[
\sup_{t_1,\ldots,t_n\in[0,T]}
|\partial_{t_k}\tau_n^H(t_1,\ldots,t_n)|
\leq
C_{H,n}\|\rho\|_2^2\|\rho\|_1^{n-2}.
\]
Thus, setting
\[
C_n
\coloneqq
c_H^nB\!\left(\frac H2,1-H\right)^n
\|\rho\|_2^2\|\rho\|_1^{n-2},
\]
the sequence $(C_n)_{n\geq2}$ has at most exponential growth, and hence
\[
\sum_{n\geq2}
\frac{2^{n-1}}{n}C_n(\lambda_{T,H})^n<\infty
\]
for all sufficiently small $\lambda_{T,H}>0$ depending on $H$ and $T$. This proves
\eqref{second_cond_chaos_2}. Concerning condition \eqref{third_cond_chaos}, for every $t>0$ the random
variable $X_t^H$ admits a density. Hence, by
\cite[Theorem~3.1]{NNP13}, one has
$\mathbb E[\det\Gamma_t]>0$, and therefore
\eqref{third_cond_chaos} holds for every $t\in(0,T]$. It remains to prove the scaling identity for the one-point cumulants. Since
the Rosenblatt process is $H$-self-similar and cumulants are homogeneous under multiplication by constants, for every $n\geq2$ we obtain
\[
\kappa_n[X_t^H]
=
\kappa_n[t^HX_1^H]
=
t^{nH}\kappa_n[X_1^H].
 \qedhere\]
\end{proof}
It then follows from
\autoref{prop:1d_Ito_second_chaos}  the following It\^o formula for the Rosenblatt process.

\begin{corollary}[It\^o formula for the Rosenblatt process]\label{corollary_Ito_Rosenblatt}
There exists a radius $R_{T,H}>0$ such that, for any
$f\colon\mathbb R\to\mathbb R$ satisfying the hypotheses of
\autoref{thm:martin} with radius $r<R_{T,H}$, one has
\begin{equation*}
\begin{split}
f(X^H)_{s,t}
&=
\int_s^t f'(X_u^H) \dia \dif X_u^H
+
\sum_{n = 2}^\infty
\frac{H\,\kappa_n(1)}{(n-1)!}
\int_s^t f^{(n)}(X_u^H)\,
u^{nH-1}\,\dif u,
\end{split}
\end{equation*}
with $\kappa_n(1) = 2^{n-1}(n-1)! \operatorname{Tr}((\A_{1}^H)^n)$ given by \eqref{eq_cumulant}.
\end{corollary}

\begin{remark}
The strategy used in the proof of \autoref{prop:cum_rosenblatt} suggests a possible extension to Hermite processes of arbitrary order. These processes share the self-similarity and stationary-increment properties of fractional Brownian motion and the Rosenblatt process, and arise in Dobrushin--Major type non-central limit theorems \cite{DobMaj}; see also \cite{Tudor23}. The higher-order case, however, presents two main difficulties. First, the cumulant formula no longer reduces to the trace-type expressions available in the Rosenblatt case, and secondly the only admissible integrands will be polynomials, as explained in \autoref{rem:notexp}.
\end{remark}

We end by comparing the Wick integral to the existing literature, specifically the work of \cite{Arras15}. We also mention that a related change-of-variable formula for systems involving the Rosenblatt process and fractional Brownian motion was obtained in \cite{coupek_2022}.
The framework of \cite{Arras15} is different from ours. Instead of using the Wick product associated with the law of $X^H$, which we continue denoting by $\dia$, the author works in the white-noise space generated by the underlying Gaussian noise $W$. The corresponding Wick product will be denoted by $\dia_W$. This has the advantage that one can use the full machinery of white-noise analysis and Hida distributions, at the price of keeping track of objects which are defined through the underlying Gaussian noise rather than through the process $X^H$ alone. More precisely, in \cite{Arras15} one works with the Hida triple $ \mathcal S \subset L^2(\Omega) \subset \mathcal S^*$, see, for instance, \cite{Kuo96} for proper definition. In this setting the Wick product $\dia_W$ and the time integral extend naturally to suitable $\mathcal S^*$-valued processes. The Rosenblatt noise is then interpreted as a Hida-distribution-valued derivative $\dot X^H_t$, and the stochastic integral is defined by
\begin{equation}\label{eq:arras_int}
 \int_s^t \varphi_r \,\dia_W\, \dot X^H_r\,\dif r .
 \end{equation}
The same formalism and integration also applied to the auxiliary family of second-chaos processes
\begin{equation}\label{def_aux}
 X^{H,n}_t=\I^2(\underbrace{f_t^H \star \cdots \star f_t^H}_{n \text{ times}})\,.
 \end{equation}
Thus $X^{H,1}=X^H$, whereas the higher processes $X^{H,n}$ are constructed from the same underlying Gaussian noise and are, in general, not measurable with respect to the process $X^H$ alone. The choice of considering the one-dimensional kernel $f^H$, implies that in \eqref{def_aux} there is no need of symmetrisation.  In the polynomial case the white-noise integral in \eqref{eq:arras_int} admits a concrete interpretation as the limit of Wick--Riemann sums. This observation allows one to connect the Hida-distribution formulation of \cite{Arras15} with the Riemann-sum point of view used below.

\begin{proposition} Fix $k\geq1$. For every polynomial $p:\mathbb{R}\to\mathbb{R}$, the process $r\to p(X^H_r)\diamond_W \dot X^{H,k}_r$ is $(\mathcal S)^*$-integrable on $[s,t]$. Moreover, the integral belongs to $L^2(\Omega)$, and for every sequence of partitions $\pi_m$ of $[s,t]$  with $|\pi_m|\to0$, one has
\begin{equation}\label{eq:convergence_rosenblatt}
\int_s^t p(X^H_r)\diamond_W \dot X^{H,k}_r\,\dif r = \lim_{m\to\infty} \sum_{[u,v]\in\pi_m} p(X^H_u)\diamond_W X^{H,k}_{u,v} \qquad\text{in }L^2(\Omega)\,.
\end{equation}
This limit coincides with the Wick integral with respect to $\diamond_W$ and we will denote it by $\int_s^t p(X^H_r)\diamond_W  \dif X^{H,k}_r\,$.
\end{proposition}

\begin{proof}
Write $X^H_t=\I^2(f^H_t)$ and $X^{H,k}_t=\I^2(f^{H,k}_t)$, so that $X^{H,k}_{u,v}=\I^2(f^{H,k}_{u,v})$. From the proof of \autoref{prop:cum_rosenblatt}, we already know that $t\mapsto f^H_t$ is $H$-Hölder continuous with values in $\mathcal H^{\odot 2}$. Since $\star$ is a continuous map $\mathcal H^{\odot 2}\times\mathcal H^{\odot 2}\to\mathcal H^{\odot 2}$, it follows that $t\mapsto f^{H,k}_t$ is also $H$-Hölder continuous. We first show that the right-hand side of \eqref{eq:convergence_rosenblatt} is well-defined. Since $X^H_t=\I^2(f^H_t)$, the random variable $p(X^H_t)$ belongs to a finite sum of Wiener chaoses. Hence, by the product formula for multiple Wiener integrals with respect to $\dia_W$, there exist an integer $N\geq0$ and symmetric kernels $g^q_t\in\mathcal H^{\odot q}$, for $0\leq q\leq N$, such that $p(X^H_t)=\sum_{q=0}^{N}\I^q(g_t^q)\,$. The kernels $g_t^q$ are finite linear combinations of iterated contractions of copies of $f^{H,l}_t$, with $l\geq1$. Therefore the maps $t\mapsto g_t^q$ are bounded and $H$-Hölder continuous. Now fix a partition $\pi_m$ of $[s,t]$. Using the defining property of the Gaussian Wick product, $ \I^q(g)\diamond_W \I^2(h)=\I^{q+2}(g\odot h), $ we obtain
\[\sum_{[u,v]\in\pi_m} p(X^H_u)\diamond_W X^{H,k}_{u,v} = \sum_{[u,v]\in\pi_m} \sum_{q=0}^{N} \I^q(g^q_u)\diamond_W \I^2(f^{H,k}_{u,v})= \sum_{q=0}^{N} \I^{q+2} \left( \sum_{[u,v]\in\pi_m} g^q_u\odot f^{H,k}_{u,v} \right). \]
 Since $g^q$ and $f^{H,k}$ are $H$-Hölder continuous, $H+H>1$, and $\odot$ is continuous, for each $q$ the preceding Riemann sums converge to the Young integral
 \[ \sum_{[u,v]\in\pi_m} g^q_u\odot f^{H,k}_{u,v} \longrightarrow \int_s^t g^q_r\odot \dif f^{H,k}_r \qquad\text{in }\mathcal H^{\odot(q+2)} . \]
  Consequently, by the Wiener isometry and the orthogonality of different chaoses,
 \[ \sum_{[u,v]\in\pi_m} p(X^H_u)\diamond_W X^{H,k}_{u,v} \longrightarrow \sum_{q=0}^{N} \I^{q+2} \left( \int_s^t g^q_r\odot \dif f^{H,k}_r \right) \qquad\text{in }L^2(\Omega). \]
 In particular, the limit belongs to $L^2(\Omega)$. It remains to identify this $L^2(\Omega)$-limit with the Hida integral in the left-hand side of \eqref{eq:convergence_rosenblatt}.  The $(\mathcal S)^*$-integrability of $r\mapsto p(X^H_r)\diamond_W \dot X^{H,k}_r$ follows from \cite[Theorem 3.16 and Remark 3.17]{Arras15}. Hence the $(\mathcal S)^*$-valued integral $\int_s^t p(X^H_r)\diamond_W\dot X^{H,k}_r\,\dif r$
  is well-defined and is characterized by \[ S\left( \int_s^t p(X^H_r)\diamond_W\dot X^{H,k}_r\,\dif r \right)(\xi) = \int_s^t S\bigl(p(X^H_r)\diamond_W\dot X^{H,k}_r\bigr)(\xi)\,\dif r , \]
   see \cite[Theorem 13.4]{Kuo96}. Here $S\colon(\mathcal S)^*\to (\mathcal S(\mathbb R)\to\mathbb R)$ denotes the $S$-transform, where $\mathcal S(\mathbb R)$ is the space of Schwartz functions; see \cite[Definition 5.8]{Kuo96}. We shall also use the characterization of the Gaussian Wick product by the $S$-transform:
   \[ S(\Phi\diamond_W\Psi)(\xi) = S(\Phi)(\xi)\,S(\Psi)(\xi), \qquad \xi\in\mathcal S(\mathbb R), \]
   for any $\Phi,\Psi\in(\mathcal S)^*$; see, for instance, \cite[Theorem 1.3]{Arras15}. Let $Y$ be the $L^2(\Omega)$-limit of the Wick--Riemann sums obtained above. Since $L^2(\Omega)$-convergence implies convergence of $S$-transforms, for every $\xi\in\mathcal S(\mathbb R)$ we have
    \begin{align*} S(Y)(\xi) &= \lim_{m\to\infty} \sum_{[u,v]\in\pi_m} S\bigl(p(X^H_u)\diamond_W X^{H,k}_{u,v}\bigr)(\xi) = \lim_{m\to\infty} \sum_{[u,v]\in\pi_m} S(p(X^H_u))(\xi)\, S(X^{H,k}_{u,v})(\xi).
     \end{align*}
      By definition of the Hida-distributional derivative $\dot X^{H,k}_r$, $X^{H,k}_{u,v} = \int_u^v \dot X^{H,k}_r\,\dif r$ in $(\mathcal S)^*$. Applying the $S$-transform yields $S(X^{H,k}_{u,v})(\xi) = \int_u^v S(\dot X^{H,k}_r)(\xi)\,\dif r$.
      Therefore \[ S(Y)(\xi) = \lim_{m\to\infty} \sum_{[u,v]\in\pi_m} S(p(X^H_u))(\xi) \int_u^v S(\dot X^{H,k}_r)(\xi)\,\dif r . \]
      The map $r\mapsto S(p(X^H_r))(\xi)$ is continuous on $[s,t]$. Indeed, writing $p(X^H_r)=\sum_{q=0}^{N}\I^q(g^q_r)$, we have
     \[
S(p(X^H_r))(\xi)
=
\sum_{q=0}^{N}
\big\langle g^q_r,\xi^{\otimes q}\big\rangle_{\mathcal H^{\otimes q}}
=
\sum_{q=0}^{N}
\frac{1}{q!}
\big\langle g^q_r,\xi^{\otimes q}\big\rangle_{\mathcal H^{\odot q}},
\]
      and each map $r\mapsto g^q_r$ is continuous. Moreover, by the definition of the Hida-distributional derivative and by the explicit formulae for $\dot X^{H,k}$ in \cite[Lemma 3.7]{Arras15}, the map $r\mapsto S(\dot X^{H,k}_r)(\xi)$ belongs to $L^1([s,t])$. Hence the preceding weighted Riemann sums converge to
      \begin{align*}
      \lim_{m\to\infty} \sum_{[u,v]\in\pi_m} S(p(X^H_u))(\xi) \int_u^v S(\dot X^{H,k}_r)(\xi)\,\dif r &= \int_s^t S(p(X^H_r))(\xi)\, S(\dot X^{H,k}_r)(\xi)\,\dif r \\ &= S\left( \int_s^t p(X^H_r)\diamond_W\dot X^{H,k}_r\,\dif r \right)(\xi)\,.
      \end{align*}
     Thus the $S$-transforms of $Y$ and of the Hida integral coincide for every $\xi\in\mathcal S(\mathbb R)$. Since the $S$-transform is injective on $(\mathcal S)^*$, the $L^2(\Omega)$-limit of the Wick--Riemann sums coincides with the Hida integral.
 \end{proof}
 Combining the change of chaos identity \eqref{eq:change_second} with the convergence in \eqref{eq:convergence_rosenblatt}, we derive the It\^o formula \cite[Theorem 3.16]{Arras15} as a conseguence of ours; since this result is not proved for integrands satisfying our assumptions we only handle the polynomial case.
 
\begin{theorem}\label{prop:second-chaos-integrals}
For every polynomial $p:\mathbb R\to\mathbb R$ one has
\begin{equation}\label{change_of_chaos_integrals}
\begin{split}
 \int_s^t p(X^H_u)\dia \dif X^H_u
 ={}& \int_s^t p(X^H_u)\dia_W \dif X^H_u + \sum_{n=2}^{\infty} 2^{n-1} \int_s^t p^{(n-1)}(X^H_u)\dia_W \dif X^{H,n}_u \,.
\end{split}
 \end{equation}
As a conseguence, we obtain \cite[Theorem 3.16]{Arras15}
\begin{equation}\label{change_of_chaos_integrals_ito}
\begin{split}
 p(X^H)_{s,t} ={}& \int_s^t p'(X^H_u)\dia_W \dif X^H_u + \sum_{n=2}^{\infty} 2^{n-1} \int_s^t p^{(n)}(X^H_u)\dia_W \dif X^{H,n}_u \\
 &+ \sum_{n=2}^{\infty} \frac{H\kappa_n(1)}{(n-1)!} \int_s^t p^{(n)}(X^H_u)u^{Hn-1}\,\dif u \,.
\end{split}
\end{equation}
\end{theorem}

\begin{proof}
It is enough to prove \eqref{change_of_chaos_integrals}. We first consider
the Appell monomials \(p(x)=x^{\dia k}\), with \(k\geq0\), and prove that
\begin{equation}\label{thesis_polynomials}
\begin{split}
\int_s^t (X^H_u)^{\dia k}\dia \dif X^H_u
={}&
\int_s^t (X^H_u)^{\dia k}\dia_W \dif X^H_u
+
\sum_{n=2}^{k+1}
2^{n-1}\frac{k!}{(k-n+1)!}
\int_s^t
(X^H_u)^{\dia(k-n+1)}
\dia_W \dif X^{H,n}_u .
\end{split}
\end{equation}
The goal is to derive an identity among the Riemann sums defining the integrals in \eqref{thesis_polynomials} up to a remainder that will converge to $0$;
By Fixing  \(s\leq u<v\leq t\) and writing $(X_u^H)^{\dia k}\dia X_{u,v}^H$ according to \eqref{eq:change_second} we have

\begin{equation}\label{eq:forest_increment_expansion}
\begin{split}
(X_u^H)^{\dia k}\dia X_{u,v}^H
={}&
\sum_{\substack{
\Gamma\in\mathcal M_{\mathrm F}
(f_u^H,\ldots,f_u^H,f^H_{u,v})}}
2^{|V(\Gamma)|}
\I^{2(k+1)-2|E(\Gamma)|}(f_\Gamma^H),
\end{split}
\end{equation}
where \(V(\Gamma) \,, E(\Gamma)\) denote the set of non-isolated vertices and edges of \(\Gamma\). We organise the terms according to the connected component containing the distinguished vertex \(f^H_{u,v}\). If \(f^H_{u,v}\) is isolated, then the remaining
forest involves only the \(k\) copies of \(f_u^H\). Applying
\eqref{eq:change_second} to these remaining vertices shows that the
corresponding terms, together with the first term in
\eqref{eq:forest_increment_expansion}, sum to
\[
(X_u^H)^{\dia k}\dia_W X_{u,v}^H.
\]
We now fix \(n\in\{2,\ldots,k+1\}\) and consider the forests for which
the component containing \(f^H_{u,v}\) has \(n\) vertices. Introduce the
symmetric kernel
\begin{equation}\label{eq:linear_kernel_increment}
L_{u,v}^{H,n}
:=
\sum_{j=0}^{n-1}
\operatorname{Sym}_2\left(
\underbrace{f_u^H\star\cdots\star f_u^H}_{j\text{ factors}}
\star f^H_{u,v}\star
\underbrace{f_u^H\star\cdots\star f_u^H}_{n-1-j\text{ factors}}
\right),
\end{equation}
where an empty block is omitted. Since transposition reverses a
\(\star\)-product, the summands corresponding to \(j\) and \(n-1-j\)
are transposes of one another. Consequently,
\begin{equation}\label{eq:linear_kernel_without_sym}
L_{u,v}^{H,n}
=
\sum_{j=0}^{n-1}
\underbrace{f_u^H\star\cdots\star f_u^H}_{j\text{ factors}}
\star f^H_{u,v}\star
\underbrace{f_u^H\star\cdots\star f_u^H}_{n-1-j\text{ factors}} .
\end{equation}
To compute the multiplicity, first choose the \(n-1\) copies of \(f_u^H\) belonging to the distinguished component. For any fixed choice, the oriented paths are obtained by choosing the position of \(f^H_{u,v}\) and
ordering the remaining \(n-1\) labelled vertices. Since a path and its reverse determine the same symmetric contraction, the sum over all unoriented paths on these vertices is
\[
\frac{(n-1)!}{2}\,L_{u,v}^{H,n}.
\]
There are \(\binom{k}{n-1}\) possible choices of the \(n-1\) vertices. Hence the sum of the contractions associated with all possible distinguished components is
\[
\binom{k}{n-1}\frac{(n-1)!}{2}\,
L_{u,v}^{H,n}
=
\frac{k!}{2(k-n+1)!}\,L_{u,v}^{H,n}.
\]
Each such component contains \(n\) non-isolated vertices and therefore
carries the factor \(2^n\). The remaining \(k-n+1\) copies of \(f_u^H\) form an arbitrary forest. Summing over these remaining forests and applying \eqref{eq:change_second} once more gives \((X_u^H)^{\dia(k-n+1)}\). Since symmetric tensor products correspond
to Gaussian Wick products, the total contribution at level \(n\) is
therefore
\[
2^{n-1}\frac{k!}{(k-n+1)!}
(X_u^H)^{\dia(k-n+1)}
\dia_W \I^2(L_{u,v}^{H,n}).
\]
We have thus obtained the exact identity
\begin{equation}\label{eq:exact_linear_identity}
\begin{split}
(X_u^H)^{\dia k}\dia X_{u,v}^H
={}&
(X_u^H)^{\dia k}\dia_W X_{u,v}^H+
\sum_{n=2}^{k+1}
2^{n-1}\frac{k!}{(k-n+1)!}
(X_u^H)^{\dia(k-n+1)}
\dia_W \I^2(L_{u,v}^{H,n}).
\end{split}
\end{equation}
We next compare \(L_{u,v}^{H,n}\) with the increment of
\(f^{H,n}\), the kernel of $X^{H,n}$. By associativity and bilinearity of \(\star\),
\[
f_v^{H,n}
=
\underbrace{(f_u^H+f^H_{u,v})\star\cdots\star(f_u^H+f^H_{u,v})}_{n\text{ factors}}.
\]
After expanding this product, the term containing no copy of \(f^H_{u,v}\) is
\(f_u^{H,n}\), while the sum of the terms containing exactly one copy
of \(f^H_{u,v}\) is \(L_{u,v}^{H,n}\), by
\eqref{eq:linear_kernel_without_sym}. It follows that
\begin{equation}\label{eq:increment_kernel_decomposition}
f_{u,v}^{H,n}
=
L_{u,v}^{H,n}+Q_{u,v}^{H,n},
\end{equation}
where \(Q_{u,v}^{H,n}\) is the sum of all words containing at least two
copies of \(f_{u,v}^H\), which is   is symmetric. Substituting
\[
\I^2(L_{u,v}^{H,n})
=
X_{u,v}^{H,n}-\I^2(Q_{u,v}^{H,n})
\]
into \eqref{eq:exact_linear_identity}, we obtain
\begin{equation}\label{eq:exact_identities}
\begin{split}
(X_u^H)^{\dia k}\dia X_{u,v}^H
={}&
(X_u^H)^{\dia k}\dia_W X_{u,v}^H
+
\sum_{n=2}^{k+1}
2^{n-1}\frac{k!}{(k-n+1)!}
(X_u^H)^{\dia(k-n+1)}
\dia_W X_{u,v}^{H,n}
+
R_{u,v}^{(k)},
\end{split}
\end{equation}
where
\[
R_{u,v}^{(k)}
:=
-
\sum_{n=2}^{k+1}
2^{n-1}\frac{k!}{(k-n+1)!}
(X_u^H)^{\dia(k-n+1)}
\dia_W \I^2( (Q_{u,v}^{H,n})).
\]
We now estimate the remainder. Recall that
\[
\sup_{r\in[0,T]}
\|f_r^H\|_{\mathcal H^{\odot 2}}<\infty,
\qquad
\|f_{u,v}^H\|_{\mathcal H^{\odot 2}}
\lesssim |v-u|^H.
\]
Every term in \(Q_{u,v}^{H,n}\) is an \(n\)-fold \(\star\)-product
containing at least two copies of \(f_{u,v}^H\). By continuity of \(\star\) on \(\mathcal H^{\otimes2}\) and since \(n\) is fixed,
\[
\|Q_{u,v}^{H,n}\|_{\mathcal H^{\odot 2}}
\lesssim |v-u|^{2H}.
\]
Then the Wiener isometry gives
\[
\|\I^2(Q_{u,v}^{H,n})\|_{L^2(\Omega)}
\lesssim |v-u|^{2H}.
\]
For fixed \(k\) and \(n\), the family
\[
\left\{
(X_u^H)^{\dia(k-n+1)}:u\in[s,t]
\right\}
\]
belongs to a fixed finite sum of Gaussian chaoses. Moreover, the
corresponding kernels are uniformly bounded in \(u\). Therefore there
exists a constant \(C_{k,n}\) such that, for every
\(q\in\mathcal H^{\odot2}\),
\[
\sup_{u\in[s,t]}
\left\|
(X_u^H)^{\dia(k-n+1)}
\dia_W\I^2(q)
\right\|_{L^2(\Omega)}
\leq
C_{k,n}\|q\|_{\mathcal H^{\odot2}}.
\]
Consequently,\(
\|R_{u,v}^{(k)}\|_{L^2(\Omega)}
\lesssim |v-u|^{2H}\). Let \(\pi_m\) be a sequence of partitions of \([s,t]\) such that \(|\pi_m|\to0\). Then
\[
\begin{aligned}
\left\|
\sum_{[u,v]\in\pi_m}R_{u,v}^{(k)}
\right\|_{L^2(\Omega)}
&\leq
\sum_{[u,v]\in\pi_m}
\|R_{u,v}^{(k)}\|_{L^2(\Omega)}
\lesssim
\sum_{[u,v]\in\pi_m}|v-u|^{2H}
\leq
|\pi_m|^{2H-1}(t-s).
\end{aligned}
\]
Since \(H>1/2\), this converges to \(0\). Passing to Wick--Riemann sums
in \eqref{eq:exact_identities} and using
\eqref{eq:convergence_rosenblatt} proves
\eqref{thesis_polynomials}. We finally pass to a generic polynomial \(p\). Let \(N=\deg p\) and set $a_k(u):=\E[p^{(k)}(X_u^H)]$. By \autoref{cor_stroock},
\begin{equation}\label{eq:appell_expansion_p}
p(X_u^H)
=
\sum_{k=0}^N
\frac{a_k(u)}{k!}(X_u^H)^{\dia k}.
\end{equation}
Moreover, applying the same expansion to \(p^{(n-1)}\) yields
\begin{equation}\label{eq:appell_expansion_derivative}
p^{(n-1)}(X_u^H)
=
\sum_{k=n-1}^N
\frac{a_k(u)}{(k-n+1)!}
(X_u^H)^{\dia(k-n+1)}.
\end{equation}
The functions \(u\mapsto a_k(u)\) are continuous, and hence bounded,
on \([s,t]\). Multiply \eqref{eq:exact_identities} by \(a_k(u)/k!\), sum over
\(k=0,\ldots,N\), and use
\eqref{eq:appell_expansion_p}. For every fixed \(n\geq2\), the
corresponding correction term becomes
\[
\begin{aligned}
\sum_{k=n-1}^N
\frac{a_k(u)}{k!}
\frac{k!}{(k-n+1)!}
(X_u^H)^{\dia(k-n+1)}
&=
p^{(n-1)}(X_u^H),
\end{aligned}
\]
by \eqref{eq:appell_expansion_derivative}. Since only finitely many
values of \(k\) occur and the coefficients \(a_k\) are bounded, the
resulting remainder still satisfies an estimate of order
\(|v-u|^{2H}\). Passing to Riemann sums therefore gives
\eqref{change_of_chaos_integrals}.
\end{proof}

The preceding argument also suggests a multidimensional change-of-chaos
formula. Let
\[
X_t=(X_t^1,\ldots,X_t^d),
\qquad
X_t^i=\I^2(f_t^i),
\qquad
f_t^i\in\mathcal H^{\odot2}.
\]
The associativity of \(\star\) removes the need to choose a
parenthesization for iterated contractions. The remaining difficulty is its noncommutativity: in general, the contraction associated with a path
depends on the order in which its vertices are visited. Let \(J\) be a multiset with elements in \(\{1,\ldots,d\}\), and let
\(\mathfrak S(J)\) denote the set of its distinct listings. For
\(|J|=n\), define
\[
f_t^J
:=
\operatorname{Sym}_2
\left(
\sum_{(j_1,\ldots,j_n)\in\mathfrak S(J)}
f_t^{j_1}\star\cdots\star f_t^{j_n}
\right),
\qquad
X_t^J:=\I^2(f_t^J).
\]
Since \(\mathfrak S(J)\) is invariant under reversal and transposition
reverses a \(\star\)-product, the kernel inside the preceding
symmetrization is already symmetric. Thus the symmetrization may
equivalently be omitted. With this convention, the natural multidimensional counterpart of
\eqref{change_of_chaos_integrals}, for a polynomial
\(p\colon\mathbb R^d\to\mathbb R\), is
\[
\int_s^t
\partial_i p(X_u)\dia \dif X_u^i
=
\int_s^t
\partial_ip(X_u)\dia_W \dif X_u^i
+
\sum_{|J|\geq2}
2^{|J|-1}
\int_s^t
\partial_J p(X_u)\dia_W \dif X_u^J,
\]
where the first index is summed according to the Einstein convention,
the second sum runs over multisets \(J\), and \(\partial_J\) is
independent of the chosen listing of \(J\). The reason multisets suffice in the gradient case is that the mixed derivatives of \(p\) are symmetric. After summing over the
possible components carrying the increment, the terms containing
exactly one increment kernel combine into the linear part of
\(g_{u,v}^J\). By multilinearity of \(\star\), the remaining terms
contain at least two increment kernels and are therefore of order
\(|v-u|^{2H}\), under the corresponding uniform \(H\)-Hölder
assumptions. We do not state this identity as a theorem, since a complete formulation would require precise multidimensional convergence assumptions and a careful bookkeeping of the ordered path contractions. For a general
non-gradient vector field, the component of the vector field carrying the increment remains distinguished. The correction terms should then be indexed by pointed words, rather than by multisets, since this distinguished index cannot in general be symmetrized with the derivative
indices.

\appendix
\section{Several complex variables}\label{app:proof}

The purpose of this appendix is to extend to the case some results on entire functions exponential type to the case of several complex variables; the techniques used in the proof are the same as in the one-dimensional case.

The growth of the derivatives of a generic holomorphic function $f$ can be estimated by an application of Cauchy's formula centred at the point $z$ with arbitrary positive radius. This estimate, however, generally grows factorially with the order of differentiation. The following lemma leverages the exponential type of $f$ to obtain a much sharper estimate. It works by optimising the choice of the radius based on the order of differentiation, cf.\ \cite[Lemma 2.1]{ButerinFreilingYurko2014} and \cite{boasEntire}. Recall that we write $\lesssim_a$ to mean boundedness up to a multiplicative constant which depends on $a$.

\begin{lemma}\label{lem:growth}
	Let $f \colon \mathbb C^d \to \mathbb C$ be entire. Assume that there exist
	$r \in [0,\infty)^d$ and $C>0$ such that
	\[
	|f(z)| \leq C \exp(r\cdot |z|), \qquad z\in \mathbb C^d .
	\]
	Then, for every polyradius $s>r$, there exists
	a constant $B_{C,r,s}>0$ such that, for every multiindex $K\in \mathbb N^d$ and
	every $z\in \mathbb C^d$,
	\[
	|\partial_K f(z)| \leq B_{C,r,s} s^K \exp(r\cdot |z|).
	\]
\end{lemma}

\begin{proof}
	Fix $z\in \mathbb C^d$, a multiindex $K\in\mathbb N^d$, and a polyradius
	$a\in(0,\infty)^d$. Applying Cauchy's formula in each coordinate, we have
	\[
	\partial_K f(z)
	=
	\frac{K!}{(2\pi i)^d}
	\int_{|\zeta_1-z_1|=a_1}\cdots
	\int_{|\zeta_d-z_d|=a_d}
	\frac{f(\zeta)}
	{\prod_{j=1}^d(\zeta_j-z_j)^{K_j+1}}
	\,\dif \zeta_d\cdots \dif \zeta_1 .
	\]
	Therefore, using the growth assumption on $f$
	\begin{align*}
		|\partial_K f(z)|
		&\leq
		K!a^{-K}
		\sup_{|\zeta-z|=a}|f(\zeta)| \lesssim_C \exp({r\cdot |z|}) K!a^{-K}\exp({r\cdot a}).
	\end{align*}
	Now choose $a$ depending on $K$ by
	\[
	a_j=
	\begin{cases}
		K_j/s_j, & K_j\geq 1,\\
		1, & K_j=0.
	\end{cases}
	\]
	Then, by Stirling's estimate applied factor-wise
	\begin{align*}
		K!a^{-K}\exp({r\cdot a})
		&=
		\prod_{K_j=0} \exp({r_j})
		\prod_{K_j\geq 1}
		K_j!\left(\frac{s_j}{K_j}\right)^{K_j}
		\exp\left(\frac{r_j}{s_j}K_j\right) \\
		&\lesssim \prod_{K_j=0} \exp({r_j})
		\prod_{K_j\geq 1}
		\sqrt{K_j} \bigg(\frac{K_j}{e}\bigg)^{K_j} \left(\frac{s_j}{K_j}\right)^{K_j}
		\exp\left(\frac{r_j}{s_j}K_j\right) \\
		&= \prod_{K_j=0} \exp({r_j})
		\prod_{K_j\geq 1}\sqrt{K_j} s_j^{K_j} \exp\!\left[K_j \left(\frac{r_j}{s_j} - 1\right)\right] \\
		&\lesssim_{r,s} s^K
	\end{align*}
	since $s > r$ implies $\sup_k\sqrt k \exp [ - k (1 - r_j/s_j)] < \infty$.
\end{proof}

\begin{proof}[Proof of \autoref{thm:martin}]
	
	We first prove the claim about holomorphy of $\EuScript{B}f$ on $E_r$. By \autoref{lem:growth} (suppressing dependence on $C,r$ which are fixed)
	\[
	|\partial_K f(0)| \lesssim_s s^K.
	\]
	Then for any $t > s$ and $|\theta| \geq t$ we have
	\[
	\bigg| \frac{\partial_K f(0)}{\theta^{K+1}} \bigg| \lesssim_s \frac{s^K}{t^{K+1}} \implies \sum_K \bigg| \frac{\partial_K f(0)}{\theta^{K+1}} \bigg| \lesssim_s \frac{1}{t^1} \sum_K \bigg(\frac{s}{t}\bigg)^K < \infty.
	\] 
	Since $s,t$ were arbitrary polyradii with $r < s < t$, the series defining $\EuScript{B}f$ then converges absolutely, uniformly on compacts in $E_r$ and by the Weierstrass theorem $\EuScript{B}f$ is holomorphic there.
	
	Now, substituting $\EuScript{B}f$ for its Laurent series and the Taylor series in $\theta = 0$ for $\theta \mapsto \exp({z \cdot \theta})$, both converging absolutely uniformly on the torus $|\theta| = \rho$ for any $\rho > r$, we obtain the Borel inversion formula:
	\begin{align*}
		\frac{1}{(2\pi i)^d} \int_{|\theta| = \rho} \exp({z \cdot \theta}) \EuScript{B}f(\theta) \dif \theta &= \sum_{H, K} \frac{\partial_K f(0)}{H!} z^H \frac{1}{(2\pi i)^d} \int_{|\theta|= \rho} \theta^{H - K - 1} \dif \theta = \sum_{K} \frac{\partial_K f(0)}{K!} z^K = f(z).
	\end{align*}
	Furthermore if $r < \rho < R$, $\EuScript{M}$ does not vanish on $\{|\theta| = \rho\}$ and
	\begin{align*}
		f(z) &= \frac{1}{(2\pi i)^d} \int_{|\theta| = \rho} \frac{\exp({z \cdot \theta})}{\EuScript{M}(\theta)} \EuScript{M}(\theta) \EuScript{B}f(\theta) \dif \theta \\
		&= \frac{1}{(2\pi i)^d} \int_{|\theta| = \rho} \sum_K \frac{z^{\dia K}}{K!} \theta^K \EuScript{M}(\theta) \EuScript{B}f(\theta) \dif \theta \\
		&= \sum_K  \frac{1}{(2\pi i)^d} \bigg[ \int_{|\theta| = \rho} \theta^K  \EuScript{M}(\theta) \EuScript{B}f(\theta) \dif \theta \bigg] \frac{z^{\dia K}}{K!},
	\end{align*}
	where we have exchanged integration and summation thanks to the fact that the Appell generating function \eqref{eq:wickGenerating} is the Taylor expansion in $\theta = 0$ of a function that is holomorphic on $D_R$ and thus converges uniformly on $\{|\theta| = \rho\} \Subset D_R$. This proves the claim regarding the expansion with $z$ fixed, and independence of the coefficients of $\rho$ is a consequence of the integrands being holomorphic in the polyannulus $\{ r < |\theta| < R\}$, by successive applications of the one-variable Cauchy theorem.
	
	We now proceed to estimate the coefficients and consequent uniform convergence. Fix $r < \rho < \sigma < R$. A Cauchy estimate on the polydisc $\{|\theta| \leq \sigma \}$ (similar to the one performed earlier for $\partial_K f(0)$) gives
	\[
	\bigg| \frac{z^{\dia K}}{K!} \bigg| \leq \max_{|\theta| = \sigma} \bigg| \frac{\exp({\theta \cdot z})}{\EuScript{M}(\theta)} \bigg| \sigma^{-K}.
	\]
	Then 
	\begin{align*}
		\bigg| L_K(f) \frac{z^{\dia K}}{K!} \bigg| \leq \rho^1 \max_{|\theta| = \rho} |\EuScript{M}(\theta) \EuScript{B}f(\theta)| \cdot \max_{|\theta| = \sigma} \bigg| \frac{\exp({\theta \cdot z})}{\EuScript{M}(\theta)}\bigg| \cdot \bigg( \frac{\rho}{\sigma} \bigg)^K .
	\end{align*}
	with $\rho^1 = \rho_1 \cdots \rho_d$ as usual coming from the volume of the torus (and we have temporarily suppressed the obvious dependence of $c_{\rho, \sigma}$ on $\mu, f$). Absolute convergence, uniformly on compacts follows by taking the supremum over $z$ in a compact $U$ and by summability of the geometric series. Absolute convergence in $L^1$ follows similarly, by restricting to $\Rd$ and integrating the exponential bound, given that $\sigma < R$.
	
	For the second statement, let $s_f,s_g,\sigma$ such that $s_f > r_f$, $s_g > r_g$ and $s_f + s_g < \sigma < R$. By the first part of the statement we have
	\[
	|a_I| \lesssim_{s_f} s^I_f, \qquad |b_J| \lesssim_{s_g} s^J_g, \qquad
	\bigg| \frac{z^{\dia K}}{K!} \bigg| \lesssim_\sigma \sigma^{-K}.
	\]
	Therefore
	\[
	\bigg|a_I b_J\frac{z^{\dia I \sqcup J}}{I! J!}\bigg| \lesssim_{s_f, s_g, \sigma, U} \binom{I + J}{I} \bigg( \frac{s_f}{\sigma} \bigg)^I \bigg( \frac{s_g}{\sigma} \bigg)^J,
	\]
	which is summable since
	\[
	\sum_{I, J}  \binom{I + J}{I} \bigg( \frac{s_f}{\sigma} \bigg)^I \bigg( \frac{s_g}{\sigma} \bigg)^J = \sum_K \sum_{I \leq K} \binom{K}{I} \bigg( \frac{s_f}{\sigma} \bigg)^I \bigg( \frac{s_g}{\sigma} \bigg)^{K - I} = \sum_K \bigg(\frac{s_f + s_g}{\sigma} \bigg)^K < \infty,
	\]
	and absolute convergence in the two modes follows.
\end{proof}

\bibliographystyle{alpha}
\bibliography{refs}

\end{document}